\newcommand{\PSL}{\mathrm{PSL}}
\newtheorem{defn}{Definition}[section]
\newtheorem{theorem}[defn]{Theorem}
\newtheorem{remark}[defn]{Remark}
\newtheorem{lemma}[defn]{Lemma}
\newtheorem{corollary}[defn]{Corollary}
\begin{document}

\title{Coprime subdegrees of twisted wreath permutation groups}
\author{Alexander Y. Chua, Michael Giudici and Luke Morgan}

\address{Centre for the Mathematics of Symmetry and Computation\\
School of Mathematics and Statistics\\
The University of Western Australia\\
35 Stirling Highway\\
Crawley\\
WA 6009\\
Australia} 

	\email{21506815@student.uwa.edu.au, michael.giudici@uwa.edu.au, luke.morgan@uwa.edu.au}

\begin{abstract} Dolfi, Guralnick, Praeger and Spiga asked if there exist infinitely many primitive groups of twisted wreath type with nontrivial coprime subdegrees. Here we settle this question in the affirmative. We construct infinite families of primitive twisted wreath permutation groups with nontrivial coprime subdegrees. In particular, we define a primitive twisted wreath group $G(m,q)$ constructed from the nonabelian simple group $\text{PSL}(2,q)$ and a primitive permutation group of diagonal type with socle $\PSL(2,q)^m$, and determine all values of $m$ and $q$ for which $G(m,q)$ has nontrivial coprime subdegrees. In the case where $m=2$ and $q\notin\{7,11,29\}$ we obtain a full classification of all pairs of nontrivial coprime subdegrees. \end{abstract}

\maketitle

\section{Introduction} 

If $G$ is a transitive permutation group acting on a finite set $\Omega$ and we fix some point $\alpha\in\Omega$, a \textit{subdegree} of $G$ relative to $\alpha$ is defined as the size of a $G_{\alpha}$-orbit. These are the sizes of the sets $\beta^{G_{\alpha}}$ where $\beta\in\Omega$, or equivalently, the values of $|G_{\alpha}:G_{\alpha}\cap G_{\beta}|$. The subdegree is said to be \textit{trivial} if it corresponds to the $G_{\alpha}$-orbit $\{\alpha\}$, and \textit{nontrivial} otherwise. If $G$ is primitive and not cyclic of prime order, then the only subdegree equal to $1$ is the trivial subdegree, so all nontrivial subdegrees are greater than $1$. The study of subdegrees is a classical topic in permutation group theory. Probably the most famous result is the verification of the Sims conjecture ~\cite{Sims} that bounds the order of point stabilisers in primitive groups in terms of their subdegrees. 

Primitive groups are classified into types by the O'Nan-Scott Theorem, using the subdivision in \cite{Inclusion}. The primitive groups of twisted wreath type (TW) are the most mysterious and commonly misunderstood. We refer the reader to \cite[Section 4.7]{PermDixon} and \cite{TWBaddeley} for detailed treatments and provide more information in Section \ref{TW}. This paper deals with subdegrees of twisted wreath groups, a topic that does not appear very often in the literature. The published results include a paper by Giudici, Li, Praeger, Seress and Trofimov ~\cite{GiudiciSub}, which proves bounds on the minimal subdegrees and explicitly constructs such a minimal $G_{\alpha}$-orbit. A result by Fawcett in her PhD thesis ~\cite[p. 59]{Fawcett} shows that if the point stabiliser $G_{\alpha}$ acts primitively on the set of simple direct factors of the socle, then there is a subdegree of size $|G_{\alpha}|$. 

The study of coprime subdegrees dates back to the work of Marie Weiss in 1935, who proved that if $G$ is primitive with coprime subdegrees $m$ and $n$, then $G$ has a subdegree dividing $mn$ that is greater than both $m$ and $n$. Moreover, if it has $k$ pairwise coprime subdegrees then it has rank at least $2^k$ (see \cite[p. 92-93]{SubdegreeResults}). The motivation behind this paper was a result of Dolfi, Guralnick, Praeger and Spiga ~\cite{CopSubPraeger} that is proven in \cite{proof}, stating that the maximal size of a set of pairwise coprime nontrivial subdegrees of a finite primitive permutation group is at most $2$. Dolfi et al. also showed that if a primitive permutation group has a pair of nontrivial coprime subdegrees, then its type is Almost Simple (AS), Product Action (PA) or Twisted Wreath (TW). For types AS and PA they constructed infinite families with nontrivial coprime subdegrees, but with type TW only one example is known. 

In this paper we construct a primitive TW group $G(m,q)$ determined from the nonabelian simple group $\text{PSL}(2,q)$ and a primitive permutation group of diagonal type with socle $\PSL(2,q)^m$. The group $G(2,7)$ is the example given in \cite{CopSubPraeger} of a primitive TW group with nontrivial coprime subdegrees. For the full definition of $G(m,q)$, see Remark \ref{define GMQ}. In Table \ref{list} we calculate a number of different subegrees of $G(m,q)$, from which we find infinitely many values of $m$ and $q$ for which $G(m,q)$ has nontrivial coprime subdegrees, as presented in Table \ref{coprime subdegrees}. We go further with our analysis and in Sections \ref{any m at least 2} and \ref{m=2} we show that these are the only such pairs. This gives the following main result:

\begin{theorem} The group $G(m,q)$ has nontrivial coprime subdegrees if and only if one of the following holds:
\begin{enumerate} 
\item $q\equiv 3\pmod{4}$ or $q=29$,
\item $q$ is even and $m\geqslant 3$.
\end{enumerate}
\end{theorem}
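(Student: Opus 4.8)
The plan is to translate the problem into one about orbits of a point stabiliser and then mine that action for divisibility information. By construction (Remark~\ref{define GMQ}), $G(m,q)=B\rtimes G_\alpha$, where the socle $B\cong T^N$ acts regularly with $T=\PSL(2,q)$, $N$ is the index in $G_\alpha$ of the stabiliser of a simple direct factor of $B$ (so $N=|T|^{m-1}$), and $G_\alpha$ is the prescribed primitive group of diagonal type with socle $T^m$. As in every twisted wreath group, $G_\alpha\cap G_\beta$ is the stabiliser in $G_\alpha$ of the element of $B$ corresponding to $\beta$, so the nontrivial subdegrees of $G(m,q)$ are precisely the lengths of the nontrivial $G_\alpha$-orbits on $B$. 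The whole argument then runs inside $G_\alpha$ acting on $B$, using standard data for $T$ and $\mathrm{Aut}(T)=\mathrm{P\Gamma L}(2,q)$: element orders, conjugacy class sizes, subgroup structure, the behaviour of the $2$-part of $|T|$ (governed by $q\bmod 4$), and the anomalous arithmetic of a few small $q$.

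\emph{A toolbox of subdegrees.} First I would write each $f\in B$ as a twisted function on $G_\alpha$ and classify it by its support as a subset of the $N$ components. Functions supported on a single component have stabiliser contained in the stabiliser $Q$ of that component in the diagonal action, and give subdegrees of the shape $N\cdot|c|$ with $c$ a conjugacy class of $\mathrm{Aut}(T)$; functions whose support is spread over a $Q$-invariant configuration of components are the ones whose $G_\alpha$-stabiliser acquires component-permuting elements, and these produce the genuinely smaller subdegrees that are \emph{not} divisible by $N$. Computing orbit lengths for well-chosen representatives of both shapes fills in Table~\ref{list}.

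\emph{The ``if'' direction.} For each $(m,q)$ with $q\equiv 3\pmod 4$, with $q=29$, or with $q$ even and $m\geqslant 3$, Table~\ref{list} already contains two coprime subdegrees, recorded in Table~\ref{coprime subdegrees}; exhibiting them is the ``if'' direction. Typically one takes one of the small subdegrees (whose prime divisors one controls tightly, using $4\mid q+1$ when $q\equiv 3\pmod 4$, and the extra components together with the larger top group $S_m$ when $q$ is even and $m\geqslant 3$) together with one large subdegree $N\cdot|c|$ chosen so that $|c|$ contributes no new common prime; the value $q=29$ is a finite check.

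\emph{The ``only if'' direction, and the main obstacle.} Suppose now that $(m,q)$ satisfies neither (1) nor (2), so that either $q\equiv 1\pmod 4$ with $q\ne 29$, or $q$ is even with $m=2$; the task is to show that \emph{every} pair of nontrivial subdegrees shares a prime. The approach is: (a) upgrade the toolbox into a classification — parametrise all $G_\alpha$-orbits on $B$ closely enough, using the support dichotomy above, the structure of point stabilisers in diagonal type groups, and the subgroup structure of $T$, so that every subdegree is pinned to one of a short list of explicit families; (b) exhibit, in each of the two cases, a single prime $r$ (one expects $r=2$ throughout: $4\mid q-1$ when $q$ is odd, the defining characteristic when $q$ is even) dividing every subdegree in every family, or — where that is too strong — show that any two subdegrees from different families still have a common divisor; (c) dispose of a bounded list of small $q$, together with the $m=2$ refinement, by direct computation. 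The delicate point is (a) combined with the small subdegrees: these are the only plausible partners for being coprime to an $N$-multiple, and controlling their prime divisors forces one to understand exactly which $Q$-invariant support patterns occur and what automorphisms of $T$ the stabiliser induces on each component — which is precisely where $q\bmod 4$ and the exception $q=29$ enter, and why $m=2$ (fewest components, least room, and the excluded values $q\in\{7,11,29\}$) is treated separately in Section~\ref{m=2} while the uniform bulk for general $m$ is handled in Section~\ref{any m at least 2}.
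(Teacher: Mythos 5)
Your proposal is a strategy sketch rather than a proof, and while it correctly locates the problem (the nontrivial subdegrees of $G(m,q)$ are the indices $|H:H_f|$ for $f$ a nonidentity element of the socle $N$, with $H=T\wr S_m$ the point stabiliser), it is missing the two ideas that actually make the ``only if'' direction work, and its stated mechanism for that direction fails.

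First, your step (b) proposes to find a single prime $r$ (``one expects $r=2$ throughout'') dividing every nontrivial subdegree when $q\equiv 1\pmod 4$, $q\neq 29$. This cannot work: for $q\equiv 1\pmod 4$ the group $G(m,q)$ has the odd subdegree $\bigl(\tfrac12 q(q+1)\bigr)^m$ (centraliser of an involution is $D_{q-1}$, Row~2 of Table~\ref{list}) alongside the even subdegree $(q+1)^m$, so no single prime divides everything. The fallback ``any two subdegrees from different families share a prime'' then requires step (a), a parametrisation of \emph{all} $H$-orbits on a set of size $|T|^{|T|^{m-1}}$, which is not feasible as stated and is not what the paper does. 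The paper's key reduction is Lemma~\ref{no T^m} together with Lemma~\ref{T^m not in X}: for $f\neq\mathrm{id}$ the stabiliser $H_f$ cannot contain $T^m$, hence $|H:H_f|$ is divisible by $|T:K|$ for some maximal $K\leqslant T$; coprime subdegrees therefore force a \emph{maximal coprime factorisation} $T=K_1K_2$, and the classification of these in \cite{CopSubPraeger} pins down $q$. Your outline contains no analogue of this reduction, and without it the $m\geqslant 3$ case does not close.

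Second, the exclusion of $q$ even with $m=2$ is an infinite family, so it cannot be ``disposed of by direct computation.'' The paper handles it by showing (via the functional equations of Lemmas~\ref{XY}--\ref{conditions g}) that no $f\in N$ has $H_f=P_1\wr S_2$ or $H_f=T\times P_1$, which by Lemma~\ref{nine cases} eliminates the only candidate configurations when $q$ is even. This non-existence argument --- proving certain subgroups of $H$ are \emph{not} stabilisers of any socle element --- is qualitatively different from anything in your plan. Finally, even the ``if'' direction needs more care than citing Table~\ref{list}: for $m=2$ and $q\equiv 3\pmod 4$ the value $(q+1)^2$ is not known to be a subdegree, and the paper instead pairs $\bigl(\tfrac12 q(q-1)\bigr)^2$ with a subdegree merely \emph{dividing} $2(q+1)^2$ (Corollary~\ref{P_1P_1 not maximal}), obtained from a non-maximal $D=P_1\times P_1$; and each row of Table~\ref{list} rests on specific facts about intersections of conjugate maximal subgroups of $\PSL(2,q)$ (Lemmas~\ref{C_2}--\ref{dihedral intersect C_2}) that your support heuristic does not supply.
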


Finally, in Section \ref{m=2} we analyse the $m=2$ case in more detail to determine all pairs of nontrivial coprime subdegrees, and we manage to do this for all $q\notin \{7,11,29\}$, given in Table \ref{final list}. For these three small cases we have an inclusion statement about the two-point stabiliser, which gives some information about the subdegree. Remarkably, we find that if $q\equiv 3\pmod{4}$ and $q>19$, then $G(2,q)$ has exactly one pair of nontrivial coprime subdegrees.   

\section{Twisted wreath groups}\label{TW}

We now describe the construction of the twisted wreath product as introduced by Neumann in \cite{NeumannTW}. Let $T$ and $H$ be arbitrary groups. For any subset $X$ of $H$, let $T^X$ denote the set of functions from $X$ to $T$, which is a group under pointwise multiplication. Let $\text{id}$ denote the function defined by $f(x)=1$ for all $x\in X$. It can be shown that $H$ acts as a group of automorphisms on $T^H$ by $f^x(z)=f(xz)$ for all $f\in T^H$ and $x,z\in H$. Now let $L$ be a subgroup of $H$ and $R$ be a set of left coset representatives of $L$ in $H$. Let $\phi:L\rightarrow\text{Aut}(T)$ be a homomorphism. Set
\[ N = \{f\in T^H \mid f(z\ell)=f(z)^{\phi(\ell)} \text{ for all } z\in H \text{ and } \ell\in L\}.\]
We can show that $N$ is a subgroup of $T^H$ and that $N\cong T^R$. Furthermore, the group $N$ is invariant under the action of $H$, so $H$ acts as a group of automorphisms on $N$. 

\begin{defn}\label{TWconstruction} We define the twisted wreath product determined from $(T,H,\phi)$ to be the group $G=N\rtimes H$. The group $G$ acts on $\Omega=N$ with $N$ acting by right multiplication and $H$ acting by automorphisms, that is, $\alpha^{nh}=(\alpha n)^h$ for all $\alpha\in\Omega, n\in N$ and $h\in H$. \end{defn}

\begin{lemma}\label{basic} The nontrivial subdegrees of $G$ are the values of $|H:H_f|$ for $f\in N\backslash\{\text{id}\}$. Also, no nontrivial subdegrees of $G$ are equal to $1$. \end{lemma}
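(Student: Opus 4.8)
The plan is to read off the subdegrees directly from Definition~\ref{TWconstruction} once a point stabiliser has been pinned down. I would use the base point $\alpha = \text{id} \in \Omega = N$. Writing a general element of $G = N \rtimes H$ uniquely as $nh$ with $n \in N$ and $h \in H$, the action of Definition~\ref{TWconstruction} gives $\alpha^{nh} = (\text{id}\cdot n)^{h} = n^{h}$, where $n^h$ now denotes the image of $n$ under the automorphism of $N$ induced by $h$. Since automorphisms of $N$ fix the identity function and are injective, $\alpha^{nh} = \text{id}$ if and only if $n = \text{id}$, so $G_\alpha$ is exactly the complement $H$ of $N$ in $G$.

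Next I would observe that the subdegrees of $G$ relative to $\alpha$ are the sizes of the $G_\alpha = H$-orbits on $\Omega = N$, where $H$ acts by $f \mapsto f^h$; the $H$-orbit of $f \in N$ has size $|H:H_f|$, with $H_f$ the stabiliser of $f$ in $H$. The orbit of the base point is $\{\text{id}\}$, which accounts for the trivial subdegree, and every other point of $N$ lies in an $H$-orbit not containing $\text{id}$. Conversely each nontrivial orbit is the orbit of some $f \neq \text{id}$ and so has size $|H:H_f|$ for that $f$. Hence the set of nontrivial subdegrees is exactly $\{\,|H:H_f| : f \in N \setminus \{\text{id}\}\,\}$, which is the first assertion.

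For the last sentence of the lemma I would show that $H_f = H$ forces $f = \text{id}$. Indeed $f^h = f$ for all $h \in H$ unwinds to $f(hz) = f(z)$ for all $h,z \in H$, so (taking $z = 1$) $f$ is the constant function with value $t := f(1)$; the membership condition $f(z\ell) = f(z)^{\phi(\ell)}$ defining $N$ then forces $t = t^{\phi(\ell)}$ for all $\ell \in L$, i.e.\ $t \in C_T(\phi(L))$. Under the standing hypotheses of the twisted wreath construction ($T$ nonabelian simple and $\phi(L) \geqslant \mathrm{Inn}(T)$) we get $C_T(\phi(L)) \leqslant C_T(\mathrm{Inn}(T)) = Z(T) = 1$, so $t = 1$ and $f = \text{id}$; equivalently, this is immediate from primitivity of $G$ together with the fact that $G$ is not regular, as $|G| = |N|\,|H| > |N| = |\Omega|$. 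None of this is hard: the computation of $G_\alpha$ is the one step worth spelling out, and the only genuine subtlety is that the final assertion really uses the primitivity hypotheses on $(T,H,\phi)$ and not the bare construction.
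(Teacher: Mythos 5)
Your proposal is correct, and for the main assertion it follows the paper's route exactly: verify that $G_{\text{id}}=H$ and read off the nontrivial subdegrees as the sizes $|H:H_f|$ of the $H$-orbits on $N\setminus\{\text{id}\}$. The only real divergence is in the final sentence. The paper disposes of it in one line by citing the general fact (stated in the introduction) that a primitive group which is not cyclic of prime order has no nontrivial subdegree equal to $1$; you instead prove directly that $H_f=H$ forces $f$ to be constant and then, via the defining relation $f(z\ell)=f(z)^{\phi(\ell)}$ and $\mathrm{Inn}(T)\leqslant\mathrm{Im}\,\phi$, that the constant lies in $Z(T)=1$. This is the same computation the paper uses later inside Lemma~\ref{fn} to show the function $g$ there is nonconstant, so your version is a legitimate and slightly more self-contained alternative; your closing remark is also well taken, since neither the primitivity of $G$ nor the condition $\mathrm{Inn}(T)\leqslant\mathrm{Im}\,\phi$ is part of the bare construction in Definition~\ref{TWconstruction} (and the second assertion genuinely fails without some such hypothesis, e.g.\ when $\phi$ is trivial and $L=H$), although both hold in every case the paper actually considers.
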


\begin{proof} We can verify that $G_{\text{id}}=H$, so the nontrivial subdegrees of $G$ are of the form $|G_{\text{id}}:G_{\text{id}}\cap G_f|=|H:H_f|$ for some $f\in N\backslash\{\text{id}\}$. Since $G$ is not cyclic of prime order, no nontrivial subdegrees of $G$ are equal to $1$. \end{proof} 

The following result from \cite[Lemma 4.7A]{PermDixon} gives a set of sufficient conditions for a twisted wreath product to be primitive. 

\begin{theorem}\label{TWsufficient} Let $T$ be a finite nonabelian simple group, and suppose $H$ is a primitive permutation group with point stabiliser $L$. Suppose that the group of inner automorphisms of $T$ is contained in the image of $\phi$, but $\text{Im }\phi$ is not a homomorphic image of $H$. Then the twisted wreath product determined from $(T,H,\phi)$ is a primitive group with regular socle $N$, and $N\cong T^m$ where $m = |H:L|$. \end{theorem}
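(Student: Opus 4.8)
The plan is to analyse the action of $G=N\rtimes H$ on $\Omega=N$ given in Definition~\ref{TWconstruction}. By Lemma~\ref{basic} we have $G_{\text{id}}=H$, and $N$ acts regularly by construction, so what needs proving is that $N=\mathrm{soc}(G)$ and that $G$ is primitive. The isomorphism $N\cong T^m$ with $m=|H:L|$ is immediate from the fact (recorded before Definition~\ref{TWconstruction}) that $N\cong T^R$ together with $|R|=|H:L|=m$; write $N=T_1\times\dots\times T_m$ with each $T_i\cong T$. The first thing I would record is that the permutation action of $H$ on $\{T_1,\dots,T_m\}$ arising from conjugation on $N$ is equivalent to the action of $H$ on $H/L$: the factor attached to a coset $xL$ is the group of functions in $N$ supported on $xL$, and $h\in H$ sends it to the factor supported on $h^{-1}xL$. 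Thus $H$ acts transitively — indeed primitively and faithfully — on $\{T_1,\dots,T_m\}$, the stabiliser of $T_1$ is a conjugate of $L$, and it acts on $T_1\cong T$ via $\phi$. Note also that the hypothesis forces $L\ne H$ (if $L=H$ then $\mathrm{Im}\,\phi$ is visibly a homomorphic image of $H$), so $m\geqslant 2$.

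For the socle: a normal subgroup of $G$ contained in $N$ is an $H$-invariant normal subgroup of $T_1\times\dots\times T_m$, hence (as $T$ is nonabelian simple) a subproduct $\prod_{i\in S}T_i$ with $S$ an $H$-invariant subset of $\{1,\dots,m\}$; transitivity of $H$ on the factors forces $S=\varnothing$ or $S=\{1,\dots,m\}$, so $N$ is a minimal normal subgroup of $G$. Next $C_G(N)=1$: one has $Z(N)=1$ since $T$ is nonabelian, an element of $H$ centralising $N$ must fix every $T_i$ and so lie in $\mathrm{core}_H(L)=1$, and a short check with the semidirect structure then gives $C_G(N)=1$. A minimal normal subgroup with trivial centraliser is the unique minimal normal subgroup, so $N=\mathrm{soc}(G)$, and it is regular.

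It remains to prove $G$ primitive, i.e.\ that $H=G_{\text{id}}$ is maximal in $G$. By Dedekind's modular law, any $M$ with $H\leqslant M\leqslant G$ has the form $M=(M\cap N)\rtimes H$ with $M\cap N$ an $H$-invariant subgroup of $N$, so it is enough to show the only $H$-invariant subgroups of $N$ are $\{\text{id}\}$ and $N$. Let $K\neq\{\text{id}\}$ be $H$-invariant. Its projection to $T_1$ is invariant under the stabiliser of $T_1$, which acts via $\phi$; as $\mathrm{Inn}(T)\leqslant\mathrm{Im}\,\phi$ this projection is normal in $T_1$, hence equals $T_1$ (it is nontrivial because, by transitivity and $H$-invariance, $K$ projects nontrivially onto one factor hence onto all). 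So $K$ is subdirect in $T_1\times\dots\times T_m$, and by the structure theorem for subdirect subgroups of a direct power of a nonabelian simple group (L. Scott's lemma) $K$ is a direct product of full diagonal subgroups across a partition $\mathcal{P}$ of $\{1,\dots,m\}$ that is intrinsic to $K$, hence $H$-invariant; since $H$ acts primitively on $\{1,\dots,m\}$, either $\mathcal{P}$ is the partition into singletons, so $K=N$, or $\mathcal{P}$ has a single block, so $K$ is a full diagonal subgroup of $N$.

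The remaining, and crucial, point is to rule out $K$ being an $H$-invariant full diagonal subgroup. Identifying $K\cong T$ via any one coordinate projection, the conjugation action of $H$ on $K$ gives a homomorphism $\tau\colon H\to\mathrm{Aut}(T)$; using the $\phi$-equivariance that defines $N$ one checks $\tau$ restricts to $\phi$ on the stabiliser $L$ of that coordinate, so $\mathrm{Im}\,\phi=\tau(L)\leqslant\tau(H)$. Faithfulness of the primitive action now does the work: a nontrivial normal subgroup of $H$ cannot lie inside $L$, so (since $\mathrm{Inn}(T)\leqslant\mathrm{Im}\,\phi=\tau(L)$) $\ker\tau$ cannot be trivial and cannot be contained in $L$, whence $H=L\ker\tau$ by maximality of $L$ and therefore $\tau(H)=\tau(L)=\mathrm{Im}\,\phi$. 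Then $\mathrm{Im}\,\phi\cong H/\ker\tau$ is a homomorphic image of $H$, contrary to hypothesis. Hence no proper nontrivial $H$-invariant subgroup of $N$ exists, $H$ is maximal in $G$, and $G$ is primitive. I expect this last paragraph — cleanly extracting the extension $\tau$ from a diagonal subgroup, and then exploiting primitivity (not merely transitivity) of $H$ to pull $\mathrm{Im}\,\tau$ down to $\mathrm{Im}\,\phi$ — to be the main obstacle; the rest is bookkeeping with the twisted wreath construction.
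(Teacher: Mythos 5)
Your proof is correct, but it is worth noting at the outset that the paper does not prove this statement at all: it is quoted as \cite[Lemma 4.7A]{PermDixon}, so there is no in-paper argument to compare against. What you have written is essentially a self-contained reconstruction of the standard Dixon--Mortimer/Baddeley proof: reduce primitivity to the nonexistence of proper nontrivial $H$-invariant subgroups of $N$ via Dedekind, use $\mathrm{Inn}(T)\leqslant\mathrm{Im}\,\phi$ to force any such subgroup to be subdirect, apply Scott's lemma together with primitivity of $H$ on the $m$ factors to reduce to a full diagonal, and then use the hypothesis that $\mathrm{Im}\,\phi$ is not a homomorphic image of $H$ to kill the diagonal case. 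The socle and regularity claims are handled correctly ($N$ minimal normal, $\mathbf{C}_G(N)=1$, hence $N=\mathrm{soc}(G)$; this also gives faithfulness of the action, which you might state explicitly since ``primitive permutation group'' presupposes it).

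The one place where you are too terse is the assertion that $\ker\tau$ cannot be trivial. As written, ``since $\mathrm{Inn}(T)\leqslant\tau(L)$'' does not by itself yield this; the missing link is the subgroup $P=\tau^{-1}\bigl(\mathrm{Inn}(T)\bigr)$, which is normal in $H$ (because $\mathrm{Inn}(T)\trianglelefteq\mathrm{Aut}(T)$) and nontrivial (because $1\neq\mathrm{Inn}(T)\leqslant\tau(L)$), and which would be contained in $L=\tau^{-1}(\tau(L))$ if $\tau$ were injective, contradicting $\mathrm{core}_H(L)=1$. In fact you can streamline the whole final step by working with $P$ directly and never mentioning $\ker\tau$: since $P\trianglelefteq H$ is nontrivial and $\mathrm{core}_H(L)=1$, maximality of $L$ gives $H=PL$, and $\tau(P)\leqslant\mathrm{Inn}(T)\leqslant\tau(L)$ then forces $\tau(H)=\tau(L)=\mathrm{Im}\,\phi$, exhibiting $\mathrm{Im}\,\phi$ as a homomorphic image of $H$ and giving the desired contradiction. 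With that one sentence supplied, the argument is complete.
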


We will deal with a class of primitive TW groups constructed from a group of diagonal type. 

\begin{lemma}\label{will be used later} Let $T$ be a finite nonabelian simple group, let $H=T\wr S_m$ and let $L=\{(x,\ldots, x)\sigma\mid x\in T,\sigma\in S_m\}$. Define $\phi:L\rightarrow \text{Aut}(T)$ by setting $\phi((x, \ldots, x)\sigma)=i_x$ for all $x\in T$ and $\sigma\in S_m$, where $i_x$ denotes the automorphism of $T$ induced by conjugation by $x$. Then the construction in Definition \ref{TWconstruction} yields a primitive TW permutation group $G(m,T)$ with socle isomorphic to $T^{|T|^{m-1}}$ and point stabiliser isomorphic to $T\wr S_m$. \end{lemma}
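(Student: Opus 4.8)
The plan is to apply Theorem~\ref{TWsufficient} with the group $H = T \wr S_m$ acting in its natural primitive diagonal action, and the subgroup $L = \{(x,\ldots,x)\sigma \mid x \in T, \sigma \in S_m\}$ as the point stabiliser. There are essentially three things to check: that $H$ is primitive with point stabiliser $L$; that $\phi$ as defined is a well-defined homomorphism whose image contains $\mathrm{Inn}(T)$; and that $\mathrm{Im}\,\phi$ is not a homomorphic image of $H$. Then the conclusion of Theorem~\ref{TWsufficient} gives primitivity and a regular socle $N \cong T^m$ where here $m$ is the index $|H:L|$, which I must compute to be $|T|^{m-1}$ (the clash of notation is unfortunate but harmless). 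Finally the point stabiliser of $G(m,T)$ is $H = T \wr S_m$ by Lemma~\ref{basic}'s proof, which notes $G_{\mathrm{id}} = H$.

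First I would verify that $H = T\wr S_m$ acting on the right cosets of $L$ is primitive of diagonal type: this is the standard fact that $L$ is a maximal subgroup of $T\wr S_m$ when $T$ is nonabelian simple, with $|H:L| = |T|^m \cdot m! / (|T| \cdot m!) = |T|^{m-1}$, which identifies the socle rank. I would cite the standard description of diagonal type primitive groups (e.g.\ from \cite{Inclusion} or \cite{PermDixon}) rather than reprove maximality of $L$. Next, $\phi$: the map $(x,\ldots,x)\sigma \mapsto i_x$ is well-defined since the element $(x,\ldots,x)\sigma$ determines $x$ uniquely (look at any coordinate), and it is a homomorphism because $((x,\ldots,x)\sigma)((y,\ldots,y)\tau) = (xy',\ldots)\sigma\tau$ where $y' $ is a coordinate of $(y,\ldots,y)^{\sigma^{-1}} = (y,\ldots,y)$, so the product again has all coordinates equal, to $xy$, and $i_{xy} = i_x i_y$. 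Its image is precisely $\mathrm{Inn}(T) \cong T$, so certainly $\mathrm{Inn}(T) \subseteq \mathrm{Im}\,\phi$.

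The main obstacle is the last condition: showing $\mathrm{Im}\,\phi \cong T$ is not a homomorphic image of $H = T\wr S_m$. The normal subgroups of $T\wr S_m$ are (for $T$ nonabelian simple, $m \geq 2$) $1$, the diagonal-type normal subgroups inside the base group $T^m$ (namely $T^m$ itself and subdirect products fixed setwise by $S_m$ — but subdirect products of $T^m$ normalised by $S_m$ that are proper are trivial since $T$ is simple and $S_m$ is transitive on the factors, so only $1$ and $T^m$ arise inside the base), together with $A_m \ltimes $ (base), $S_m \ltimes$(base-type), etc. Concretely, the proper nontrivial homomorphic images of $T \wr S_m$ are: $S_m$, $A_m \times C_2$-type quotients for small $m$, and quotients of $S_m$ — none of which is isomorphic to the nonabelian simple group $T$ unless $T$ itself were a section of $S_m$ of the right shape, which it is not since $|T| \geq 60 > $ any relevant obstruction; more carefully, any quotient of $T\wr S_m$ is a quotient of $T^m \rtimes S_m$ by a normal subgroup, and since $T^m$ has no $S_m$-invariant proper nontrivial normal subgroup, the quotient is either a quotient of $S_m$ (hence solvable-by-nothing, not simple nonabelian of order $\geq 60$ unless $m\geq 5$ gives $A_m$ — but $A_m \not\cong \mathrm{PSL}(2,q)$ in our eventual application, and in general $A_m \ne T$ can be arranged, though the lemma as stated for arbitrary $T$ needs $T \not\cong A_m$...) — I would handle this by noting that if $T\wr S_m \twoheadrightarrow T$ then the kernel $K$ meets $T^m$ in an $S_m$-invariant normal subgroup, forcing $K \cap T^m \in \{1, T^m\}$; the case $K\cap T^m = T^m$ gives $T$ as a quotient of $S_m$, impossible by order/simplicity for the $T$ under consideration, and $K \cap T^m = 1$ forces $|K| \leq |S_m| < |T^m|/|T|$... requiring a short order/structure argument. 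This is the step I expect to write most carefully, likely invoking that $\mathrm{Im}\,\phi = \mathrm{Inn}(T) \cong T$ has trivial centre and is perfect while any quotient of $T\wr S_m$ that is perfect and centreless must contain the image of $T^m$, forcing it to be a quotient of $T^m$, i.e.\ $T^k$ for some $k$, and $T^k \cong T$ iff $k=1$, but such a quotient cannot be $S_m$-invariantly a single factor — contradiction.
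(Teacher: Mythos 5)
Your proposal is correct and follows essentially the same route as the paper: the paper's proof simply applies Theorem~\ref{TWsufficient} after asserting that $H$ acts primitively on the cosets of $L$, that $\mathrm{Im}\,\phi=\mathrm{Inn}(T)$, and that $\mathrm{Inn}(T)$ is not a homomorphic image of $H$, together with the index computation $|H:L|=|T|^{m-1}$ --- exactly the checklist you carry out, just with the details filled in. One correction to the worry in your last paragraph: $A_m$ is a normal subgroup of $S_m$, not a quotient (the proper quotients of $S_m$ are quotients of $S_m/A_m\cong C_2$ for $m\geqslant 5$ and are solvable for $m\leqslant 4$), so no nonabelian simple $T$ is ever a quotient of $S_m$, no exceptional case $T\cong A_m$ arises for the lemma as stated, and your fallback argument --- a surjection $H\twoheadrightarrow T$ has kernel $K$ with $K\cap T^m\in\{1,T^m\}$, the first case being impossible by order (or because $K$ would centralise $T^m$) and the second reducing to a quotient of $S_m$ --- closes the gap cleanly.
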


\begin{proof} The group $H$ acts primitively on the set of right cosets of $L$. Note that $\text{Inn}(T)=\text{Im }\phi$ and that $\text{Inn}(T)$ is not a homomorphic image of $H$. All the conditions of Theorem \ref{TWsufficient} have been satisfied, so $G(m,T)$ is indeed a primitive group of type TW. Since $|H:L|=\frac{|T|^m\cdot m!}{|T|\cdot m!}=|T|^{m-1}$, the socle is of the form $T^{|T|^{m-1}}$. \end{proof}

\begin{remark}\label{define GMQ} We define $G(m,q)=G(m,\PSL(2,q))$ and note that $G(2,7)$ is the primitive TW group in \cite[p. 12--14]{CopSubPraeger} with nontrivial coprime subdegrees. \end{remark} 

Throughout, we will let $H,L$ and $\phi$ be as defined in Lemma \ref{will be used later}. Let 
\[ N=\{f\in T^H \mid f(z\ell)=f(z)^{\phi(\ell)} \text{ for all } z\in H \text{ and } \ell\in L\}\]
be the set of functions that $G(m,T)$ acts on. 

We now construct some $g\in N$ which is very similar to the function used by Dolfi et al.~\cite{CopSubPraeger} and Giudici et al.~\cite{GiudiciSub}.

\begin{lemma}\label{fn} Let $D$ be a subgroup of $H$. Suppose there exists $t\in H$ such that $(\eta, \ldots, \eta)\sigma\in Z(D^t \cap L)$ with $\eta\neq 1$. Then $g\in T^H$ defined by
\begin{equation}\label{weird fn} g(z) = \begin{cases} \eta^{\phi(\ell)} & \text{if } z=dt\ell, \text{ for some } d\in D \text{ and } \ell\in L, \\ 1 & \text{if } z\in H\backslash DtL \end{cases} \end{equation}
is well-defined. Moreover, $g$ is a nonconstant function, $g\in N$ and $D\leqslant\textbf{C}_H(g)$. \end{lemma}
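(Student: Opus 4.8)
The plan is to verify the four assertions in order, the crux being well-definedness, since the formula \eqref{weird fn} defines $g(z)$ via a representation $z = dt\ell$ that is typically not unique. So the first step is: given $z \in DtL$, suppose $z = d_1 t \ell_1 = d_2 t \ell_2$ with $d_i \in D$ and $\ell_i \in L$; I must show $\eta^{\phi(\ell_1)} = \eta^{\phi(\ell_2)}$. From the two expressions, $d_2^{-1}d_1 = t\ell_2\ell_1^{-1}t^{-1}$, so setting $x = \ell_1\ell_2^{-1} \in L$ we get $t x^{-1} t^{-1} = d_1^{-1}d_2 \in D$, hence $t x t^{-1} \in D$ as well, so $txt^{-1} \in D^{t^{-1}}\cap\dots$ — more precisely $x \in t^{-1}Dt \cap L = D^{t}\cap L$ in the convention $D^t = t^{-1}Dt$. (I will match whichever conjugation convention the paper uses so that the hypothesis "$(\eta,\dots,\eta)\sigma\in Z(D^t\cap L)$" is the relevant one.) Since $(\eta,\dots,\eta)\sigma$ is \emph{central} in $D^t\cap L$, it commutes with $x$, and I want to conclude $\eta^{\phi(x)} = \eta$. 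Here I use the explicit form of $\phi$ from Lemma \ref{will be used later}: writing $x = (y,\dots,y)\tau$, we have $\phi(x) = i_y$, and $x$ commuting with $(\eta,\dots,\eta)\sigma$ forces, coordinate-wise, $y$ to commute with $\eta$ in $T$ (comparing the first coordinates after the $S_m$-parts are accounted for), i.e. $i_y$ fixes $\eta$, i.e. $\eta^{\phi(x)} = \eta$. Then $\eta^{\phi(\ell_1)} = \eta^{\phi(x)\phi(\ell_2)} = \eta^{\phi(\ell_2)}$ as required, so $g$ is well-defined.

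Next, $g$ is nonconstant: $g(t) = \eta^{\phi(1)} = \eta \neq 1$ (taking $d=1$, $\ell=1$), while there exists $z \in H\setminus DtL$ with $g(z) = 1$ — such $z$ exists because $DtL \neq H$, which holds since $g$ being constant would contradict $\eta\neq 1$; alternatively one simply notes $g(\mathrm{id}\text{-type element})$... more cleanly: if $DtL = H$ then pick any $\ell \in L$ with $\eta^{\phi(\ell)} \neq \eta$ (such $\ell$ exists unless $\eta$ is central in $T$, impossible as $T$ is simple nonabelian and $\eta\ne1$), giving two representations of a suitable $z$ with different $g$-values, contradicting the already-established well-definedness; so $DtL\subsetneq H$ and $g$ takes the value $1$. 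Hence $g$ is nonconstant.

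Then $g \in N$: I must check $g(z\ell) = g(z)^{\phi(\ell)}$ for all $z\in H$, $\ell\in L$. If $z \in DtL$, write $z = dt\ell_0$; then $z\ell = dt(\ell_0\ell)$ with $\ell_0\ell \in L$, so $g(z\ell) = \eta^{\phi(\ell_0\ell)} = (\eta^{\phi(\ell_0)})^{\phi(\ell)} = g(z)^{\phi(\ell)}$, using that $\phi$ is a homomorphism. If $z \notin DtL$, then $z\ell \notin DtL$ too (as $DtL$ is a union of right $L$-cosets), so both sides are $1 = 1^{\phi(\ell)}$. Finally, $D \leqslant \mathbf{C}_H(g)$: recall the $H$-action is $g^h(z) = g(hz)$, so I need $g(dz) = g(z)$ for all $d\in D$ and $z\in H$. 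If $z \in DtL$, say $z = d't\ell$, then $dz = (dd')t\ell$ with $dd'\in D$, so $g(dz) = \eta^{\phi(\ell)} = g(z)$; if $z\notin DtL$ then $dz\notin DtL$ (since $DtL$ is a union of left $D$-cosets — wait, it's $D\cdot(tL)$, so indeed left-$D$-invariant), giving $g(dz) = 1 = g(z)$. This completes all four claims. I expect the well-definedness step — specifically the passage from "$x$ commutes with the central element $(\eta,\dots,\eta)\sigma$" to "$\eta^{\phi(x)}=\eta$" via the coordinate-wise analysis of commuting elements in the base group of $T\wr S_m$ — to be the only part requiring genuine care; everything else is bookkeeping about left/right coset structure.
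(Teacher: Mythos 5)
Your proposal is correct and follows essentially the same route as the paper's proof: well-definedness is reduced to showing that the element $x=\ell_1\ell_2^{-1}\in D^t\cap L$ satisfies $\eta^{\phi(x)}=\eta$, which follows by writing $x=(y,\ldots,y)\tau$ and comparing coordinates in the commutation relation with the central element $(\eta,\ldots,\eta)\sigma$; the $N$-membership and $D\leqslant\textbf{C}_H(g)$ checks are the same coset bookkeeping the paper does. The one wobble is in the nonconstancy step: your detour through ``$DtL\subsetneq H$'' is neither needed nor actually established by the argument you give --- the elements $t$ and $t\ell$ are two \emph{different} points of $H$ taking different $g$-values, which contradicts constancy but not well-definedness. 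The ingredient you already have, namely an $\ell\in L$ with $\eta^{\phi(\ell)}\neq\eta$ (which exists since $\mathrm{Inn}(T)\leqslant\mathrm{Im}\,\phi$ and $Z(T)=1$), directly gives $g(t\ell)\neq g(t)$, and this is precisely the paper's argument in contrapositive form.
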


\begin{proof} Firstly, we show that $g$ is well-defined. If $z=d_1t\ell_1=d_2t\ell_2$ for $d_1,d_2\in D$ and $\ell_1,\ell_2\in L$, then $\ell_2\ell_1^{-1}=t^{-1}d_2^{-1}d_1t\in D^t\cap L$. Hence $\ell_2=u\ell_1$ with $u\in D^t\cap L$. Let $u=(\rho, \ldots, \rho)\tau$. Since $(\eta, \ldots, \eta)\sigma\in Z(D^t\cap L)$, it follows that $(\rho\eta, \ldots, \rho\eta)\tau\sigma=(\eta\rho, \ldots, \eta\rho)\sigma\tau$. In particular, $\rho\eta=\eta\rho$ and thus $\eta^{\phi(u)}=\eta^{i_{\rho}}=\eta$. Hence
\[ \eta^{\phi(\ell_1)}=\eta^{\phi(u)\phi(\ell_1)}=\eta^{\phi(u\ell_1)}=\eta^{\phi(\ell_2)} \]
and so $g(z)$ does not depend on the representation $z=d_it\ell_i$ of $z$. 

Next, we show that $g\in N$. If $z=dt\ell$ for $d\in D$ and $\ell\in L$, then 
\[ g(z\ell_1)=g(dt\ell\ell_1)=\eta^{\phi(\ell\ell_1)}=\eta^{\phi(\ell)\phi(\ell_1)}=g(z)^{\phi(\ell_1)} \]
for each $\ell_1\in L$. If $z\notin DtL$, then $z\ell \notin DtL$, so $g(z\ell)=1=g(z)^{\phi(\ell)}$. Hence $g\in N$. Now suppose for a contradiction that $g$ is constant. Then $\eta^{\phi(\ell)}=\eta^{\phi(1)}=\eta$ for all $\ell\in L$, and since $\text{Inn}(T)\leqslant \text{Im }\phi$ we have $\eta\in Z(T)=1$, a contradiction. Thus $g$ is nonconstant. 

Finally, we show that for each $d\in D$ and $z\in H$, we have $g^d(z)=g(dz)=g(z)$, by considering the $z\in DtL$ and $z\notin DtL$ cases. If $z=d_1t\ell_1$ for $d_1\in D$ and $\ell_1\in L$, then $g(dz)=g(dd_1t\ell_1)=\eta^{\phi(\ell_1)}=g(d_1t\ell_1)=g(z)$. If $z\notin DtL$, then it also follows that $dz\notin DtL$, so $g(dz)=g(z)=1$. Thus, $D$ centralises $g$. \end{proof}

\begin{lemma}\label{|H:D|} Let $D$ be a maximal subgroup of $H$. If there exists $t\in H$ such that $Z(D^t\cap L)$ contains an element $(\eta, \ldots, \eta)\sigma$ with $\eta\neq 1$, then $|H:D|$ is a subdegree of $G(m,T)$. \end{lemma}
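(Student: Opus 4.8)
The plan is to feed the hypotheses of this lemma directly into Lemma~\ref{fn} and then use the maximality of $D$ to determine the stabiliser of the resulting function exactly. The point is that the data assumed here --- a subgroup $D\leqslant H$ and an element $t\in H$ with $(\eta,\ldots,\eta)\sigma\in Z(D^t\cap L)$ for some $\eta\neq 1$ --- are precisely what Lemma~\ref{fn} requires. So Lemma~\ref{fn} supplies the function $g$ of~\eqref{weird fn}, which is well-defined, lies in $N$, is nonconstant, and satisfies $D\leqslant\textbf{C}_H(g)$.

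Next I would pass to the language of subdegrees. Since $g$ is nonconstant, $g\neq\text{id}$, so Lemma~\ref{basic} gives that $|H:H_g|$ is a nontrivial subdegree of $G(m,T)$, where $H_g=\{h\in H\mid g^h=g\}$ is the stabiliser of the point $g\in\Omega=N$ under the action of $H$ by automorphisms. Since $g^h(z)=g(hz)$, this stabiliser is exactly $\textbf{C}_H(g)=\{h\in H\mid g(hz)=g(z)\text{ for all }z\in H\}$, and by Lemma~\ref{fn} it contains $D$.

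It remains to prove $H_g=D$, and this is where the maximality of $D$ enters. From $D\leqslant H_g\leqslant H$ we get $H_g\in\{D,H\}$. To rule out $H_g=H$, observe that $H_g=H$ would mean $g(hz)=g(z)$ for all $h,z\in H$; taking $z=1$ forces $g$ to be the constant function with value $g(1)$, contradicting that $g$ is nonconstant. Therefore $H_g=D$, so $|H:D|=|H:H_g|$ is a subdegree of $G(m,T)$.

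There is no real obstacle here: the substance is entirely in Lemma~\ref{fn}, and the only step needing a moment's thought is the elementary fact that an $H$-invariant function on $H$ is constant --- which is exactly what allows maximality of $D$ to pin $H_g$ down to $D$ rather than $H$. The only bookkeeping to be careful with is the identification of the point stabiliser $H_g$ with $\textbf{C}_H(g)$, which is immediate from the description of the $G$-action in Definition~\ref{TWconstruction}.
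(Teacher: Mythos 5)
Your proof is correct and follows essentially the same route as the paper: apply Lemma~\ref{fn} to obtain $g$ with $D\leqslant\textbf{C}_H(g)$, then use maximality of $D$ together with the nonconstancy of $g$ to force $\textbf{C}_H(g)=D$. The extra care you take in identifying the point stabiliser $H_g$ with $\textbf{C}_H(g)$ is a harmless elaboration of a step the paper leaves implicit.
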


\begin{proof} Define $g$ as in Equation \eqref{weird fn}. Then by Lemma \ref{fn}, $D\leqslant \textbf{C}_H(g)$. Since $D$ is maximal in $H$, we conclude that $\textbf{C}_H(g)=D$ or $H$. If $\textbf{C}_H(g)=H$, then for each $h\in H$, we have that $g(hz)=g^h(z)=g(z)$, and thus $g$ is a constant function, contradicting $\eta\neq 1$ and Lemma \ref{fn}. So $\textbf{C}_H(g)=D$. Hence $|H:\textbf{C}_H(g)|=|H:D|$ is a subdegree of $G(m,T)$. \end{proof}

\begin{corollary}\label{max} Let $K$ be a maximal subgroup of $T$ and let $D=K\wr S_m$. If there exists $t\in H$ such that $Z(D^t\cap L)$ contains an element $(\eta,\ldots,\eta)\sigma$ with $\eta\neq 1$, then $|T:K|^m$ is a subdegree of $G(m,T)$. \end{corollary}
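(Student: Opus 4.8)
The plan is to deduce this directly from Lemma \ref{|H:D|} applied with $D = K \wr S_m$, so that the only real work is to check that $D$ is a maximal subgroup of $H = T \wr S_m$ and then to compute the index $|H:D|$. Once maximality is established, the hypothesis of the corollary is exactly the hypothesis of Lemma \ref{|H:D|} for this choice of $D$, so that lemma gives that $|H:D|$ is a subdegree of $G(m,T)$; and since $|H| = |T|^m m!$ and $|D| = |K|^m m!$, we obtain $|H:D| = (|T|/|K|)^m = |T:K|^m$, as required.

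The substantive step is therefore the maximality of $D = K^m \rtimes S_m$ in $H = T^m \rtimes S_m$. First I would take a subgroup $M$ with $D \leqslant M \leqslant H$. Since $S_m \leqslant D \leqslant M$, the projection $H \to S_m$ with kernel $T^m$ restricts to a surjection on $M$ with a complement $S_m$, so $M = (M \cap T^m) \rtimes S_m$ and it suffices to show that $P := M \cap T^m$ equals $K^m$ or $T^m$. Now $P$ is a subgroup of $T^m$ with $K^m \leqslant P$, and it is normalised by $S_m$ (as $S_m \leqslant M$ normalises $M \cap T^m$); in particular the coordinate projections $\pi_i(P) \leqslant T$ are permuted by $S_m$ and hence all coincide, say with $P_1$, and $K \leqslant P_1$. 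By maximality of $K$ in $T$, either $P_1 = K$ or $P_1 = T$.

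If $P_1 = K$ then $P \leqslant K \times \cdots \times K = K^m$, whence $P = K^m$. If $P_1 = T$, I would instead consider $N_1 = \{a \in T : (a,1,\ldots,1) \in P\}$, which contains $K$ because $K^m \leqslant P$. Given any $b \in T$, surjectivity of $\pi_1|_P$ yields $p = (b, y_2, \ldots, y_m) \in P$, and then $p^{-1}(a,1,\ldots,1)p = (b^{-1}ab, 1, \ldots, 1) \in P$ for all $a \in N_1$; hence $N_1 \trianglelefteq T$. Since $T$ is simple and $K$ is proper, $N_1 = T$, i.e.\ $T \times 1^{m-1} \leqslant P$. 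Conjugating by $S_m$ puts every coordinate copy of $T$ into $P$, so $P = T^m$. This proves $D$ is maximal in $H$, and the corollary follows.

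The main obstacle is precisely this maximality claim, and the point on which it turns is that a proper subgroup $K$ of the simple group $T$ is never normal in $T$: this is what rules out the ``mixed'' possibility $N_1 = K$ and forces $P$ to be all of $T^m$ as soon as one projection of $P$ is onto. Everything else — the splitting $M = (M\cap T^m)\rtimes S_m$, the index computation, and the invocation of Lemma \ref{|H:D|} — is routine.
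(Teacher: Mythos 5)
Your proposal is correct and follows essentially the same route as the paper: both reduce the statement to Lemma \ref{|H:D|} after establishing that $D=K\wr S_m$ is maximal in $H=T\wr S_m$, and then compute $|H:D|=|T:K|^m$. The only difference is that the paper cites Corollary 1.5A and Lemma 2.7A of Dixon--Mortimer for this maximality, whereas you prove it directly; your argument for that step is sound.
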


\begin{proof} By Corollary 1.5A and Lemma 2.7A in \cite{PermDixon}, it follows that $D$ is maximal in $H$. The result now follows from applying Lemma \ref{|H:D|}. \end{proof}

\begin{corollary}\label{t=1} Let $K$ be maximal in $T$ with $Z(K)\neq 1$. Then $|T:K|^m$ is a subdegree of $G(m,T)$. \end{corollary}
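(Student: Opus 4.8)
The plan is to deduce this straight from Corollary \ref{max} by verifying its hypothesis with the simplest possible conjugating element, namely $t=1$ (so that $D^t=D$). Setting $D=K\wr S_m$, the whole task reduces to exhibiting a suitable nontrivial central element of $D\cap L$.

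First I would compute $D\cap L$ explicitly. Since $D=K\wr S_m=\{(x_1,\ldots,x_m)\sigma\mid x_i\in K,\ \sigma\in S_m\}$ and $L=\{(x,\ldots,x)\sigma\mid x\in T,\ \sigma\in S_m\}$, an element lies in the intersection exactly when all of its base components coincide with a common element of $K$; that is,
\[ D\cap L=\{(x,\ldots,x)\sigma\mid x\in K,\ \sigma\in S_m\}. \]
Because the constant tuple $(y,\ldots,y)$ is fixed by every coordinate permutation, the multiplication convention in $T\wr S_m$ gives $(x,\ldots,x)\sigma\cdot(y,\ldots,y)\tau=(xy,\ldots,xy)\sigma\tau$, so the map $(x,\ldots,x)\sigma\mapsto(x,\sigma)$ is an isomorphism $D\cap L\to K\times S_m$. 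Hence $Z(D\cap L)\cong Z(K)\times Z(S_m)$, and in particular $Z(D\cap L)$ contains every element $(\eta,\ldots,\eta)$ with $\eta\in Z(K)$.

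Next, using the hypothesis $Z(K)\neq 1$, I would choose $\eta\in Z(K)\setminus\{1\}$ and $\sigma=1$, so that $(\eta,\ldots,\eta)\sigma=(\eta,\ldots,\eta)$ is an element of $Z(D^t\cap L)$ with $\eta\neq 1$, where $t=1$. This is precisely the hypothesis of Corollary \ref{max}, which then yields that $|T:K|^m$ is a subdegree of $G(m,T)$, as required. There is essentially no obstacle here; the only point needing care is the identification of $D\cap L$ together with the observation that conjugation by coordinate permutations acts trivially on the diagonal tuple $(\eta,\ldots,\eta)$, which is immediate from the structure of the wreath product.
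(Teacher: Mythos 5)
Your proof is correct and follows the same route as the paper: take $D=K\wr S_m$, $t=1$, identify $D\cap L\cong K\times S_m$ so that a nontrivial $\eta\in Z(K)$ gives the required central element $(\eta,\ldots,\eta)$ of $Z(D\cap L)$, and apply Corollary \ref{max}. The extra detail you supply (the explicit description of $D\cap L$ and the check that permutations act trivially on diagonal tuples) is exactly what the paper leaves implicit.
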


\begin{proof} Let $D=K\wr S_m$. Then $D\cap L\cong K\times S_m$, so $Z(D\cap L)\cong Z(K)\times Z(S_m)$. Since $Z(K)\neq 1$ we can apply Corollary \ref{max} with $t=1$, and thus $|T:K|^m$ is a subdegree of $G(m,T)$. \end{proof}

\begin{corollary}\label{int with conj has centre} Suppose $m\geqslant 3$. Let $K$ be a maximal subgroup of $T$ such that there exists $s\in T\backslash K$ with $Z(K\cap K^s)\neq 1$. Then $|T:K|^m$ is a subdegree of $G(m,T)$. \end{corollary}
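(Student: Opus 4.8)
The goal is to produce, from the hypothesis on $K \cap K^s$, an element $t \in H$ and a central element $(\eta,\ldots,\eta)\sigma$ of $Z(D^t \cap L)$ with $\eta \neq 1$, where $D = K \wr S_m$, so that Corollary \ref{max} applies. The natural candidate for $t$ is an element of $H = T \wr S_m$ whose top component is a transposition, say $t = (1,\ldots,1)(1\,2)$, or more precisely $t = (s,1,\ldots,1)$ or a product of such a translation with a transposition, chosen so that $D^t \cap L$ picks up the intersection $K \cap K^s$ in a diagonal position while leaving the remaining $m-2$ coordinates as a full copy of $K$ permuted by $S_{m-2}$. The plan is: (i) compute $D^t$ explicitly for this choice of $t$; (ii) intersect with $L = \{(x,\ldots,x)\tau \mid x \in T, \tau \in S_m\}$ and verify that $D^t \cap L$ contains a subgroup of the form $(K \cap K^s) \times (\text{something acting on the last } m-2 \text{ coordinates})$, where the first factor is the "diagonal" entry of a constant tuple supported appropriately; (iii) pick $1 \neq \eta \in Z(K \cap K^s)$ and check that the constant tuple $(\eta,\ldots,\eta)$ (together with the identity permutation) is centralised by all of $D^t \cap L$, i.e. lies in $Z(D^t \cap L)$.

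More concretely, I would take $t = (s^{-1},1,1,\ldots,1)\in H$ (pure base-group element) and also throw in a transposition so that the two coordinates involved in the conjugation get "tied together" by elements of $L$: the point of $m \geq 3$ is that we need at least two coordinates to form the conjugate-pair $K \cap K^s$ via a transposition, plus we still want the resulting group inside $L$, which forces tuples to be constant. So a cleaner choice is $t = (1,s,1,\ldots,1)\cdot(1\,2)$ or similar. Then an element $(x,\ldots,x)\tau \in L$ lies in $D^t$ iff its $t$-conjugate lies in $D = K \wr S_m$; working this out, the constraint on the base part becomes $x \in K$ in coordinates $3,\ldots,m$, and a twisted constraint in coordinates $1,2$ that reads $x \in K$ and $x \in K^s$ simultaneously (after accounting for how $(1\,2)$ and the translation interact), giving $x \in K \cap K^s$ — and $\tau$ ranges over $S_{\{3,\ldots,m\}}$ together with possibly the transposition $(1\,2)$. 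Then $Z(D^t \cap L)$ contains $(\eta,\ldots,\eta)$ with $\eta \in Z(K\cap K^s)\setminus\{1\}$ because a constant tuple commutes with any permutation, and $\eta$ being central in $K \cap K^s$ handles the base-group part. Finally apply Corollary \ref{max}.

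The main obstacle I anticipate is bookkeeping: getting the conjugation action of the transposition-times-translation element $t$ correct, and verifying that the resulting group $D^t\cap L$ is exactly (or at least contains) a group in which the constant tuple $(\eta,\ldots,\eta)$ with a trivial permutation part is genuinely central — one has to make sure no element of $D^t \cap L$ has a permutation part that moves coordinate $1$ to a coordinate outside $\{1,2\}$ in a way that conjugates $\eta$ out of $Z(K \cap K^s)$; since $\eta$ sits in all coordinates equally this should be automatic, but the argument that $\eta^{\phi(\ell)} \neq \eta$ fails to be an obstruction (i.e. that $\phi$ restricted here is by inner automorphisms of $\eta$'s centraliser) needs care. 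A secondary subtlety is confirming that the hypothesis $s \in T \setminus K$ (rather than $s\in K$) is actually used — it guarantees $K^s \neq K$ so that $K\cap K^s$ is a proper subgroup, but we only need $Z(K\cap K^s)\neq 1$, which is given, so this is just a sanity check. Once the structure of $D^t\cap L$ is pinned down the conclusion is immediate from Corollary \ref{max}, so essentially all the work is in steps (i)–(ii).
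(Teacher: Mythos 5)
Your overall strategy is the paper's: take $D=K\wr S_m$, find $t\in H$ so that $Z(D^t\cap L)$ contains a suitable $(\eta,\ldots,\eta)\sigma$, and invoke Corollary \ref{max}. However, the proposal stops exactly where the actual content of the proof lies, and it misattributes the roles of the two hypotheses. First, the choice of $t$: a pure base-group element such as $t=(1,\ldots,1,s)$ suffices; no transposition is needed, and appending one only replaces $D^t\cap L$ by a conjugate under an element of $L$, so it buys nothing. Second, and more importantly, the hypotheses $m\geqslant 3$ and $s\in T\backslash K$ are not ``sanity checks'': they are precisely what pins down the permutation parts occurring in $D^t\cap L$. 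Writing a general element of $D^t$ as $(1,\ldots,1,s^{-1})(k_1,\ldots,k_m)(1,\ldots,s,\ldots,1)\sigma$ with $s$ in position $m^{\sigma^{-1}}$, membership in $L$ forces the base part to be constant; if $\sigma$ did not fix $m$, then because $m\geqslant 3$ some coordinate of the base part equals $k_j\in K$ while another equals $k_i s$, and constancy would force $s\in K$, a contradiction. Only after this does one get $D^t\cap L\cong (K\cap K^s)\times S_{m-1}$ and hence a nontrivial central element $(\eta,\ldots,\eta)$ with $\eta\in Z(K\cap K^s)$. Your stated reason for $m\geqslant 3$ (needing two coordinates to ``form the conjugate pair via a transposition'') is not the actual mechanism, and your remark that $s\notin K$ is only needed to make $K\cap K^s$ proper is wrong: it is needed to exclude the bad permutation parts.

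That this step cannot be waved away as bookkeeping is shown by the $m=2$ case: there the same $t$ gives a genuinely different group (compare Lemma \ref{dihedral P_1}, where $D^t\cap L$ is dihedral rather than a direct product $(P_1\cap P_1^s)\times S_1$), and the corollary as stated would fail without the separate treatment in Corollary \ref{C_2 int}. So the missing item is concrete: carry out the computation of $D^t\cap L$ for your chosen $t$, using $m\geqslant 3$ and $s\notin K$ to show every permutation part fixes the distinguished coordinate, before extracting the central element and applying Corollary \ref{max}.
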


\begin{proof} Let $D=K\wr S_m$ and set $t=(1,\ldots,1,s)$. Then
\begin{align*} D^t\cap L&=\{(1,\ldots,1,s^{-1})(k_1,\ldots,k_m)\sigma(1,\ldots,1,s)\mid k_i\in K,\sigma\in S_m\}\cap L\\
&=\{(1,\ldots,1,s^{-1})(k_1,\ldots,k_m)\underbrace{(1,\ldots,s,\ldots,1)}_{\text{$s$ in the $m^{\sigma^{-1}}$th component}}\sigma\mid k_i\in K,\sigma\in S_m\}\cap L.\end{align*}
If $m^{\sigma^{-1}}\neq m$ then since $m\geqslant 3$ one component of the product will be of the form $k_is$, while another will be of the form $k_j$. But $k_is\neq k_j$ since $s\notin K$, so the product is not in $L$. So
\begin{align*} D^t\cap L&=\{(1,\ldots,1,s^{-1})(k_1,\ldots,k_m)(1,\ldots,1,s)\sigma\mid k_i\in K, m^{\sigma}=m\}\cap L\\
&=\{(k_1, \ldots, k_{m-1}, k_m^s)\sigma\mid k_i\in K, m^{\sigma}=m\}\cap L\\
&\cong (K\cap K^s)\times S_{m-1}. \end{align*}
Hence
\[ Z(D^t\cap L)\cong Z(K\cap K^s)\times Z(S_{m-1}) \]
and since $Z(K\cap K^s)\neq 1$, we can apply Corollary \ref{max} to conclude that $|T:K|^m$ is a subdegree of $G(m,T)$. \end{proof} 

\begin{corollary}\label{C_2 int} Let $K$ be maximal in $T$ and assume that there exists $s\in T\backslash K$ such that $K\cap K^s\cong C_2$. Then $|T:K|^m$ is a subdegree of $G(m,T)$. \end{corollary}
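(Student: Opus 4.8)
The plan is to split into the cases $m\geqslant 3$ and $m=2$ (we may assume $m\geqslant 2$, since otherwise $G(m,T)$ is not a primitive TW group). When $m\geqslant 3$ there is nothing new to do: since $K\cap K^s\cong C_2$ we have $Z(K\cap K^s)=K\cap K^s\neq 1$, so Corollary \ref{int with conj has centre} applies verbatim and gives that $|T:K|^m$ is a subdegree of $G(m,T)$. Thus the real content is the case $m=2$, where Corollary \ref{int with conj has centre} is unavailable because its proof uses $m\geqslant 3$ to discard the permutations not fixing the $m$th coordinate.

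For $m=2$ I would take $D=K\wr S_2$ and the conjugating element $t=(1,s)$, and compute $D^t\cap L$ directly, in the same spirit as the proof of Corollary \ref{int with conj has centre}. Writing a typical element of $D$ as $(k_1,k_2)$ or $(k_1,k_2)(12)$ with $k_1,k_2\in K$, conjugation by $t$ produces $(k_1,s^{-1}k_2s)$ and $(k_1s,s^{-1}k_2)(12)$ respectively. Intersecting with $L=\{(x,x)\sigma\mid x\in T,\ \sigma\in S_2\}$, the elements of $D^t\cap L$ with trivial $S_2$-component are exactly $\{(\eta,\eta)\mid \eta\in K\cap K^s\}$, a subgroup of order $2$, while the elements with $S_2$-component $(12)$ are exactly $\{(c,c)(12)\mid c\in Ks\cap s^{-1}K\}$.

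The key point is then a counting bound: $|Ks\cap s^{-1}K|\leqslant |K\cap K^s|=2$. Indeed, if $c_0\in Ks\cap s^{-1}K$ then for any $c$ in this set, comparing $cs^{-1},c_0s^{-1}\in K$ gives $cc_0^{-1}\in K$, and comparing $sc,sc_0\in K$ gives $cc_0^{-1}\in s^{-1}Ks=K^s$; hence $c\mapsto cc_0^{-1}$ embeds $Ks\cap s^{-1}K$ into $K\cap K^s$. Therefore $|D^t\cap L|\in\{2,4\}$, so $D^t\cap L$ is abelian and equals its own centre. Taking $\eta$ to be the involution generating $K\cap K^s$, the element $(\eta,\eta)=(\eta,\eta)\cdot 1$ is a nontrivial element of $Z(D^t\cap L)$, and Corollary \ref{max} then yields that $|T:K|^2$ is a subdegree of $G(2,T)$, completing the proof.

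The only slightly delicate step is identifying $D^t\cap L$ and proving the bound $|Ks\cap s^{-1}K|\leqslant 2$ in the $m=2$ case; once this is in hand, everything is forced, because a group of order at most $4$ is automatically abelian — which is precisely why the lone hypothesis $K\cap K^s\cong C_2$ is enough when $m=2$, with no further assumption needed.
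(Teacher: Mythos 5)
Your proof is correct, and for $m\geqslant 3$ it coincides with the paper's (both reduce to Corollary \ref{int with conj has centre} via the observation that a group of order $2$ is abelian). For $m=2$ your overall strategy is also the paper's --- take $D=K\wr S_2$ and $t=(1,s)$, exhibit a nontrivial element $(\eta,\eta)\in Z(D^t\cap L)$, and invoke Corollary \ref{max} --- but the mechanism for centrality is genuinely different. The paper sets $M=\{(x,x)\mid x\in T\}$, notes $D^t\cap M\cong K\cap K^s\cong C_2$, and uses $M\vartriangleleft L$ to get $D^t\cap M\vartriangleleft D^t\cap L$; since a normal subgroup of order $2$ is automatically central, this finishes without ever examining the part of $D^t\cap L$ outside $T^2$. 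You instead compute $D^t\cap L$ in full, identifying its elements with nontrivial $S_2$-component as $\{(c,c)(12)\mid c\in Ks\cap s^{-1}K\}$, and prove the bound $|Ks\cap s^{-1}K|\leqslant |K\cap K^s|=2$ via the injection $c\mapsto cc_0^{-1}$ (both halves of which check out, with $K^s=s^{-1}Ks$ as in the paper's conventions), so that $|D^t\cap L|\leqslant 4$ and the group is abelian. Your route costs a short extra computation but buys an exact description of $D^t\cap L$; the paper's route is shorter and entirely sidesteps the question of whether $D^t\cap L$ meets the nontrivial coset of $T^2$ at all. Either argument is a complete and correct proof.
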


\begin{proof} If $m\geqslant 3$, we are immediately done by Corollary \ref{int with conj has centre} as $K\cap K^s$ is abelian. So suppose $m=2$ and let $D=K\wr S_2$. Let $M=\{(x,x)\mid x\in T\}$ and set $t=(1,s)$. Then $D^t\cap M\cong K\cap K^s\cong C_2$. Since $M\vartriangleleft L$ it follows that $D^t\cap M\vartriangleleft D^t\cap L$ and so $D^t\cap M\leqslant Z(D^t\cap L)$. So by Corollary \ref{max}, we see that $|T:K|^2$ is a subdegree of $G(m,T)$. \end{proof}

The following lemma is inspired by the construction of the subdegree $24^2$ in the group $G(2,7)$~\cite{CopSubPraeger}.  

\begin{lemma}\label{centraliser} Let $\gamma$ be a nontrivial element of $T$. Then $\left (\frac{|T|}{|\textbf{C}_T(\gamma)|}\right )^m=|\gamma^T|^m$ is a subdegree of $G(m,T)$.\end{lemma}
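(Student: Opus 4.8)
The plan is to apply the machinery developed in the previous lemmas and corollaries, specifically Lemma~\ref{|H:D|} (or directly Lemma~\ref{fn}), to a carefully chosen subgroup $D$ of $H$ whose index is $|\gamma^T|^m$. The natural candidate is $D = \mathbf{C}_T(\gamma) \wr S_m$, since $|H:D| = \left(|T|/|\mathbf{C}_T(\gamma)|\right)^m = |\gamma^T|^m$. The issue is that $\mathbf{C}_T(\gamma)$ need not be a maximal subgroup of $T$, so Corollary~\ref{max} does not apply directly; instead I would work with Lemma~\ref{fn} and argue separately that the centraliser $\mathbf{C}_H(g)$ of the constructed function is exactly $D$, not something larger.

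First I would take $t = 1$ and set $D = C \wr S_m$ where $C = \mathbf{C}_T(\gamma)$. Then $D \cap L \cong C \times S_m$ (the diagonal copies of $C$ together with all of $S_m$), so $Z(D\cap L) \cong Z(C) \times Z(S_m)$, and since $\gamma \in Z(C)$ is nontrivial, the hypothesis of Lemma~\ref{fn} is satisfied with $\eta = \gamma$ and $\sigma$ arbitrary (say $\sigma = 1$, or a transposition when $m=2$ to hit $Z(S_m)$; either way $(\gamma,\ldots,\gamma)\cdot 1 \in Z(D\cap L)$ suffices). Lemma~\ref{fn} then produces a nonconstant $g \in N$ with $D \leqslant \mathbf{C}_H(g)$, which already shows $|H:\mathbf{C}_H(g)|$ divides $|H:D| = |\gamma^T|^m$. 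The remaining task is to show $\mathbf{C}_H(g) = D$ exactly, i.e.\ that no element outside $D$ centralises $g$.

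The main obstacle is this reverse inclusion $\mathbf{C}_H(g) \leqslant D$, since we cannot invoke maximality of $D$. The approach is to compute directly: if $h \in \mathbf{C}_H(g)$, then $g(hz) = g(z)$ for all $z \in H$. Taking $z \in DtL = DL$ forces $h$ to preserve the coset space structure, so $h \in DL = D$ (using $L \leqslant D$ here, since $D = C\wr S_m$ contains the diagonal subgroup and all of $S_m$, hence $D \supseteq L$ — actually $L \leqslant D$ precisely because diagonal elements $(x,\ldots,x)\sigma$ need $x \in C$; so this needs care). More carefully: from $g^h = g$ and the support of $g$ being the single double coset $DL = D$, one deduces $Dh = D$, so $h \in D$; then within $D$, writing $h = (c_1,\ldots,c_m)\pi$ with $c_i \in C$, the condition $g^h = g$ combined with the explicit formula~\eqref{weird fn} for $g$ on $D$ (namely $g(d\ell) = \gamma^{\phi(\ell)}$) pins down exactly which such $h$ work — and since $\gamma$ is central in $C$, conjugation by any element of $C$ in each coordinate fixes $\gamma$, so in fact all of $D$ centralises $g$, giving $\mathbf{C}_H(g) = D$. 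Thus $|H:\mathbf{C}_H(g)| = |H:D| = |\gamma^T|^m$ is a subdegree of $G(m,T)$ by Lemma~\ref{basic} applied to $g$.
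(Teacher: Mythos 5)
Your overall strategy is the same as the paper's: take $D=\mathbf{C}_T(\gamma)\wr S_m$, $t=1$, build $g$ via Lemma~\ref{fn}, and then prove $\mathbf{C}_H(g)=D$ directly rather than by maximality. But the key step --- the reverse inclusion $\mathbf{C}_H(g)\leqslant D$ --- contains a genuine error. You assert that the support of $g$ is ``the single double coset $DL=D$.'' This is false: $L=\{(x,\ldots,x)\sigma\mid x\in T,\ \sigma\in S_m\}$ ranges over \emph{all} $x\in T$, whereas $D=\mathbf{C}_T(\gamma)\wr S_m$ only contains diagonal parts with $x\in\mathbf{C}_T(\gamma)$. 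Since $\gamma\neq 1$ and $Z(T)=1$, we have $\mathbf{C}_T(\gamma)<T$, so $L\not\leqslant D$ and $DL\supsetneq D$. Consequently the support argument only yields $h_1\in DL$ for $h_1\in\mathbf{C}_H(g)$, not $h_1\in D$; and your closing computation (that every $(c_1,\ldots,c_m)\pi\in D$ centralises $g$ because $\gamma$ is central in $\mathbf{C}_T(\gamma)$) merely re-proves the forward inclusion $D\leqslant\mathbf{C}_H(g)$, which Lemma~\ref{fn} already gave you. You never rule out a centralising element lying in $DL\setminus D$.

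The missing step is exactly what the paper supplies. Having shown $h_1\in DL$ (by evaluating: if $h_1\notin DL$ then $\gamma=g(1)=g^{h_1}(1)=g(h_1)=1$, a contradiction), write $h_1=d\ell$ with $d\in D$, $\ell\in L$. Since $D\leqslant\mathbf{C}_H(g)$, also $\ell=d^{-1}h_1\in\mathbf{C}_H(g)$. Writing $\ell=(x,\ldots,x)\sigma$ and evaluating again, $\gamma=g(1)=g^{\ell}(1)=g(\ell)=\gamma^{i_x}$, so $x\in\mathbf{C}_T(\gamma)$, hence $\ell\in D$ and $h_1\in D$. Without this evaluation at $\ell$, the proof is incomplete.
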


\begin{proof} Let $D=\textbf{C}_T(\gamma)\wr S_m$. Define $h\in T^H$ by 
\begin{equation} h(z) = \begin{cases} \gamma^{\phi(\ell)} & \text{if } z=d\ell, \text{ for some } d\in D \text{ and } \ell\in L, \\ 1 & \text{if } z\in H\backslash DL. \end{cases} \end{equation}
That is, $h$ is defined as in Equation \eqref{weird fn} with $t=1$ and $(\gamma,\ldots, \gamma)$ being a nontrivial element in $Z(D\cap L)$. Thus Lemma $\ref{fn}$ implies that $h$ is well-defined, $h\in N$ and $D\leqslant\textbf{C}_H(h)$. We cannot use maximality as before to conclude that $D=\textbf{C}_H(h)$, but we can prove this another way. Let $h_1\in \textbf{C}_H(h)$ and suppose that $h_1\notin DL$. Then $\gamma=h(1)=h^{h_1}(1)=h(h_1)=1$, a contradiction. Thus $h_1=d\ell$ for some $d\in D$ and $\ell\in L$. Since $D\leqslant\textbf{C}_H(h)$, we have $\ell=d^{-1}h_1\in \textbf{C}_H(h)$. Let $\phi(\ell)=i_x$, that is, $\ell=(x, \ldots, x)\sigma$ for some $x\in T$ and $\sigma\in S_m$. Then $\gamma=h(1)=h^{\ell}(1)=h(\ell)=\gamma^{i_x}$, so $x\in \textbf{C}_T(\gamma)$. Hence $\ell\in D$ and $h_1\in D$, so $D=\textbf{C}_H(h)$. Thus $|H:\textbf{C}_H(h)|=|H:D|=\left (\frac{|T|}{|\textbf{C}_T(\gamma)|}\right )^m=|\gamma^T|^m$ is a subdegree of $G(m,T)$. \end{proof}

We now explain how the above results could be used to construct infinite families of primitive TW groups with nontrivial coprime subdegrees. If $G$ is a finite group with subgroups $A$ and $B$, we say $G=AB$ is a \textit{coprime factorisation} if $|G:A|$ and $|G:B|$ are coprime. If $A$ and $B$ are maximal in $G$, we say $G=AB$ is a \textit{maximal coprime factorisation}. 

\begin{theorem}\label{has centre} Let $T=AB$ be a maximal coprime factorisation of the finite nonabelian simple group $T$, and suppose $m\geqslant 3$. If there exist $s,t\in T$ such that $Z(A\cap A^s)\neq 1$ and $Z(B\cap B^t)\neq 1$, then the group $G(m,T)$ has the nontrivial coprime subdegrees $|T:A|^m$ and $|T:B|^m$. \end{theorem}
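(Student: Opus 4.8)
The plan is to prove that $|T:A|^m$ and $|T:B|^m$ are each subdegrees of $G(m,T)$, and then to read off that they are nontrivial and coprime.

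I would first establish that $|T:A|^m$ is a subdegree. Since $A$ is maximal in $T$, two cases arise. If $s\in T\backslash A$, then the hypothesis $Z(A\cap A^s)\neq 1$ together with $m\geqslant 3$ is exactly what is needed to apply Corollary \ref{int with conj has centre} with $K=A$, giving that $|T:A|^m$ is a subdegree of $G(m,T)$. If instead $s\in A$, then $A^s=A$, so $Z(A\cap A^s)=Z(A)\neq 1$, and Corollary \ref{t=1} applied with $K=A$ yields the same conclusion. Either way, $|T:A|^m$ is a subdegree of $G(m,T)$. The identical argument, applied to $B$ and $t$ in place of $A$ and $s$, shows that $|T:B|^m$ is also a subdegree of $G(m,T)$.

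It then remains to check the two required properties. As $A$ and $B$ are maximal, hence proper, subgroups of $T$, we have $|T:A|>1$ and $|T:B|>1$, so $|T:A|^m$ and $|T:B|^m$ are both greater than $1$ and are therefore nontrivial subdegrees (by Lemma \ref{basic}, the trivial subdegree being the only one equal to $1$). Since $T=AB$ is a coprime factorisation, $\gcd(|T:A|,|T:B|)=1$, whence $\gcd(|T:A|^m,|T:B|^m)=1$, and we are done. There is no serious obstacle here: the statement is essentially an assembly of corollaries already proved, and the only point needing a moment's care is the degenerate possibility that $s$ (or $t$) lies in $A$ (resp.\ $B$), which is covered by falling back on Corollary \ref{t=1}.
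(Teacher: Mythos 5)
Your proof is correct and takes essentially the same approach as the paper: the paper's proof likewise splits into the case $s\in A$ (resp.\ $t\in B$), handled by Corollary \ref{t=1}, and the case $s\in T\backslash A$ (resp.\ $t\in T\backslash B$), handled by Corollary \ref{int with conj has centre}. You merely spell out the routine nontriviality and coprimality checks that the paper leaves implicit.
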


\begin{proof} This follows from Corollary \ref{t=1} when $s$ or $t$ lie in $K$ and Corollary \ref{int with conj has centre} when $s$ or $t$ lie in $T\backslash K$. \end{proof}

\begin{remark} Theorem \ref{has centre} is a very powerful result as \cite{CopSubPraeger} contains a list of all the maximal coprime factorisations of finite nonabelian simple groups. \end{remark}

\begin{remark} Let $T$ be a finite nonabelian simple group. If $T=AB$ is a maximal coprime factorisation such that both $A$ and $B$ have nontrivial centres, then we will have an infinite family of primitive TW groups with nontrivial coprime subdegrees (this is a special case of Theorem \ref{has centre} with $s=t=1$). It is a conjecture of Szep that $T$ can never be written as $AB$ for any (not necessarily maximal) subgroups $A$ and $B$ of $T$ with nontrivial centre. This conjecture was proven in \cite{Szep}, meaning that this idea cannot be used to construct an infinite family. \end{remark}

\begin{remark} By Lemma \ref{centraliser}, if there exist two nontrivial conjugacy classes of coprime size, then we will have an infinite family of primitive TW groups with nontrivial coprime subdegrees. However, as observed in \cite{Szep}, this is not possible and is an immediate corollary of the Szep Conjecture. \end{remark}

\section{Results about $\text{PSL}(2,q)$}

We begin with the following standard lemma.  

\begin{lemma}\label{double counting} Let $K$ be a maximal subgroup of a simple group $T$ and let $R$ be a subgroup of $K$. Let $x$ be the number of conjugates of $R$ in $T$ that are contained in $K$ and let $y$ be the number of conjugates of $K$ in $T$ whose intersection with $K$ contains $R$. Then
\[ y=\frac{x\cdot |N_T(R)|}{|K|}. \] \end{lemma}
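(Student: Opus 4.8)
The plan is to count, in two ways, the size of the set
\[
S=\{(R',K'):R'\text{ is a conjugate of }R,\ K'\text{ is a conjugate of }K,\ R'\leqslant K'\text{ and }R'\leqslant K\}.
\]
Since all conjugates of $R$ are $T$-conjugate and $R$ is not normal in $T$ (as $T$ is simple and nonabelian), the conjugates of $R$ contained in $K$ are precisely those counted by $x$; pairing each such $R'$ with the conjugates $K'$ of $K$ that contain it will give one count of $|S|$, and pairing each conjugate $K'$ of $K$ (with $K'\cap K\geqslant R$) with the conjugates of $R$ lying in $K'\cap K$ will give the other. The main point to get right is that the "inner" counts in each direction are independent of which outer object was chosen, so that $|S|$ factors as a product.

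First I would fix one conjugate $R'$ of $R$ with $R'\leqslant K$ and count the conjugates $K'$ of $K$ with $R'\leqslant K'$. The conjugates of $K$ form a single $T$-orbit, and $R'\leqslant K^g$ iff $g^{-1}R'g\leqslant K$; the number of such cosets $gN_T(K)=gK$ (using $N_T(K)=K$ by maximality of $K$ in the simple group $T$) equals the number of conjugates of $K$ containing $R'$. The key observation is that this number does not depend on the choice of $R'$ among the $T$-conjugates of $R$: if $R''=R'^h$ is another, then conjugation by $h$ is a bijection between $\{K':R'\leqslant K'\}$ and $\{K':R''\leqslant K'\}$. Call this common number $a$. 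Then $|S|=x\cdot a$, since for each of the $x$ choices of $R'\leqslant K$ there are exactly $a$ valid $K'$.

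Next I would fix one conjugate $K'$ of $K$ with $R\leqslant K'\cap K$ and count the conjugates $R'$ of $R$ with $R'\leqslant K'\cap K$; by hypothesis this includes $R$ itself, so there is at least one. By the same conjugation argument as above — transporting by an element carrying $K'$ back to $K$ — this count depends only on the number of $T$-conjugates of $R$ contained in $K$, which is $x$. Hence each of the $y$ valid $K'$ contributes exactly $x$ pairs, giving $|S|=y\cdot x$.

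Comparing the two expressions yields $x\cdot a = y\cdot x$, and since $x\neq 0$ (as $R\leqslant K$ gives at least one conjugate of $R$ in $K$) we may cancel to get $y=a$. It remains to evaluate $a$, the number of conjugates of $K$ containing a fixed conjugate $R'$ of $R$: the number of $g\in T$ with $R'\leqslant K^g$, divided by $|N_T(K^g)|=|K|$, is $a$; and the set $\{g:R'\leqslant K^g\}=\{g: g^{-1}R'g\leqslant K\}$ has size $x\cdot|N_T(R')|$ by the standard orbit-counting fact that the conjugates of $R'$ landing inside $K$ number $x$, each hit by exactly $|N_T(R')|$ group elements. Since $|N_T(R')|=|N_T(R)|$, we conclude
\[
y=a=\frac{x\cdot|N_T(R)|}{|K|},
\]
as required. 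The only mild obstacle is bookkeeping the two conjugation bijections carefully so that the "inner" counts are genuinely constant; everything else is routine orbit-stabiliser counting, using $N_T(K)=K$ from maximality.
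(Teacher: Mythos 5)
Your first count ($|S|=x\cdot a$) and your closing orbit--stabiliser evaluation of $a$ are both correct, but the second count of $S$, which is what you use to establish $a=y$, fails. Two things go wrong. First, because you built the extra condition $R'\leqslant K$ into $S$, the conjugates $K'$ contributing pairs to $S$ are not only the $y$ conjugates with $K'\cap K\geqslant R$: a conjugate $K'$ may fail to contain $R$ while $K'\cap K$ contains some \emph{other} conjugate $R'$ of $R$, and such $K'$ still contribute. Second, the transport argument does not show that the number of conjugates of $R$ inside $K'\cap K$ is $x$: conjugating $K'$ back to $K$ simultaneously moves $K$ to some other conjugate, so it identifies the conjugates of $R$ in $K'\cap K$ with those in $K\cap K''$ for some $K''$, not with those in $K$. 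Concretely, take $T=A_5$, $K=A_4$ a point stabiliser, $R=C_3$. Then $x=4$, $|N_T(R)|=6$, $|K|=12$, so $y=2$. Your set $S$ has $8$ elements, but decomposed by the second coordinate it is $4+1+1+1+1$ over \emph{five} conjugates $K'$ (only two of which satisfy $K'\cap K\geqslant R$, contributing $4+1=5$ pairs), not $4+4$ over the $y=2$ conjugates. The total happens to equal $xy$, but only because $|S|=xa$ and $a=y$ --- which is the very identity you are trying to derive, so the argument is circular as a justification.

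The repair is easy, and there are two natural routes. The quickest: $a=y$ is immediate from the definitions, since $R\leqslant K$ implies that $K'\cap K\geqslant R$ if and only if $K'\geqslant R$, so $y$ is exactly the number of conjugates of $K$ containing the particular conjugate $R$, i.e.\ $y=a$; your computation $a=x|N_T(R)|/|K|$ (which is correct, using $N_T(K)=K$ from maximality and simplicity) then finishes the proof, and the flawed double count can be deleted entirely. Alternatively, do what the paper does: count \emph{all} pairs $(X,Y)$ with $X$ conjugate to $R$, $Y$ conjugate to $K$ and $X\leqslant Y$, with no side condition $X\leqslant K$. Then both fibre counts really are constant by honest conjugation bijections, giving $\bigl(|T|/|N_T(R)|\bigr)y=x\bigl(|T|/|N_T(K)|\bigr)$, and $N_T(K)=K$ yields the formula. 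Either way you land on essentially the paper's argument.
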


\begin{proof} We will count the number of pairs $(X,Y)$ of subgroups of $T$, with $X$ conjugate to $R$, with $Y$ conjugate to $K$, and $X\leqslant Y$. By fixing $X$ and considering the possibilites for $Y$, and then by fixing $Y$ and considering the possibilities for $X$, we obtain
\[ (\#\text{conjugates of } R \text{ in } T)(\#\text{conjugates of } K \text{ in } T \text{ containing } R)\]
\[ =(\#\text{conjugates of } R \text{ in } T \text{ that are contained in } K)(\#\text{conjugates of } K \text{ in } T). \]
Hence
\[ \frac{|T|}{|N_T(R)|}\cdot y = x \cdot \frac{|T|}{|N_T(K)|}. \]
Now $K\leqslant N_T(K)\leqslant T$, so by the maximality of $K$ it follows that $N_T(K)=K$ or $T$. However, the simplicity of $T$ implies that $K$ is not normal in $T$, so $N_T(K)=K$. Thus
\[ y=\frac{x\cdot |N_T(R)|}{|K|}. \qedhere \] \end{proof} 

\begin{corollary}\label{one conjugacy class} We use the same notation as Lemma \ref{double counting}. Also, suppose that there is only one conjugacy class of subgroups isomorphic to $R$ in $K$. Then
\[ y=\frac{|N_T(R)|}{|N_K(R)|}. \] \end{corollary}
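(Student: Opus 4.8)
The plan is to build on Lemma \ref{double counting} by specialising $x$ to a quantity that can be read off directly from the structure of $K$. The statement of Corollary \ref{one conjugacy class} asserts that $y = |N_T(R)|/|N_K(R)|$, so comparing with the conclusion $y = x\cdot|N_T(R)|/|K|$ of the lemma, it suffices to show that under the extra hypothesis — that all subgroups of $K$ isomorphic to $R$ form a single $K$-conjugacy class — the number $x$ of $T$-conjugates of $R$ lying inside $K$ equals $|K|/|N_K(R)| = |K:N_K(R)|$.

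First I would observe that any $T$-conjugate of $R$ that happens to lie in $K$ is in particular a subgroup of $K$ isomorphic to $R$, hence (by the hypothesis) is $K$-conjugate to $R$; conversely every $K$-conjugate of $R$ is certainly a $T$-conjugate of $R$ contained in $K$. Therefore the set of $T$-conjugates of $R$ contained in $K$ coincides exactly with the $K$-conjugacy class of $R$. Counting this class by the orbit–stabiliser theorem for the conjugation action of $K$ on its subgroups, whose stabiliser of $R$ is $N_K(R)$, gives $x = |K:N_K(R)| = |K|/|N_K(R)|$.

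Substituting this value of $x$ into the formula from Lemma \ref{double counting} yields
\[
y = \frac{x\cdot|N_T(R)|}{|K|} = \frac{|K|/|N_K(R)|\cdot|N_T(R)|}{|K|} = \frac{|N_T(R)|}{|N_K(R)|},
\]
which is the claimed identity. The only point requiring a little care is the double inclusion establishing that ``$T$-conjugate of $R$ inside $K$'' is the same as ``$K$-conjugate of $R$''; the forward direction is precisely where the single-conjugacy-class hypothesis is used, and the reverse direction is immediate. No new obstacle arises beyond correctly invoking this hypothesis, so the proof is short.
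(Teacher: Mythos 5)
Your proposal is correct and is essentially the paper's own argument: the paper likewise computes $x=|K|/|N_K(R)|$ from the single-conjugacy-class hypothesis and substitutes into Lemma \ref{double counting}. You merely make explicit the double inclusion (every $T$-conjugate of $R$ inside $K$ is isomorphic to $R$, hence $K$-conjugate to it) that the paper leaves implicit.
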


\begin{proof} Since there is only one conjugacy class of $R$ in $K$, we have that $x=\frac{|K|}{|N_K(R)|}$ and the result follows. \end{proof}

We will be working a lot with the projective special linear groups. Information about their subgroups and maximal subgroups will prove to be useful. The list in Dickson ~\cite{PSLDickson} is the most commonly cited but contains an error about the number of conjugacy classes of dihedral groups. In particular, it states that for a divisor $d>2$ of $\frac{q\pm 1}{(2,q-1)}$, there is one conjugacy class of subgroups isomorphic to $D_{2d}$ for $d$ odd, and two conjugacy classes if $d$ is even. However, it is actually the case that there are two conjugacy classes for $d$ odd, and one conjugacy class for $d$ even. We also point the reader to \cite{PSLMoore} which contains a correct list. For the rest of this section, set $T=\PSL(2,q)$. We will denote the point stabiliser of a one-dimensional subspace by $P_1$, and begin with a lemma about involutions in cosets of $P_1$.

\begin{lemma}\label{P_1 involution} If $s\notin P_1$, then the coset $P_1s$ contains an involution. \end{lemma}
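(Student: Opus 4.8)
The plan is to work inside the natural $2$-transitive action of $T=\PSL(2,q)$ on the $q+1$ points of the projective line $\mathrm{PG}(1,q)$, where $P_1$ is the stabiliser of a point, say $\infty$. An element $s\notin P_1$ moves $\infty$ to some other point, say $s$ sends $\infty\mapsto a$ with $a\neq\infty$; I want to show the coset $P_1 s$ contains an involution, equivalently that some $p\in P_1$ gives $ps$ of order $2$. Since $P_1 s$ is exactly the set of elements of $T$ carrying $\infty$ to $a$, the claim becomes: there is an involution in $T$ swapping $\infty$ and $a$, or more precisely interchanging these two points (an involution in $\mathrm{PGL}(2,q)$ that lies in $\PSL(2,q)$). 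Because $T$ is $2$-transitive on the projective line, it suffices to exhibit one involution in $T$ interchanging a fixed pair of points and then conjugate; the standard choice is the map $x\mapsto -x$ fixing $0$ and $\infty$ — no, that fixes them — so instead take $x\mapsto c/x$, which swaps $0$ and $\infty$, has order $2$, and its matrix $\begin{pmatrix}0&c\\1&0\end{pmatrix}$ has determinant $-c$, hence lies in $\PSL(2,q)$ precisely when $-c$ is a square.

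So the key steps are: (1) reduce, via $2$-transitivity, to finding an involution in $T$ that interchanges two prescribed distinct points of the line; (2) for $q$ odd, observe that among the elements swapping $0$ and $\infty$ we have both $\begin{pmatrix}0&c\\1&0\end{pmatrix}$ with $-c$ a square and its counterpart with $-c$ a nonsquare, and one of these has determinant a square (indeed whichever makes $-c$ a square), so there is always an involution in $\PSL(2,q)$ swapping that pair; (3) for $q$ even, $\PSL(2,q)=\mathrm{PGL}(2,q)$ and the matrix $\begin{pmatrix}0&1\\1&0\end{pmatrix}$ directly gives an involution swapping $0$ and $\infty$ (in characteristic $2$, order $2$ is immediate). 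Conjugating this involution by an element of $T$ carrying $\{0,\infty\}$ to the pair $\{\infty, a\}$ produces an involution in $P_1 s$, since any element of $T$ swapping $\infty$ and $a$ lies in the coset $P_1 s$ (both send $\infty$ to $a$).

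Alternatively — and this is perhaps cleaner to write up — one can argue abstractly: an element $t\in P_1 s$ of order $2$ is an involution not fixing $\infty$, hence a fixed-point-free involution or one fixing exactly the two points other than $\infty$ that... actually every involution in $\PSL(2,q)$ has a known cycle type on the line ($\frac{q+1}{2}$ transpositions when $q\equiv 1\pmod 4$, $\frac{q-1}{2}$ transpositions and two fixed points when $q\equiv 3\pmod 4$, and $\frac{q}{2}$ transpositions... ), and a direct count of involutions in $T$ versus the number of point-pairs shows that for each ordered pair $(\infty,a)$ the set of elements of $P_1 s$ that are involutions is nonempty; but the explicit matrix construction above is the most transparent and avoids casework beyond odd/even characteristic. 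I expect the only real obstacle is the square/nonsquare bookkeeping in step (2): one must check that the determinant condition $\det\in(\mathbb{F}_q^\times)^2$ can be met simultaneously with order $2$, which it can because scaling the anti-diagonal entry $c$ by a nonsquare toggles whether $-c$ is a square while preserving that the matrix is a genuine involution in $\mathrm{PGL}(2,q)$; passing to the representative with square determinant lands us in $\PSL(2,q)$. A small degenerate case $q=2,3$ can be verified by hand. \qed
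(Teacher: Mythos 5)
Your proposal is correct and is essentially the paper's own argument: both identify $P_1s$ with the set of elements carrying the fixed point of $P_1$ to its image under $s$, and both exhibit an explicit involution doing this via a matrix of the form $\left(\begin{smallmatrix}0&1\\-1&0\end{smallmatrix}\right)$ whose square is the scalar $-I_2$. The only cosmetic difference is that the paper writes this matrix directly in the basis $\{v,w\}$ adapted to the pair of points, whereas you normalise to the pair $\{0,\infty\}$ by $2$-transitivity and then conjugate back.
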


\begin{proof} Let $\langle v\rangle$ be the one-dimensional subspace of $V$ stabilised by $P_1$, and let $\langle w\rangle = \langle v\rangle ^s$. Since $s\notin P_1$, it follows that $\{v, w\}$ is a basis of $V$. Take the element $g\in \text{SL}(2,q)$ such that $v^g=w$ and $w^g=-v$. Any element of $V$ can be written as $\alpha v + \beta w$ for some $\alpha,\beta\in \text{GF}(q)$, and we can show that $(\alpha v+\beta w)^{g^2}=-\alpha v - \beta w$, so $g^2 = -I_2$. Let $h$ be the permutation of $\mathcal{P}_1(V)$ induced by $g$. Then for all $u\in V$ we have
\[ \langle u\rangle^{h^2}=\langle u^{g^2}\rangle = \langle -u \rangle = \langle u \rangle, \]
so $h^2$ fixes all the one-dimensional subspaces of $V$ and $h$ is an involution. Note also that $\langle v\rangle^h=\langle v^g\rangle=\langle w \rangle=\langle v\rangle^s$, so $h\in P_1s$. So the coset $P_1s$ contains an involution. \end{proof}

The next few lemmas deal with possible intersections of conjugate subgroups of $T$. We now introduce some notation. Let $K$ be a subgroup of $T$. For any $R\leqslant K$, let $f(R)$ denote the number of conjugates of $K$ whose intersection with $K$ contains $R$, and let $g(R)$ denote the number of conjugates of $K$ whose intersection with $K$ is equal to $R$. Note that for $R$ maximal in $K$, we have $f(R)=g(R)+1$. 

\begin{lemma}\label{C_2} Suppose that $q\equiv\pm 1 \pmod{10}$, and $q$ is a prime or $q=p^2$ for some prime $p\equiv\pm 3 \pmod{10}$. Let $K\cong A_5$ be a subgroup of $T$. If $q>11$ there exists $t\in T$ such that $K\cap K^t\cong C_2$. \end{lemma}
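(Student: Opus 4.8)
The plan is to locate two conjugates of $K \cong A_5$ inside $T = \PSL(2,q)$ whose intersection has order exactly $2$, by a counting argument of the type set up in Lemma \ref{double counting} and Corollary \ref{one conjugacy class}. Take $R = C_2$ and use $g(R) = f(R) - 1$ together with the formula $f(R) = |N_T(R)|/|N_K(R)|$ from Corollary \ref{one conjugacy class}; this requires checking that $K \cong A_5$ contains a single conjugacy class of subgroups isomorphic to $C_2$ (true, since all involutions of $A_5$ are conjugate) and knowing $|N_K(R)|$ and $|N_T(R)|$. In $A_5$, the normaliser of an involution is $C_2 \times C_2$ of order $4$, and the centraliser of an involution in $T = \PSL(2,q)$ is well understood: for $q$ odd it is a dihedral group of order $q - \varepsilon$ where $\varepsilon = \pm 1$ with $q \equiv \varepsilon \pmod 4$, while the normaliser is the full dihedral group of that order (the involution is central in its centraliser). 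So $f(R) = (q - \varepsilon)/4$ (one must be slightly careful about whether $N_T(R) = C_T(R)$ here; since $R = \langle x \rangle$ with $x$ an involution, $N_T(R) = C_T(R)$ automatically).

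First I would record the hypotheses: $q \equiv \pm 1 \pmod{10}$ with $q$ prime or $q = p^2$, $p \equiv \pm 3 \pmod{10}$, which are exactly the conditions under which $T = \PSL(2,q)$ has maximal subgroups isomorphic to $A_5$ (by Dickson's classification, as corrected via \cite{PSLMoore}); under these hypotheses the $A_5$ subgroups form either one or two conjugacy classes, and in all cases $A_5$ is maximal in $T$, so Lemma \ref{double counting} and its corollary apply with $K \cong A_5$. Then I would compute: since $q > 11$ and $q$ is odd (note $q \equiv \pm 1 \pmod{10}$ forces $q$ odd), we have $q - \varepsilon \geqslant 12$, so $f(R) = (q-\varepsilon)/4 \geqslant 3 > 1$. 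Hence $g(R) = f(R) - 1 \geqslant 2 > 0$, which means there is at least one conjugate $K^t \ne K$ with $K \cap K^t = R \cong C_2$, giving the desired $t \in T$.

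The main obstacle I anticipate is bookkeeping around exactly which involution-normaliser data to plug in, and verifying the single-class hypothesis of Corollary \ref{one conjugacy class} for $C_2$ inside $A_5$ — this is routine but needs the standard fact that all involutions of $\PSL(2,q)$ are conjugate (true for $q$ odd) so that the conjugacy class of $R$ in $K$ lifts correctly, and that the relevant counting applies even when there are two $T$-classes of $A_5$ (one simply works within one class, where $K$ is fixed and Lemma \ref{double counting} is class-agnostic since it counts conjugates of a \emph{fixed} $K$). A secondary point is to confirm $f(R) = g(R) + 1$ genuinely applies: this needs $R$ maximal in $K \cong A_5$, which holds since $C_2$ is not maximal in $A_5$ — so actually one should instead argue directly that $g(R) \geqslant f(R) - (\text{number of conjugates of } K \text{ meeting } K \text{ in something strictly larger than } R)$, or more cleanly, since $f(R) \geqslant 3$ while the only subgroup of $A_5$ properly containing a given $C_2$ and arising as such an intersection would be one of bounded index, a short case-check shows at least one intersection equals $R$ exactly. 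I would handle this by bounding the number of conjugates $K^t$ with $K \cap K^t$ strictly containing $R$ using the same $f$/$g$ machinery one dimension up, and checking the arithmetic forces some intersection to be exactly $C_2$ once $q > 11$.
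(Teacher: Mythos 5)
Your central computation does not go through, and you in fact flag the reason yourself in the final paragraph: the identity $f(R)=g(R)+1$ requires $R$ to be maximal in $K$, and $C_2$ is very far from maximal in $A_5$. A conjugate $K^t$ with $K\cap K^t\supseteq C_2$ may meet $K$ in $C_2^2$, $S_3\cong D_6$, $D_{10}$, $A_4$ or all of $K$, so the correct identity is an inclusion--exclusion over the minimal overgroups of a fixed $C_2$ in $A_5$ (one $C_2^2$, two copies of $D_6$, two copies of $D_{10}$), namely $g(C_2)=f(C_2)-2g(D_{10})-2g(D_6)-f(C_2^2)$. Carrying this out is the substance of the paper's proof: one must bound $f(C_2^2)$, $f(D_6)$ and $f(D_{10})$ from above (each turns out to be at most $2$), and this requires identifying $N_T(C_2^2)$, $N_T(D_6)$ and $N_T(D_{10})$ inside $\PSL(2,q)$ from the subgroup lists. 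Your proposal gestures at "the same $f$/$g$ machinery one dimension up" but does not execute any of it, so the proof is incomplete at exactly the point where the real work lies.

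Moreover, even granting the correct inclusion--exclusion, your numerical bound is too weak. You conclude from $q>11$ that $f(C_2)\geqslant 3$, but the subtracted terms can total as much as $2\cdot 2+2\cdot 2+2-4=6$ (after converting the $g$'s back to $f$'s), so one needs roughly $f(C_2)=(q-\varepsilon)/4>6$, i.e.\ $q>25$, for the generic argument to close. The congruence hypotheses on $q$ leave exactly one value between $11$ and $25$, namely $q=19$, and that case needs a separate refinement (there $f(C_2)=5$, and one must observe that $\PSL(2,19)$ contains no $D_{12}$, forcing $f(D_6)=1$). Your write-up addresses neither the threshold $q>25$ nor the exceptional case $q=19$. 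The ingredients you do supply correctly --- a single $K$-class of $C_2$, $|N_K(C_2)|=4$, $N_T(C_2)=C_T(C_2)$ dihedral of order $q-\varepsilon$, and the applicability of Lemma \ref{double counting} to a fixed maximal $K$ regardless of how many $T$-classes of $A_5$ there are --- all match the paper, but they are the easy part.
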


\begin{proof} Consider a fixed $C_2$ in $K$. It is contained in two copies of $D_{10}$, two copies of $D_6$ and one copy of $C_2^2$ in $K$. We want to show that $g(C_2)=f(C_2)-2g(D_{10})-2g(D_6)-f(C_2^2)=f(C_2)-2f(D_{10})-2f(D_6)-f(C_2^2)+4$ is positive, where the second equality follows from the fact that $D_{10}$ and $D_6$ are maximal in $A_5$. 

For $R=C_2,D_{10},D_6$ and $C_2^2$ there is only one conjugacy class of subgroups isomorphic to $R$ in $K$, so by Corollary \ref{one conjugacy class}, we have
\[ f(R) = \frac{|N_T(R)|}{|N_K(R)|}. \]

It is easy to prove that $|N_K(C_2)|=4$, $|N_K(D_{10})|=10$, $|N_K(D_6)|=6$ and $|N_K(C_2^2)|=12$. 
Choose $\epsilon\in \{-1, 1\}$ such that $\frac{q+\epsilon}{2}$ is even. Then we have $N_T(C_2)=D_{q+\epsilon}$, so $|N_T(C_2)|\geqslant q-1$. By looking through the list of maximal subgroups of $T$, we see that $N_T(C_2^2)\leqslant C_2^2, A_4, S_4$ or $D_{q\pm 1}$. If $C_2^2\cong D_4 \leqslant N_T(D_4)\leqslant D_{q\pm 1}$, then $N_T(D_4)=N_{D_{q\pm 1}}(D_4)=D_4$ or $D_8$. So $|N_T(C_2^2)|\leqslant 24$. 
For $n=6$ or $10$, we use the list of maximal subgroups of $T$ given in \cite{PSLMoore} to see that $D_n\leqslant N_T(D_n)\leqslant D_{q\pm 1}, A_5$ or $S_4$. In the last two cases, if $D_n$ is a subgroup, it must be maximal, and hence $N_T(D_n)=D_n$. In the first case, we have $N_T(D_n)=N_{D_{q\pm 1}}(D_n)=D_n$ or $D_{2n}$. So $|N_T(D_n)|\leqslant 2n$ for $n=6$ and $10$. 

Putting this all together, we obtain $f(C_2)\geqslant \frac{q-1}{4}$, $f(C_2^2)\leqslant \frac{24}{12}=2$, $f(D_6)\leqslant \frac{12}{6}=2$ and $f(D_{10})\leqslant \frac{20}{10}=2$. It now follows that $g(C_2) \geqslant \frac{q-1}{4}-2\times 2-2\times 2-2+4>0$ for $q>25$. Taking into account the restrictions on $q$, it remains to consider the $q=19$ case in more detail. Here $f(C_2)=\frac{q+1}{4}=5$. Also, there is no $D_{12}$ in $T$, so $N_T(D_6)=D_6$ and $f(D_6)=1$. Then $g(C_2)\geqslant 5-2\times 2 - 2\times 1 - 2 + 4 > 0$, as desired. \end{proof}

\begin{lemma}\label{S_4} Suppose that $q\equiv \pm 1 \pmod{8}$ is a prime. Let $K\cong S_4$ be a subgroup of $T$. Then there exists $t\in T$ such that $K\cap K^t\cong C_2^2$. Moreover, if $q\geqslant 17$, then there exists $t\in T$ such that $K\cap K^t\cong C_2$. \end{lemma}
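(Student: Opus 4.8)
The plan is to follow the double-counting method of Lemma~\ref{C_2}, with $K\cong S_4$ now the maximal subgroup in question (maximal since $q$ is prime and $q\equiv\pm1\pmod 8$, so that Lemma~\ref{double counting} and Corollary~\ref{one conjugacy class} apply), and I use throughout that $q\equiv\pm1\pmod 8$ forces $N_T(R)\cong S_4$ for every Klein four subgroup $R$ of $T=\PSL(2,q)$. Write $f,g$ for the counting functions attached to $K$. A $D_8$ has a characteristic cyclic subgroup of order $4$, which in $T$ lies in a unique maximal torus; since the normaliser of that torus is dihedral, $N_T(D_8)\cong D_8$ or $D_{16}$ for every $D_8\leqslant T$ (the latter exactly when $q\equiv\pm1\pmod{16}$), and since $K$ has a single class of subgroups $\cong D_8$, self-normalising in $K$, Corollary~\ref{one conjugacy class} gives $f(D_8)=|N_T(D_8)|/8\leqslant2$, hence $g(D_8)\leqslant1$.

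For the first claim, take $W$ to be one of the three (pairwise $K$-conjugate) non-normal Klein four subgroups of $K$. Since $W\not\leqslant A_4$, the only subgroups of $K$ properly containing $W$ are $K$ and the unique $Q\cong D_8$ of $K$ with $W\leqslant Q$, so $f(W)=g(W)+g(Q)+1$. As $|N_T(W)|=|K|=24$ and $K$ has three subgroups $T$-conjugate to $W$, Lemma~\ref{double counting} gives $f(W)\geqslant3$. Hence $g(W)=f(W)-g(Q)-1\geqslant3-1-1=1>0$, so there is $t\in T$ with $K\cap K^t=W\cong C_2^2$.

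For the second claim, pick $v$ to be an involution of $K$ lying inside a non-normal Klein four $W_0\leqslant K$ --- a ``transposition'' of $S_4$; the key point of this choice (rather than a ``double transposition'') is that $\langle v\rangle$ lies in no cyclic group of order $4$ in $K$, so the subgroups of $K$ properly above $\langle v\rangle$ --- namely $W_0$, the unique $Q_0\cong D_8$ of $K$ above $\langle v\rangle$, two subgroups $\cong S_3$, and $K$ --- all have small normaliser in $T$. Subtracting the expansion of $f(W_0)$ (whose proper overgroups in $K$ are $Q_0$ and $K$ only) from that of $f(\langle v\rangle)$ yields
\[
g(\langle v\rangle)=f(\langle v\rangle)-f(W_0)-2f(S_3)+2 .
\]
Now all involutions of $T$ are conjugate with $C_T(v)\cong D_{q-\delta}$ where $q\equiv\delta\pmod 4$, so $f(\langle v\rangle)=9\,|C_T(v)|/|K|=3(q-\delta)/8$; also $f(W_0)\leqslant4$ (there are only four Klein fours in $K$, and $|N_T(W_0)|=24$), and $f(S_3)=|N_T(S_3)|/6\leqslant2$, since a subgroup $\cong S_3$ of $T$ lies in a maximal subgroup $\cong S_4$, $A_5$, or $D_{q\pm1}$, in each of which its normaliser has order at most $12$. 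Thus $g(\langle v\rangle)\geqslant3(q-\delta)/8-6>0$ once $q-\delta\geqslant17$, which handles every admissible $q$ except $q=17$ (and $q=7$ is excluded by hypothesis). For $q=17$, since $\PSL(2,17)$ has no subgroup $\cong D_{12}$ one has $N_T(S_3)\cong S_3$, so $f(S_3)=1$ and $g(\langle v\rangle)=6-f(W_0)\geqslant2>0$.

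The step I expect to be most delicate is keeping the double counting honest: $f$ and $g$ only see $T$-conjugates of $K$, while $\PSL(2,q)$ has two classes of subgroups $\cong S_4$, so whenever a relevant subgroup --- $N_T(W)$ for a non-normal Klein four $W$, or a second $S_4$ through some $D_8$ --- lands in the other class, one must check it is neither over- nor under-counted. The identification $N_T(D_8)\cong D_8$ or $D_{16}$ and the bound $|N_T(S_3)|\leqslant12$ likewise need a careful reading of the subgroup lattice of $\PSL(2,q)$ (using the corrected list in~\cite{PSLMoore}, as in Lemma~\ref{C_2}); and the arithmetically tight case $q=17$, where the crude estimate falls one short, requires the extra fact that $\PSL(2,17)$ contains no $D_{12}$.
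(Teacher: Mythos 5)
Your proof is correct and follows essentially the same double-counting strategy as the paper: the identity $g(W)=f(W)-f(Q)$ for the non-normal Klein four, the identity $g(\langle v\rangle)=f(\langle v\rangle)-f(W_0)-2f(S_3)+2$ for a transposition, and the same bounds $f(D_8)\leqslant 2$, $f(W_0)\leqslant 4$, $f(S_3)\leqslant 2$, $f(\langle v\rangle)=3(q-\delta)/8$. There are two points of genuine divergence worth noting. First, the paper disposes of the boundary case $q=17$ by a \textsc{Magma} computation, whereas you handle it theoretically: since $6$ divides neither $(17-1)/2$ nor $(17+1)/2$, the group $\PSL(2,17)$ has no $D_{12}$, so $N_T(S_3)=S_3$, $f(S_3)=1$, and $g(\langle v\rangle)\geqslant 6-f(W_0)\geqslant 2>0$. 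This is a small but real improvement, as it removes the only computational step from the lemma. Second, you assert as known that $q\equiv\pm1\pmod 8$ forces $N_T(R)\cong S_4$ for every Klein four $R$ (and you flag this as the delicate point); the paper actually proves it, via the observation that $T$ has exactly two classes of $C_2^2$ and two classes of $S_4$, that the normal Klein fours of non-conjugate $S_4$'s cannot be conjugate, and hence that every $C_2^2$ is conjugate to the normal Klein four of some $S_4$, which is its full normaliser by maximality. The fact is true and standard (Dickson), so this is a matter of self-containedness rather than a gap, but if you want the argument to stand on the paper's own cited sources you should include that short derivation rather than leaning on the blanket claim.
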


\begin{proof} Let $X$ be a subgroup of $K$ isomorphic to $C_2^2$ such that $X$ is not normal in $K$. Note that there is a unique $Y$ such that $X<Y<K$. Moreover, $Y\cong D_8$. Thus $g(X) = f(X) - f(D_8)$, which we want to show is positive.  

There is only one conjugacy class for $D_8$ in $K$, so by Corollary \ref{one conjugacy class} we have
\[ f(D_8) = \frac{|N_T(D_8)|}{|N_K(D_8)|}=\frac{|N_T(D_8)|}{8}. \]
From the list of maximal subgroups of $T$ we have $D_8\leqslant N_T(D_8)\leqslant D_{q\pm 1}$ or $S_4$. In the first case, we have $N_T(D_8)=N_{D_{q\pm 1}}(D_8)=D_8$ or $D_{16}$. In the second case, we have $N_T(D_8)=D_8$ as $D_8$ is maximal and not normal in $S_4$. So $|N_T(D_8)|\leqslant 16$. This implies that $f(D_8)\leqslant \frac{16}{8}=2$. 

We claim that $N_T(X)\cong S_4$. There are two conjugacy classes of $S_4$ in $T$, so let $J$ be a copy of $S_4$ not conjugate to $K$. Let $P$ be the $C_2^2$ in $J$ that is normal in $J$, and let $Q$ be the $C_2^2$ in $K$ that is normal in $K$. Now there are two conjugacy classes of $C_2^2$ in $T$. Since the normalisers of $P$ and $Q$ in $T$ ($J$ and $K$, respectively) are not conjugate, the subgroups $P$ and $Q$ cannot be conjugate. Hence $X$ is conjugate to either $P$ or $Q$. Thus the normaliser of $X$ is conjugate to the normaliser of either $P$ or $Q$, both of which are isomorphic to $S_4$. This proves that $N_T(X)\cong S_4$. 

The conjugacy class of $X$ in $K$ has size $3$, so the number of conjugates of $X$ in $T$ contained in $K$ is at least $3$. Then Lemma \ref{double counting} implies that 
\[ f(X)\geqslant \frac{3\cdot 24}{24}=3. \]

Putting this all together yields $g(X)\geqslant 3-2 > 0$. This proves the first part of the lemma.

For the second part of the lemma, we verify the $q=17$ case by a \textsc{Magma} ~\cite{Magma} calculation. We now show that if $q > 17$ then there exists $t\in T$ such that $K\cap K^t = Z$, where $Z$ is a subgroup of $X$ isomorphic to $C_2$. Since the only subgroups of $K$ that contain $Z$ as a maximal subgroup are $X$ and the copies of $S_3$, we need to show that $g(Z)=f(Z)-f(X)-2g(S_3)$ is positive. Since $S_3$ is maximal in $K$, we have $g(S_3)=f(S_3)-1$, and thus $g(Z)=f(Z)-f(X)-2f(S_3)+2$. 

There is only one conjugacy class for $Z$ in $T$, so the number of conjugates of $Z$ in $T$ that are contained in $K$ is equal to the number of copies of $C_2$ in $K$, which is equal to $9$. Now choose $\epsilon\in \{-1, 1\}$ such that $\frac{q+\epsilon}{2}$ is even. Then $N_T(Z)\cong D_{q+\epsilon}$. By Lemma \ref{double counting} we have 
\[ f(Z) = \frac{9(q+\epsilon)}{24}\geqslant \frac{3(q-1)}{8}. \]

There is only one conjugacy class for $S_3$ in $K$ so by Corollary \ref{one conjugacy class} we have
\[ f(S_3)=\frac{|N_T(S_3)|}{|N_K(S_3)|}=\frac{|N_T(S_3)|}{6}. \]
Now from the list of maximal subgroups of $T$ as given in \cite{PSLMoore} we have $S_3\leqslant N_T(S_3)\leqslant D_{q\pm 1}, S_4$ or $A_5$. In the first case, we have $N_T(S_3)=N_T(D_6)=N_{D_{q\pm 1}}(D_6)=D_6$ or $D_{12}$. In the second and third cases, we have $N_T(S_3)=S_3$ as $S_3$ is maximal and not normal in $S_4$ and $A_5$. So $|N_T(S_3)|\leqslant 12$. Thus $f(S_3)\leqslant \frac{12}{6}=2$.

There are four copies of $C_2^2$ in $K$ and earlier in this proof we calculated $|N_T(X)|=24$. So by Lemma \ref{double counting} we get
\[ f(X)\leqslant\frac{4\cdot 24}{24}=4. \]
So $g(Z)\geqslant \frac{3(q-1)}{8}-4-2\times 2 + 2 > 0$ for $q > 17$. Thus the second part of the lemma holds. \end{proof}

\begin{lemma}\label{dihedral intersect C_2} Let $q=2^f$ for some $f\geqslant 2$ and suppose that $K\cong D_{2(q+1)}$ is a subgroup of $T$. Then there exists $t\in T$ such that $K\cap K^t\cong C_2$. \end{lemma}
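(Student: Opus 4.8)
The plan is to work inside the dihedral group $K\cong D_{2(q+1)}$, whose cyclic subgroup $C$ of order $q+1$ is known to be one of the standard ``non-split torus'' cyclic subgroups of $T=\PSL(2,q)$ when $q=2^f$. Since $q$ is even, $q+1$ is odd, so the unique involutions in $K$ are the $q+1$ reflections, and any $C_2$ inside $K$ is generated by such a reflection. The aim is to find a conjugate $K^t$ meeting $K$ in exactly one $C_2$. As in Lemmas~\ref{C_2} and \ref{S_4}, I would fix one involution $z\in K$, set $Z=\langle z\rangle$, and show $g(Z)>0$, where $g(Z)=f(Z)-2g(D_{2\cdot\text{(something)}})-\dots$ accounts for the conjugates of $K$ whose intersection with $K$ strictly contains $Z$. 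The only subgroups of $K\cong D_{2(q+1)}$ properly containing $Z$ are the larger dihedral subgroups $D_{2d}$ with $d\mid q+1$ and $d>1$ in which $z$ lies; for $z$ a given reflection, the number of dihedral subgroups of order $2d$ containing $z$ is exactly one for each such $d$ (the reflections split into $d$ classes under $C$, but $z$ lies in precisely one $D_{2d}$ for each divisor $d$). So I expect $g(Z)=f(Z)-\sum_{1<d\mid q+1}g(D_{2d})$, and using $f=g+1$ for maximal steps this telescopes down; the cleanest route is probably to bound $f(Z)$ from below and $\sum_{d}f(D_{2d})$ from above directly.

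The key computational inputs are: first, $N_T(Z)\cong D_{q+1}$ when $q$ is even (the normaliser of an involution in $\PSL(2,2^f)$ is a maximal dihedral group of order $q+1$... wait, $q+1$ is odd so $D_{q+1}$ has order $q+1$ — more precisely $N_T(Z)$ is elementary abelian of order $q$ times... I need to be careful here: in $\PSL(2,2^f)$ a Sylow $2$-subgroup is elementary abelian of order $q$, self-centralising, with normaliser the Borel $P_1$ of order $q(q-1)$; so $C_T(z)$ is the Sylow $2$-subgroup $U$ of order $q$, and $N_T(Z)=N_T(U)=P_1$ has order $q(q-1)$). So $|N_T(Z)|=q(q-1)$, which is large. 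Second, there is a single conjugacy class of involutions in $T$, hence a single class of subgroups $\cong C_2$, and the number of them inside $K\cong D_{2(q+1)}$ is $q+1$; by Lemma~\ref{double counting}, $f(Z)=\dfrac{(q+1)\,|N_T(Z)|}{|K|}=\dfrac{(q+1)q(q-1)}{2(q+1)}=\dfrac{q(q-1)}{2}$, which grows quadratically. Third, for each divisor $d>1$ of $q+1$, the subgroup $D_{2d}\leqslant K$ lies in $N_T(D_{2d})$, and since $d\mid q+1$ with $q+1$ odd, $N_T(D_{2d})$ is contained in a maximal dihedral $D_{2(q+1)}$ or, for $d\le$ small values, possibly in an $A_5$ when $5\mid q+1$; in all cases $|N_T(D_{2d})|$ is $O(q)$ at worst when $d=q+1$ itself (where $N_T(K)=K$) and much smaller otherwise. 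Applying Corollary~\ref{one conjugacy class} (using the correction to Dickson noted before Lemma~\ref{P_1 involution}, that for $d$ even there is one class of $D_{2d}$ and for $d$ odd two classes — here $d$ is always odd since $q+1$ is odd, so one must be slightly careful, but Lemma~\ref{double counting} can be used directly in any case) gives $f(D_{2d})$ bounded by a constant for each fixed small $d$ and in general $\sum_{1<d\mid q+1} f(D_{2d})$ is dominated by the divisor structure of $q+1$.

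So the heart of the argument is the inequality
\[
g(Z) \;\geqslant\; \frac{q(q-1)}{2} \;-\; \sum_{\substack{d\mid q+1\\ d>1}} f(D_{2d}) \;>\; 0,
\]
and the work is in showing the sum on the right is $o(q^2)$, or more concretely bounded by something like $C\cdot q\cdot \tau(q+1)$ (where $\tau$ is the number-of-divisors function), which is easily beaten by $q(q-1)/2$ for all $q=2^f$ with $f\geqslant 2$; the tiny case $q=4$ (where $q+1=5$ and one must watch the $A_5$ overgroups, since $\PSL(2,4)\cong A_5$ and $K\cong D_{10}$ is maximal, giving $f(D_{10})=1$ and $f(Z)=\frac{4\cdot3}{2}=6$, so $g(Z)\ge 6-1>0$) can be checked by hand or by \textsc{Magma}~\cite{Magma}. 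The main obstacle I anticipate is pinning down $N_T(D_{2d})$ for the various divisors $d$ of $q+1$ cleanly — in particular handling the sporadic overgroups $A_5$ (and $S_4$ when $q$ is even? no, $S_4\not\le\PSL(2,2^f)$ since that would need $8\mid |T|$ appropriately — actually $\PSL(2,2^f)$ has no $S_4$) — but since every proper overgroup of a dihedral $D_{2d}$ with $d$ odd and $d>1$ is either a bigger dihedral group or $A_5$ (the latter only when $5\mid q+1$, i.e.\ $d\in\{5, q+1/\text{cofactor}\}$ shapes), the bookkeeping is finite and uniform; I would organise it exactly as in the proof of Lemma~\ref{C_2}, listing $N_T(R)\le D_{q\pm1}$ or $A_5$ and reading off the bound, then finishing with the small-$q$ check.
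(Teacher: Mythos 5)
Your proposal takes a genuinely different route from the paper (a conjugate-counting argument in the style of Lemmas~\ref{C_2} and \ref{S_4}, rather than an explicit construction), and while the strategy is salvageable, as written it contains a concrete error that breaks the final inequality. You compute $N_T(Z)=N_T(U)=P_1$ of order $q(q-1)$; but for $Z=\langle z\rangle$ of order $2$ one has $N_T(Z)=\textbf{C}_T(z)$ (since $\mathrm{Aut}(C_2)=1$), and $\textbf{C}_T(z)=U$ is the elementary abelian Sylow $2$-subgroup of order $q$ --- normalising the group generated by $z$ is not the same as normalising the whole Sylow subgroup containing it. Lemma~\ref{double counting} then gives $f(Z)=\frac{(q+1)\cdot q}{2(q+1)}=\frac{q}{2}$, not $\frac{q(q-1)}{2}$ (your own sanity check at $q=4$ already signals this: you get $f(Z)=6$, i.e.\ $z$ would lie in all six conjugates of $D_{10}$ in $A_5$, which is false; the correct value is $2$). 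With the corrected linear bound $f(Z)=q/2$, your proposed estimate ``$\sum_{d}f(D_{2d})=O(q\,\tau(q+1))$, easily beaten by $q(q-1)/2$'' no longer closes the argument. To rescue it you need the sharper fact that for every divisor $d$ of $q+1$ with $1<d<q+1$ one has $N_T(D_{2d})=D_{2d}$, hence $f(D_{2d})=1$ and $g(D_{2d})=0$ (this follows because $C_d$ is characteristic in $D_{2d}$ for $d$ odd, so $N_T(D_{2d})\leqslant N_T(C_d)=D_{2(q+1)}$, and $D_{4d}\nleqslant D_{2(q+1)}$ as $q+1$ is odd); then $g(Z)=f(Z)-1=\frac{q}{2}-1>0$. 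You gesture at this bookkeeping but explicitly leave it open, and your hedged bound is not strong enough.

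For comparison, the paper's proof avoids counting entirely: fix an involution $y\in K$; since $q+1$ is odd, $\textbf{C}_K(y)=\langle y\rangle$, while $\textbf{C}_T(y)$ is elementary abelian of order $q\geqslant 4$, so there is $t\in\textbf{C}_T(y)\setminus K$, giving $y\in K\cap K^t\neq K$. Any nontrivial odd-order $x\in K$ generates a subgroup normal in $K$, so $N_T(\langle x\rangle)=K$ by maximality; if $x$ also lay in $K^t$ the same argument would force $K^t=K$. Hence $K\cap K^t$ is a $2$-group, i.e.\ $\langle y\rangle$. This is shorter and sidesteps all normaliser bookkeeping; your approach, once corrected, would yield the stronger quantitative statement that there are exactly $\frac{q}{2}-1$ conjugates meeting $K$ in a given $C_2$, but that extra information is not needed here.
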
 

\begin{proof} Let $y\in K$ be an involution. Since $q+1$ is odd all involutions of $K$ are conjugate and $\textbf{C}_K(y)=\langle y\rangle$. Moreover, all involutions in $T$ are conjugate and $T$ contains an elementary abelian subgroup of order $2^f\geqslant 4$. Hence there exists $t\in \textbf{C}_T(y)$ with $t\notin K$. Thus $y\in K\cap K^t\neq K$. We show that $K\cap K^t=\langle y\rangle$. Let $x\in K$ have odd order. Then by the maximality of $K$ in $T$ we have that $K=N_T(\langle x\rangle)$. If we also have $x\in K^t$ then by the same argument we have $K^t=N_T(\langle x\rangle)$, contradicting $K\neq K^t$. Thus $|K\cap K^t|$ is a power of $2$ and we are done. \end{proof}

\section{When $T=\text{PSL}(2,q)$}\label{T=PSL}

We will be working with the nonabelian simple group $T=\PSL(2,q)$, so for convenience let $G(m,q)=G(m,\text{PSL}(2,q))$ where $m\geqslant 2$. Recall $H=T\wr S_m$ and $L=\{(x,\ldots, x)\sigma\mid x\in T,\sigma\in S_m\}$.

The first lemma deals with the $m=2$ case. We shall write $S_2=\langle\iota\rangle$, so that $H=T\wr S_2=(T\times T)\rtimes\langle\iota\rangle$. 

\begin{lemma}\label{dihedral P_1} Suppose $m=2$. Let $D=P_1\wr S_2=(P_1\times P_1)\rtimes\langle\iota\rangle$ and suppose $t=(1,s)\in H$ such that $s\notin P_1$ is an involution. Then $D^t\cap L\cong D_{\frac{2(q-1)}{(2,q-1)}}$ and $\iota\notin D^t\cap L$. \end{lemma}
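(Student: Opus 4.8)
The plan is to compute $D^{t}\cap L$ directly inside $H=(T\times T)\rtimes\langle\iota\rangle$, splitting everything according to the two cosets of $T\times T$ in $H$. Write $n=\frac{q-1}{(2,q-1)}$, let $\langle v\rangle$ be the $1$-space fixed by $P_{1}$, and put $\langle w\rangle=\langle v\rangle^{s}$; since $s\notin P_{1}$ these are distinct points of the projective line, and since $s$ is an involution it interchanges them. First I would conjugate $D=(P_{1}\times P_{1})\cup(P_{1}\times P_{1})\iota$ by $t=(1,s)$; using $s^{-1}=s$ and $\iota(1,s)=(s,1)\iota$ one obtains
\[ D^{t}=\bigl(P_{1}\times P_{1}^{s}\bigr)\ \cup\ \bigl(P_{1}s\times sP_{1}\bigr)\iota , \]
where $P_{1}^{s}=T_{\langle w\rangle}$. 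Intersecting with $L=\{(x,x)\mid x\in T\}\cup\{(x,x)\iota\mid x\in T\}$, the part of $D^{t}\cap L$ lying in $T\times T$ is $C:=\{(x,x)\mid x\in P_{1}\cap P_{1}^{s}\}$, and the part lying in the coset $(T\times T)\iota$ is $\{(x,x)\iota\mid x\in P_{1}s\cap sP_{1}\}$.

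Next I would identify these two pieces via the subgroup structure of $T=\PSL(2,q)$ recorded in Section~3. The group $P_{1}\cap P_{1}^{s}=T_{\langle v\rangle}\cap T_{\langle w\rangle}$ is the stabiliser of the ordered pair of distinct points $(\langle v\rangle,\langle w\rangle)$, hence a cyclic maximal torus of order $n$, so $C\cong C_{n}$. For the other piece, a short coset manipulation shows $x\in P_{1}s\cap sP_{1}$ if and only if $xs\in P_{1}$ and $sx\in P_{1}$, that is, if and only if $\langle v\rangle^{x}=\langle w\rangle$ and $\langle w\rangle^{x}=\langle v\rangle$; thus $P_{1}s\cap sP_{1}$ is exactly the set of elements of $T$ interchanging $\langle v\rangle$ and $\langle w\rangle$, which is a single coset of the ordered-pair stabiliser, so it has size $n$ and is nonempty (it contains $s$). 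Hence $\lvert D^{t}\cap L\rvert=2n$; moreover $\iota=(1,1)\iota\notin D^{t}\cap L$, since $1$ does not interchange $\langle v\rangle$ and $\langle w\rangle$ (equivalently, $\iota\in D^{t}$ would force $s\in P_{1}$), which gives the second assertion.

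Finally I would pin down the isomorphism type. Set $j=(s,s)\iota$; since $s$ interchanges $\langle v\rangle$ and $\langle w\rangle$, $j$ lies in the $\iota$-part of $D^{t}\cap L$, and $j^{2}=(s^{2},s^{2})=(1,1)$, so $j$ is an involution not in $C$. As $\lvert C\rvert=n$ and $j\notin C$, we get $D^{t}\cap L=C\cup Cj=\langle C,j\rangle$, whence $C\trianglelefteq D^{t}\cap L$ and $D^{t}\cap L=C\rtimes\langle j\rangle$. A short computation gives $j(z,z)j^{-1}=(szs^{-1},szs^{-1})$ for $(z,z)\in C$; and because $s$ lies in $N_{T}(T_{\langle v\rangle}\cap T_{\langle w\rangle})$ but outside the torus itself --- the normaliser being dihedral of order $2n$ with its outer coset inverting the cyclic part (concretely, conjugating a diagonal element by an antidiagonal one inverts it) --- we have $szs^{-1}=z^{-1}$. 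Thus $j$ inverts the cyclic group $C\cong C_{n}$, and therefore $D^{t}\cap L\cong D_{2n}=D_{\frac{2(q-1)}{(2,q-1)}}$, as claimed.

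I expect the main obstacle to be bookkeeping rather than a conceptual one: one must keep the $\iota$-components straight while conjugating and intersecting inside the wreath product, and --- more importantly --- one must determine the isomorphism type and not merely the order, which comes down to verifying that the interchanging element $s$ actually \emph{inverts} (not merely normalises) the torus $P_{1}\cap P_{1}^{s}$; this is exactly what rules out the other extensions $C_{n}\rtimes C_{2}$ and forces the group to be dihedral. Both points rest on standard facts about $\PSL(2,q)$ already collected in Section~3, so no genuinely new difficulty should arise.
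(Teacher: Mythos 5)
Your proposal is correct and follows essentially the same route as the paper: both split $D^t\cap L$ over the two cosets of $M=\{(x,x)\mid x\in T\}$ in $L$, identify the diagonal part with $P_1\cap P_1^s\cong C_{\frac{q-1}{(2,q-1)}}$ and the other part via the element $(s,s)\iota$, and conclude $D^t\cap L\cong\langle P_1\cap P_1^s,s\rangle\cong D_{\frac{2(q-1)}{(2,q-1)}}$. The only difference is that you spell out why this extension is dihedral (the interchanging involution $s$ inverts the torus), a standard fact about $\PSL(2,q)$ that the paper leaves implicit.
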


\begin{proof} We have $P_1\cap P_1^s\cong C_{\frac{q-1}{(2,q-1)}}$ and $\langle P_1\cap P_1^s,s\rangle\cong D_{\frac{2(q-1)}{(2,q-1)}}$. Let $M=\{ (x,x)\mid x\in T\}$. Then $D^t\cap M\cong P_1\cap P_1^s$. Moreover, $\iota^{(1,s)}=(s,s)\iota\in D^t\cap L$. Since $|L:M|=2$, it follows that $|D^t\cap L:D^t\cap M|=2$ and so $D^t\cap L=\langle \{(x,x)\mid x\in P_1\cap P_1^s\}, (s,s)\iota\rangle\cong \langle P_1\cap P_1^s,s\rangle\cong D_{\frac{2(q-1)}{(2,q-1)}}$. Moreover, note that $\iota\notin D^t\cap L$. \end{proof}

\begin{theorem}\label{subdegrees} Each row of Table \ref{list} gives a triple $(m,q,d)$ such that $d$ is a subdegree of $G(m,q)$. Unless stated otherwise, we assume $m\geqslant 2$ and $q\geqslant 4$. 

\begin{table}
\caption {Some subdegrees of $G(m,q)$} \label{list} 
\begin{center}
\begin{tabular}{ |c|c|c| } 
 \hline
 $m$ & $q$ & $d$ \\ 
 \hline\hline
$\geqslant 3$ & \text{all} & $(q+1)^m$\\
$2$ & $\equiv 1\pmod{4}$ & $(q+1)^2$\\    
 \hline                      
& $\equiv 1\pmod{4}$ & $(\frac{1}{2}q(q+1))^m$\\ 
& $\equiv 3\pmod{4}$ & $(\frac{1}{2}q(q-1))^m$ \\
 \hline
& even & $(\frac{1}{2}q(q-1))^m$ \\
 \hline
& odd & $(q(q-1))^m$ \\
& odd, $q\geqslant 7$ & $(q(q+1))^m$ \\
& even & $(q(q-1))^m$ and $(q(q+1))^m$\\
 \hline
& \text{all} & $\left (\frac{1}{(2,q-1)}(q^2-1)\right )^m$\\
 \hline
& $\equiv\pm 1\pmod{8}$ and prime & $\left (\frac{|T|}{24}\right )^m$ \\
 \hline
& $\equiv\pm 1\pmod{10}$, where $q$ prime & $\left (\frac{|T|}{60}\right )^m$\\
& or $q=p^2$ where $p\equiv\pm 3\pmod{10}$ prime & \\
 \hline
\end{tabular}
\end{center}
\end{table}
\end{theorem}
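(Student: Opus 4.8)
The plan is to go through Table \ref{list} row by row, matching each entry $(m,q,d)$ to one of the constructions of Section \ref{TW}: either to Lemma \ref{centraliser} applied to a suitable conjugacy class of $T=\PSL(2,q)$, or to one of Corollaries \ref{int with conj has centre}, \ref{C_2 int}, \ref{max} applied to a suitable maximal subgroup $K$ of $T$ together with a conjugate $K^s$. The facts about $\PSL(2,q)$ that will be used are standard: the point stabiliser $P_1$ of a projective point is maximal of index $q+1$, and two distinct conjugates of $P_1$ intersect in a cyclic group of order $(q-1)/(2,q-1)$; the torus normalisers $D_{q-1},D_{q+1}$ (of orders $q-1,q+1$) are maximal for all but small $q$; $S_4$ is maximal when $q\equiv\pm1\pmod8$ is prime, and $A_5$ is maximal under the hypotheses of Lemma \ref{C_2}; a unipotent element of $T$ has centraliser of order $q$; an involution of $T$ has centraliser $D_{q-1}$ when $q\equiv1\pmod4$ and $D_{q+1}$ when $q\equiv3\pmod4$; and a semisimple element of order greater than $2$ that lies in a maximal torus has centraliser equal to that torus, a cyclic group of order $(q\pm1)/(2,q-1)$.

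The rows whose value is an $m$-th power of a conjugacy class size are handled by Lemma \ref{centraliser}, which gives that $|\gamma^T|^m$ is a subdegree of $G(m,q)$ for every nontrivial $\gamma\in T$. Choosing $\gamma$ unipotent gives $|\gamma^T|=|T|/q=(q^2-1)/(2,q-1)$. Choosing $\gamma$ an involution gives $|\gamma^T|=\tfrac{1}{2}q(q+1)$ when $q\equiv1\pmod4$ and $|\gamma^T|=\tfrac{1}{2}q(q-1)$ when $q\equiv3\pmod4$. Choosing $\gamma$ to be an element of order greater than $2$ in a split, respectively non-split, maximal torus gives $|\gamma^T|=q(q+1)$, respectively $q(q-1)$; such a $\gamma$ exists precisely when the relevant torus is nontrivial and, for $q$ odd, contains an element of order greater than $2$, which accounts for the side conditions ``$q$ odd'', ``$q$ odd and $q\geqslant7$'' and ``$q$ even'' attached to these rows, and covers them for every $m\geqslant2$.

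For the two rows with value $(q+1)^m$ take $K=P_1$. When $m\geqslant3$, for any $s\in T\setminus P_1$ the intersection $P_1\cap P_1^s$ is a nontrivial (since $q\geqslant4$) cyclic, hence self-centralising, group, so Corollary \ref{int with conj has centre} yields that $(q+1)^m$ is a subdegree. When $m=2$ and $q\equiv1\pmod4$, pick via Lemma \ref{P_1 involution} an involution $s\notin P_1$ and set $t=(1,s)$, $D=P_1\wr S_2$; by Lemma \ref{dihedral P_1} the group $D^t\cap L$ is dihedral of order $q-1$, and because $(q-1)/2$ is even its centre is nontrivial and consists of diagonal elements $(\eta,\eta)$ with $\eta$ the involution of $P_1\cap P_1^s$, so Corollary \ref{max} gives that $(q+1)^2$ is a subdegree. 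The remaining rows come from a maximal $K$ that intersects a conjugate in $C_2$ or $C_2^2$: for $q$ even take $K=D_{2(q+1)}$ and combine Lemma \ref{dihedral intersect C_2} with Corollary \ref{C_2 int} to obtain $(\tfrac{1}{2}q(q-1))^m$; for $q\equiv\pm1\pmod8$ prime take $K\cong S_4$ and apply Lemma \ref{S_4}, using its $C_2^2$-conclusion with Corollary \ref{int with conj has centre} when $m\geqslant3$ and its $C_2$-conclusion with Corollary \ref{C_2 int} when $m=2$ and $q\geqslant17$; for the last row take $K\cong A_5$ and combine Lemma \ref{C_2} (which needs $q>11$) with Corollary \ref{C_2 int}. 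The finitely many remaining small cases --- $q=7$ with $m=2$ in the $S_4$ row (this is $G(2,7)$, whose subdegrees are recorded in \cite{CopSubPraeger}) and $q\in\{9,11\}$ in the $A_5$ row --- are verified directly, e.g.\ with \textsc{Magma}.

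I expect the $m=2$ entries to be the main difficulty. For $m\geqslant3$ the versatile Corollary \ref{int with conj has centre} applies as soon as some conjugate of $K$ meets $K$ in a nontrivial abelian group, so the work is almost entirely about the subgroup structure of $\PSL(2,q)$. For $m=2$ that corollary is unavailable, and one must instead either exhibit a conjugate meeting $K$ in a group isomorphic to \emph{precisely} $C_2$, or compute the centre of $D^t\cap L$ inside $L\cong T\times S_2$ by hand. This is exactly why the congruence $q\equiv1\pmod4$ is forced in the $(q+1)^2$ row --- it is what makes the dihedral group $D^t\cap L$ have nontrivial centre --- and why the $S_4$ and $A_5$ rows lean on the sharper ``$\cong C_2$'' statements of Lemmas \ref{S_4} and \ref{C_2}, whose proofs require lower bounds on $q$ and hence leave the small exceptions above to be checked individually.
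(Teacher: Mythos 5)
Your row-by-row plan coincides with the paper's proof for every row except the last one, and there it has a genuine gap. For the $A_5$ row you invoke Lemma \ref{C_2} together with Corollary \ref{C_2 int} for $q>11$ and propose to dispatch the remaining small cases $q\in\{9,11\}$ ``directly, e.g.\ with \textsc{Magma}''. But for a fixed such $q$ the parameter $m$ still ranges over all integers $\geqslant 2$, so there are infinitely many groups $G(m,q)$ to treat and no finite computation covers them. Nor is any of the structural corollaries available as a fallback: in $\PSL(2,11)$ two distinct conjugates of $K\cong A_5$ from the same class always intersect in a two-point stabiliser isomorphic to $S_3$ (and in $\PSL(2,9)\cong A_6$ in an $A_4$), both of which have trivial centre, so Corollaries \ref{t=1}, \ref{int with conj has centre} and \ref{C_2 int} all fail --- this is precisely why Lemma \ref{C_2} carries the hypothesis $q>11$. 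The paper closes the $q=11$ case with a bespoke argument: it finds $r,s\in T$ with the \emph{triple} intersection $K\cap K^r\cap K^s\cong C_2$, takes $t=(1,\ldots,1,r,r,s)$, shows $D^t\cap L\cong(K\cap K^r\cap K^s)\times(C_2\times S_{m-3})$, and applies Corollary \ref{max}; this handles every $m\geqslant 6$ uniformly, leaving only $m\in\{2,3,4,5\}$ to \textsc{Magma}. Some such uniform-in-$m$ construction is indispensable, and it is the one idea missing from your proposal. (You are right, incidentally, to flag $q=9$, which satisfies the row's hypotheses but is not explicitly addressed in the paper's own proof; the same triple-intersection device would be needed there as well.)

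Everything else matches the paper's argument: Lemma \ref{centraliser} applied to a unipotent element, an involution, and elements generating the two tori for Rows 2, 4 and 5; Corollary \ref{int with conj has centre} with $K=P_1$ for Row 1 when $m\geqslant 3$, and Lemma \ref{dihedral P_1} plus Corollary \ref{max} when $m=2$ and $q\equiv 1\pmod 4$ (where the evenness of $(q-1)/2$ makes the dihedral group $D^t\cap L$ have nontrivial centre, exactly as you say); Lemma \ref{dihedral intersect C_2} with Corollary \ref{C_2 int} for Row 3; and Lemma \ref{S_4} with Corollary \ref{int with conj has centre} (for $m\geqslant 3$) or Corollary \ref{C_2 int} (for $m=2$, $q\geqslant 17$) for the $S_4$ row, with $G(2,7)$ deferred to \cite{CopSubPraeger}. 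Those parts are correct as written.
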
 

\begin{proof} We begin by proving Row 1. When $m\geqslant 3$, this follows from letting $K=P_1$ in Corollary \ref{int with conj has centre} and noting that for any $s\in T\backslash P_1$, we have $P_1\cap P_1^s\cong C_{\frac{q-1}{(2,q-1)}}$, which has nontrivial centre. When $m=2$, set $D=(P_1\times P_1)\rtimes\langle\iota\rangle$ and note that $P_1$ is maximal in $T$. By Lemma \ref{dihedral P_1} there exists $t\in H$ such that $D^t\cap L\cong D_{q-1}$ and $\iota\notin D^t\cap L$. So $Z(D^t\cap L)\cong C_2$ contains an element $(\eta,\eta)\sigma$ with $\eta\neq 1$ and we can finish by Corollary \ref{max}.

Next, we prove Row 2. Let $\gamma\in T$ be an involution. By a \textsc{Magma} ~\cite{Magma} calculation and the fact that there is only one conjugacy class of involutions in $T$, if $q\equiv 1\pmod{4}$ then $\textbf{C}_T(\gamma)=D_{q-1}$ and if $q\equiv 3\pmod{4}$ then $\textbf{C}_T(\gamma)=D_{q+1}$. Hence Lemma \ref{centraliser} yields the subdegrees listed. 

Row 3 follows from letting $K=D_{2(q+1)}$ in Corollary \ref{C_2 int} and using Lemma \ref{dihedral intersect C_2} to guarantee the existence of $t\in T$ such that $K\cap K^t\cong C_2$.

Now consider Row 4. If $q$ is odd, then there exists $\gamma\in T$ of order $\frac{q+1}{2}$. If $q$ is odd and $q\geqslant 7$, then there exists $\gamma\in T$ of order $\frac{q-1}{2}$. Finally, if $q$ is even and $\epsilon\in\{-1,1\}$, then there exists $\gamma\in T$ of order $q+\epsilon$. In each of these cases it can be shown that $\textbf{C}_T(\gamma)=\langle\gamma\rangle$, so by Lemma \ref{centraliser} we are done.

To see why Row 5 holds, let $Z$ be the set of elements in $\text{SL}(2, q)$ that are in the centre of $\text{GL}(2, q)$ and set
\[ \gamma = \begin{bmatrix} 1&1\\ 0&1 \end{bmatrix} Z, \]
an element of $T$. Then $\textbf{C}_T(\gamma)$ has order $q$ and so Lemma \ref{centraliser} implies that $\left (\frac{|T|}{q}\right )^m=\left (\frac{q^2-1}{(2,q-1)}\right )^m$ is a subdegree of $G(m,q)$.

If $m\geqslant 3$, then Row 6 holds by letting $K=S_4$ in Corollary \ref{int with conj has centre}, and using Lemma \ref{S_4} which guarantees the existence of $t\in T$ such that $K\cap K^t\cong C_2^2$. It remains to consider the $m=2$ case. The $q=7$ case is done in \cite{CopSubPraeger}. If $q>7$, then the restrictions on $q$ imply that $q\geqslant 17$, and then by Lemma \ref{S_4} there exists $s\in T$ such that $K\cap K^s\cong C_2$. So by Corollary \ref{C_2 int} it follows that $|T:K|^2=\left (\frac{|T|}{24}\right )^2$ is a subdegree of $G(2,q)$.

If $q\geqslant 19$, then Row 7 follows from letting $K=A_5$ in Corollary \ref{C_2 int}, and using Lemma \ref{C_2} which guarantees the existence of $t\in T$ such that $K\cap K^t\cong C_2$. If $q=11$ and $m\geqslant 6$, then a \textsc{Magma} ~\cite{Magma} calculation finds $r,s\in T$ such that $K\cap K^r\cap K^s\cong C_2$. Then let $t=(1, \ldots, 1, r, r, s)=(a_1, \ldots, a_m)$. Note that neither of the elements $r, s$ or $sr^{-1}$ can be contained in $K$, as otherwise $K\cap K^r\cap K^s$ will be the intersection of two conjugates of $K$, which can never be $C_2$. Then
\begin{align*} D^t\cap L &= \{ (1, \ldots, 1, r^{-1}, r^{-1}, s^{-1})(k_1, \ldots, k_m)\sigma (1, \ldots, 1, r, r, s) \mid k_i\in K, \sigma\in S_m \} \cap L\\
&= \{ (1, \ldots, 1, r^{-1}, r^{-1}, s^{-1})(k_1, \ldots, k_m)(a_{1^{\sigma}}, \ldots, a_{m^{\sigma}})\sigma \mid k_i\in K, \sigma\in S_m \} \cap L. \end{align*}

Suppose $\sigma$ does not fix the set $\{1, \ldots, m-3\}$. Then there exist $i,j$ such that $1\leqslant i,j \leqslant m-3$ and $a_{i^{\sigma}} \neq a_{j^{\sigma}}$. For the product to be in $L$ we need $k_i a_{i^{\sigma}} = k_j a_{j^{\sigma}}$ and $a_{i^{\sigma}}a_{j^{\sigma}}^{-1} = k_i^{-1}k_j\in K$. But $a_{i^{\sigma}}a_{j^{\sigma}}^{-1}=r, s, sr^{-1}$ or the inverse of one of these, none of which lie in $K$, so the product cannot be contained in $L$.

Thus we need $\sigma$ to fix $\{1, \ldots, m-3\}$ setwise. Continuing this line of reasoning, we suppose $\sigma$ does not fix $\{m-2, m-1\}$ setwise. Then $a_{(m-2)^{\sigma}}\neq a_{(m-1)^{\sigma}}$. For the product to be in $L$ we need $r^{-1}k_{m-2}a_{(m-2)^{\sigma}}=r^{-1}k_{m-1}a_{(m-1)^{\sigma}}$ and $a_{(m-2)^{\sigma}}a_{(m-1)^{\sigma}}^{-1} = k_{m-2}^{-1}k_{m-1}\in K$. But this means that $sr^{-1}$ or $rs^{-1}=(sr^{-1})^{-1}$ lies in $K$, which is not true, so $\sigma$ must fix $\{m-2, m-1\}$ setwise. Let $P\cong C_2\times S_{m-3}$ be the subgroup of $S_m$ consisting of all permutations fixing $\{1, \ldots, m-3\}$ and $\{m-2, m-1\}$ setwise. Then 
\begin{align*} D^t \cap L &= \{ (k_1, \ldots, k_{m-3}, k_{m-2}^r, k_{m-1}^r, k_m^s)\sigma \mid k_i\in K, \sigma\in P \}\cap L\\
&\cong (K\cap K^r\cap K^s) \times P.\end{align*} 
Since $K\cap K^r\cap K^s=C_2$, it follows that $Z(D^t\cap L)$ contains an element $(\eta,\ldots,\eta)\sigma$ with $\eta\neq 1$. Thus Corollary \ref{max} implies $|T:K|^m = \left (\frac{|T|}{60}\right )^m$ is a subdegree of $G(m,q)$.

The $q=11$ and $m=2,3,4$ or $5$ cases can be done via computations in \textsc{Magma} ~\cite{Magma}, by finding some $t\in H$ such that $Z(D^t\cap L)$ contains a suitable element to apply Corollary \ref{max}. \end{proof} 

We can also make use of Lemma \ref{fn} by allowing $D$ not to be maximal in $H$. In this case we do not get an exact subdegree, but we do prove that there exists a subdegree dividing some number. This will prove to be useful in constructing an infinite family with nontrivial coprime subdegrees.

\begin{lemma}\label{D not maximal} Let $D$ be a subgroup of $H$. Suppose there exists $t\in H$ such that $(\eta, \ldots, \eta)\sigma\in Z(D^t \cap L)$ with $\eta\neq 1$. Then $G(m,T)$ has a nontrivial subdegree dividing $|H:D|$. \end{lemma}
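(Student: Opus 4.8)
The plan is to imitate the proof of Lemma \ref{|H:D|}, but without invoking maximality of $D$. First I would apply Lemma \ref{fn} to the given data $(D,t,\eta,\sigma)$: this produces a nonconstant function $g\in N$, defined by Equation \eqref{weird fn}, with the property that $D\leqslant\textbf{C}_H(g)$. Since $g\in N\setminus\{\mathrm{id}\}$ (it is nonconstant, hence not the identity function, by Lemma \ref{fn}), Lemma \ref{basic} tells us that $|H:H_g|=|H:\textbf{C}_H(g)|$ is a nontrivial subdegree of $G(m,T)$.

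The remaining step is purely arithmetic: from $D\leqslant\textbf{C}_H(g)\leqslant H$ we get, by Lagrange's theorem, that $|H:\textbf{C}_H(g)|$ divides $|H:D|$. Hence $G(m,T)$ has a nontrivial subdegree, namely $|H:\textbf{C}_H(g)|$, dividing $|H:D|$, as required. (One should note that $\textbf{C}_H(g)=H_g$ because the action of $H$ on $N$ is by automorphisms and fixing $g$ is the same as centralising it under this action; this was already used implicitly in Lemma \ref{|H:D|}, so it may be cited rather than re-proved.)

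There is essentially no obstacle here — the lemma is a weakening of Lemma \ref{|H:D|} obtained by dropping the maximality hypothesis, and the only thing lost is the ability to pin down $\textbf{C}_H(g)$ exactly as $D$; what survives is the divisibility relation. The one point to be careful about is to confirm that $g\neq\mathrm{id}$ so that Lemma \ref{basic} genuinely applies and yields a \emph{nontrivial} subdegree; this follows because $g$ is nonconstant while $\mathrm{id}$ is constant, which is exactly what Lemma \ref{fn} provides from the assumption $\eta\neq 1$.
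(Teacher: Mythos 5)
Your proposal is correct and follows essentially the same route as the paper: invoke Lemma \ref{fn} to get a nonconstant $g\in N$ with $D\leqslant\textbf{C}_H(g)$, note that the subdegree $|H:\textbf{C}_H(g)|$ divides $|H:D|$, and use nonconstancy of $g$ (hence $g\neq\mathrm{id}$) to conclude the subdegree is nontrivial. Your explicit remark identifying $\textbf{C}_H(g)$ with the stabiliser $H_g$ is a point the paper leaves implicit, but it changes nothing of substance.
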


\begin{proof} Define $g$ as in Equation \eqref{weird fn}. Then by Lemma \ref{fn}, $D\leqslant \textbf{C}_H(g)$, so the subdegree $|H:\textbf{C}_H(g)|$ divides $|H:D|$. To show that this subdegree is nontrivial, it suffices to show that $g\neq\text{id}$, which is true as by Lemma \ref{fn}, we have that $g$ is nonconstant. \end{proof} 

\begin{corollary}\label{P_1P_1 not maximal} The group $G(2,q)$ has a nontrivial subdegree dividing $2(q+1)^2$. \end{corollary}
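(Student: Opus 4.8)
The plan is to apply Lemma~\ref{D not maximal} with a \emph{non-maximal} subgroup $D$ of $H=T\wr S_2$ of index exactly $2(q+1)^2$. Since $|H|=2|T|^2$ and $|T:P_1|=q+1$, the natural candidate is $D=P_1\times P_1\leqslant T\times T\leqslant H$, for which $|H:D|=2(|T:P_1|)^2=2(q+1)^2$. Note that $D$ is a proper subgroup of $P_1\wr S_2$, so we genuinely need Lemma~\ref{D not maximal} here rather than the maximal version Lemma~\ref{|H:D|}.

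First I would fix any $s\in T\setminus P_1$ and set $t=(1,s)\in H$, so that $D^t=P_1\times P_1^s$, which lies inside $T\times T$. Writing $L=M\cup M\iota$ with $M=\{(x,x)\mid x\in T\}$ the diagonal, no element of $M\iota$ lies in $T\times T$, and hence $D^t\cap L=D^t\cap M=\{(x,x)\mid x\in P_1\cap P_1^s\}\cong P_1\cap P_1^s$. Since $P_1$ is the stabiliser of a point of the projective line and $s\notin P_1$, the subgroup $P_1\cap P_1^s$ is the pointwise stabiliser of two distinct points, a cyclic group of order $(q-1)/(2,q-1)$, which is nontrivial because $q\geqslant 4$. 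Being cyclic, it is abelian, so $Z(D^t\cap L)=D^t\cap L$ contains the element $(\eta,\eta)$ (with trivial $S_2$-component, i.e.\ $\sigma=1$) for any nontrivial $\eta\in P_1\cap P_1^s$. Lemma~\ref{D not maximal} then produces a nontrivial subdegree of $G(2,q)$ dividing $|H:D|=2(q+1)^2$, as required.

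The only point needing care is the identification $D^t\cap L\cong P_1\cap P_1^s$ — in particular verifying that no element of $L$ involving the transposition $\iota$ lands in $D^t$, and confirming that $P_1\cap P_1^s$ is a nontrivial (cyclic) group for every $q\geqslant 4$ — but this is routine, and I do not expect a genuine obstacle.
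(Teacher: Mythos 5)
Your proof is correct and follows essentially the same route as the paper: take $D=P_1\times P_1$, conjugate by $t=(1,s)$ with $s\notin P_1$, observe that $D^t\cap L\cong P_1\cap P_1^s\cong C_{(q-1)/(2,q-1)}$ is a nontrivial abelian group sitting inside $T^2$, and apply Lemma~\ref{D not maximal}. The extra care you take in checking that no element of $L$ involving $\iota$ lies in $D^t$ is exactly the content of the paper's remark that $D^t\cap L\leqslant T^2$.
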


\begin{proof} Let $D=P_1\times P_1$ and let $t=(1,s)$ for some $s\notin P_1$. Then $D^t\cap L\cong P_1\cap P_1^s\cong C_{\frac{q-1}{(2,q-1)}}$ has nontrivial centre. Since $D^t\cap L\leqslant T^2$, it follows that the conditions in Lemma \ref{D not maximal} have been satisfied, so $G(2,q)$ has a subdegree dividing $|H:D|=2(q+1)^2$. \end{proof} 

We now present some infinite families of primitive TW groups with nontrivial coprime subdegrees.

\begin{theorem}\label{infinite families} Each row of Table \ref{coprime subdegrees} gives a quadruple $(m,q,r,d)$ such that $r$ and $d$ are nontrivial coprime subdegrees of $G(m,q)$. 

\begin{table}
\caption {Nontrivial coprime subdegrees of $G(m,q)$} \label{coprime subdegrees} 
\begin{center}
\begin{tabular}{ |c|c|c|c| } 
 \hline
 $m$ & $q$ & $r$ & $d$ \\ 
 \hline\hline
$2$ & $\equiv 3\pmod{4}$ & $(\frac{1}{2}q(q-1))^2$ & divides $2(q+1)^2$\\    
 \hline                      
$\geqslant 3$ & $\equiv 3\pmod{4}$ & $(\frac{1}{2}q(q-1))^m$ & $(q+1)^m$\\ 
 \hline
$\geqslant 3$ & even & $(\frac{1}{2}q(q-1))^m$ or $(q(q-1))^m$ & $(q+1)^m$ \\
 \hline
& $29$ & $30^m$ & $203^m$ \\
 \hline
& $7$ & $7^m$ & $24^m$ \\
 \hline
& $11$ & $11^m$ & $60^m$\\
 \hline
\end{tabular}
\end{center}
\end{table}
\end{theorem}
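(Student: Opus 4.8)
The plan is to go through Table \ref{coprime subdegrees} row by row, in each case exhibiting the claimed subdegrees via the machinery already developed in Section \ref{TW} and Section \ref{T=PSL}, and then checking coprimality of the two integers by elementary number theory. For the generic rows (the first three), both subdegrees will come from Theorem \ref{subdegrees} (i.e.\ Table \ref{list}) or from Corollary \ref{P_1P_1 not maximal}, so the only real work is the arithmetic; for the three sporadic rows ($q\in\{7,11,29\}$) I would invoke the constructions for the specific maximal subgroups of $\PSL(2,q)$ and then finish with a \textsc{Magma} verification where needed.

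Concretely: for Row 1, $q\equiv 3\pmod 4$, $m=2$, the subdegree $(\tfrac12 q(q-1))^2$ is Row 3 (second sub-row, $q$ odd with $q\equiv 3\pmod 4$) of Table \ref{list}, and the divisor of $2(q+1)^2$ is exactly Corollary \ref{P_1P_1 not maximal}. For Rows 2 and 3, $m\geqslant 3$: the subdegree $(q+1)^m$ is Row 1 of Table \ref{list}; the subdegree $(\tfrac12 q(q-1))^m$ (for $q\equiv 3\pmod 4$, from Row 3 of Table \ref{list}, or for $q$ even, from Row 3 of Table \ref{list}) and $(q(q-1))^m$ (for $q$ even, from Row 4 of Table \ref{list}) are all available. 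For Row 4, $q=29$: here $\PSL(2,29)$ has $A_5$ as a maximal subgroup of index $203$ and $|T:D_{28}|$-type considerations give a subdegree whose relevant prime content is $30^m$; I would cite the $A_5$ construction (Corollary \ref{C_2 int} together with Lemma \ref{C_2}, noting $29\equiv -1\pmod{10}$) for $203^m = (|T|/60)^m$, and Row 4 of Table \ref{list} with $\gamma$ of order $\frac{q+1}{2}=15$ for $30^m=(q(q-1)/2 \text{ or } q(q+1))^m$ — in fact $\textbf{C}_T(\gamma)$ for $\gamma$ of order $15$ has order $15$, giving subdegree $(|T|/15)^m = (2\cdot 29\cdot 7)^m$; I would instead take the subdegree $30^m$ by finding a suitable element; this needs a short check. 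For Rows 5 and 6, $q=7$ and $q=11$: the $24^m$ and $60^m$ come from the $S_4$ and $A_5$ constructions in Theorem \ref{subdegrees} (the $q=7,m=2$ case is in \cite{CopSubPraeger}, the $q=11$ case via the argument already given in the proof of Theorem \ref{subdegrees}), and $7^m$ and $11^m$ come from Row 4 of Table \ref{list} with $\gamma$ of order $7$ and $11$ respectively — wait, for $q=7$ one needs a subdegree with $7$-part only; I would take $\gamma$ of order $q=7$ (an element of $P_1$) with $\textbf{C}_T(\gamma)$ of order $q$, giving Row 5 of Table \ref{list}: $(\frac{q^2-1}{(2,q-1)})^m = (\frac{48}{2})^m = 24^m$ — that's the wrong one; instead $7^m$ should be obtained from a different construction, presumably via Lemma \ref{|H:D|} with $D$ a point stabiliser giving index $q^m$, or directly. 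I would double-check which lemma yields exactly $q^m$; likely one uses $K=P_1$ with a $t$ so that $Z(D^t\cap L)$ is nontrivial but $D^t\cap L$ is small, or a \textsc{Magma} computation as already invoked for small $q$.

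The coprimality checks are then routine: $\gcd(\tfrac12 q(q-1),\,2(q+1)) $ divides $\gcd(q(q-1), (q+1))\cdot\text{(small }2\text{-power)}$, and since $\gcd(q,q+1)=1$, $\gcd(q-1,q+1)\mid 2$, while the parity hypotheses ($q\equiv 3\pmod 4$, or $q$ even) control the $2$-adic valuations so that after raising to the $m$-th power the two subdegrees share no common prime; similarly $\gcd(7,24)=\gcd(11,60)=\gcd(30,203)=1$ are immediate. The main obstacle I anticipate is not any single row but getting the bookkeeping exactly right for the sporadic cases — in particular, pinning down for $q\in\{7,11,29\}$ which element orders or which maximal subgroups yield subdegrees whose prime content is precisely $7$, $11$, $30$ (rather than the larger values $24$, $60$, $203$ that come out most naturally), since a coprime pair requires one subdegree supported only on the "new" primes. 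I expect this to be settled by the same type of centraliser/intersection argument as in Theorem \ref{subdegrees}, supplemented by the \textsc{Magma} \cite{Magma} calculations already referenced there for these small $q$.
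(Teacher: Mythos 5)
Your overall strategy is exactly the paper's: the proof of Theorem \ref{infinite families} is a one-line citation of the data in Table \ref{list} together with Corollary \ref{P_1P_1 not maximal} for the divisor of $2(q+1)^2$ in Row 1, plus routine coprimality arithmetic. Your treatment of the first three rows is correct and matches the paper.

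However, your handling of the sporadic rows $q\in\{7,11,29\}$ contains a genuine unresolved gap: you explicitly admit not knowing where $7^m$, $11^m$ and $30^m$ come from, and you have the attributions backwards. The point is that \emph{all six} numbers in those rows are already entries of Table \ref{list}; no new construction and no further \textsc{Magma} computation is needed. For $q=7$ one has $|T|=168$, so Row 6 of Table \ref{list} (the $S_4$ construction) gives $(|T|/24)^m=7^m$ --- not $24^m$ --- while Row 5 (the unipotent centraliser of order $q$) gives $\bigl(\tfrac{q^2-1}{2}\bigr)^m=24^m$; and $\gcd(7,24)=1$. For $q=11$ one has $|T|=660$, so Row 7 (the $A_5$ construction, $11\equiv 1\pmod{10}$) gives $(|T|/60)^m=11^m$, while Row 5 gives $60^m$; and $\gcd(11,60)=1$. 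For $q=29$ one has $|T|=12180$, so Row 7 gives $(|T|/60)^m=203^m$, while $30^m=(q+1)^m$ is Row 1 of Table \ref{list}: for $m\geqslant 3$ this holds for all $q$, and for $m=2$ it holds because $29\equiv 1\pmod 4$; and $\gcd(30,203)=\gcd(2\cdot 3\cdot 5,\,7\cdot 29)=1$. Your guesses (an element of order $15$ for $q=29$, or Row 4/Row 5 for the $q$-part when $q=7,11$) do not produce the required numbers, so as written the proposal does not establish the last three rows of the table; with the corrected citations above it does.
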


\begin{proof} This follows from using the data in Lemma \ref{subdegrees}, apart from using Corollary \ref{P_1P_1 not maximal} to construct the subdegree dividing $2(q+1)^2$ in Row 1. \end{proof}

\section{Characterisation results}\label{any m at least 2}

Recall the action of $H$ on $N=\{f\in T^H\mid f(z\ell)=f(z)^{\phi(\ell)}\quad\forall z\in H, \ell\in L\}$, where $H=T\wr S_m$, $L=\{(x,\ldots, x)\sigma\mid x\in T, \sigma\in S_m\}$, and $\phi((x,\ldots,x)\sigma)=i_x$ for all $(x,\ldots,x)\sigma\in L$ ($i_x$ denotes the automorphism induced on $T$ by conjugation by $x$). We will investigate this action in more detail. Also, define the projection maps $\pi_i:T^m\rightarrow T$ for all $1\leqslant i\leqslant m$ by $\pi_i((t_1,\ldots,t_m))=t_i$ for all $t_1,\ldots,t_m\in T$. 

\begin{lemma}\label{equivalent N} Let $f\in T^H$. Then $f\in N$ if and only if
\[ f((t_1,\ldots,t_m)\sigma)=[f((t_1t_m^{-1},\ldots,t_{m-1}t_m^{-1},1))]^{t_m} \]
for all $t_1,\ldots,t_m\in T$ and $\sigma\in S_m$. \end{lemma}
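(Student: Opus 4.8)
The plan is to unwind the two defining conditions of $N$ --- invariance under right multiplication by $L$ --- and express everything in terms of the value of $f$ on the ``base'' set of tuples $(t_1,\dots,t_{m-1},1)$, i.e. tuples with last coordinate $1$ and trivial permutation part. First I would observe that an arbitrary element of $H=T\wr S_m$ is $(t_1,\dots,t_m)\sigma$, and I want to peel off two pieces of right-multiplication data that both lie in $L$: the diagonal element $(t_m,\dots,t_m)\in L$ and the permutation $\sigma\in L$ (viewing $\sigma$ as $(1,\dots,1)\sigma\in L$). Concretely, I would factor
\[
(t_1,\dots,t_m)\sigma \;=\; (t_1t_m^{-1},\dots,t_{m-1}t_m^{-1},1)\,\sigma\,(t_m,\dots,t_m),
\]
after checking that conjugating the diagonal $(t_m,\dots,t_m)$ past $\sigma$ does nothing (diagonals are $S_m$-fixed), so this equals $(t_1t_m^{-1},\dots,t_{m-1}t_m^{-1},1)(t_m,\dots,t_m)\sigma$ rearranged appropriately; the key point is that $(t_1,\dots,t_m)\sigma$ differs from $(t_1t_m^{-1},\dots,t_{m-1}t_m^{-1},1)$ by right multiplication by the element $\sigma(t_m,\dots,t_m)\in L$ (or $(t_m,\dots,t_m)\sigma$, whichever the bookkeeping yields).

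For the forward direction, assume $f\in N$. Applying the defining relation $f(z\ell)=f(z)^{\phi(\ell)}$ with $z=(t_1t_m^{-1},\dots,t_{m-1}t_m^{-1},1)$ and $\ell$ the element of $L$ just identified, and using $\phi((t_m,\dots,t_m)\sigma)=i_{t_m}$ (so that $f(z)^{\phi(\ell)} = f(z)^{t_m}$), gives exactly the displayed identity. For the converse, suppose $f$ satisfies the displayed formula for all tuples and all $\sigma$; I must verify $f(z\ell)=f(z)^{\phi(\ell)}$ for all $z\in H$ and $\ell\in L$. Write $z=(t_1,\dots,t_m)\sigma$ and $\ell=(x,\dots,x)\tau$. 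Then $z\ell=(t_1x,\dots,t_mx)\,\sigma\tau$ (again using that the diagonal commutes with permutations), and I apply the assumed formula to both $z$ and $z\ell$: the right-hand side for $z$ is $f(t_1t_m^{-1},\dots,t_{m-1}t_m^{-1},1)^{t_m}$ and for $z\ell$ is $f((t_1x)(t_mx)^{-1},\dots,1)^{t_mx} = f(t_1t_m^{-1},\dots,t_{m-1}t_m^{-1},1)^{t_mx}$, since $(t_ix)(t_mx)^{-1}=t_it_m^{-1}$. Comparing, $f(z\ell)=f(z)^{x}=f(z)^{i_x}=f(z)^{\phi(\ell)}$, as required. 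The one routine sanity check throughout is the semidirect-product multiplication rule in $T\wr S_m$ and the fact that a constant (diagonal) tuple is fixed by every $\sigma\in S_m$; I would state this once and reuse it.

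The main obstacle --- really the only thing requiring care --- is getting the bookkeeping of the wreath-product multiplication exactly right: whether $\sigma$ acts on the left or right of the coordinate tuple under the chosen conventions, and consequently whether the correcting element of $L$ is $(t_m,\dots,t_m)\sigma$ or $\sigma(t_m,\dots,t_m)$ and on which side it multiplies. Since the paper's convention (from Lemma \ref{will be used later} and the action $f^x(z)=f(xz)$) fixes these choices, I would pin down the product formula $(a,\sigma)(b,\tau)=(a\cdot\sigma(b),\sigma\tau)$ or its variant at the outset and then the rest is substitution. No deep idea is needed beyond this normalization of group elements by their last coordinate and permutation part.
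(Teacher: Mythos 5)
Your proposal is correct and follows essentially the same route as the paper: both directions are handled by factoring $(t_1,\dots,t_m)\sigma$ as $(t_1t_m^{-1},\dots,t_{m-1}t_m^{-1},1)\cdot(t_m,\dots,t_m)\sigma$ with the correcting factor $\ell=(t_m,\dots,t_m)\sigma\in L$ and $\phi(\ell)=i_{t_m}$, exactly as in the paper's proof. Your attention to the wreath-product bookkeeping (constant tuples commuting with the permutation part) is the same routine verification the paper performs implicitly.
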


\begin{proof} Let $f\in N$. For all $t_1,\ldots,t_m\in T$ and $\sigma\in S_m$ with $z=(t_1t_m^{-1},\ldots,t_{m-1}t_m^{-1},1)$ and $\ell=(t_m,\ldots,t_m)\sigma$ we have
\begin{align*} f((t_1,\ldots,t_m)\sigma)&=f(z\ell)\\&=f(z)^{\phi(\ell)}\\&=[f((t_1t_m^{-1},\ldots,t_{m-1}t_m^{-1},1))]^{t_m}. \end{align*}

Conversely, suppose that $f((t_1,\ldots,t_m)\sigma)=[f((t_1t_m^{-1},\ldots,t_{m-1}t_m^{-1},1))]^{t_m}$ holds for all $t_1,\ldots,t_m\in T$ and $\sigma\in S_m$. Then for any $z=(a_1,\ldots,a_m)\sigma_1\in H$ and $\ell=(t,\ldots,t)\sigma_2$ we have
\begin{align*} f(z\ell)=f((a_1t,\ldots,a_mt)\sigma_1\sigma_2)&=[f((a_1a_m^{-1},\ldots,a_{m-1}a_m^{-1},1))]^{a_mt}\\&=[f((a_1,\ldots,a_m)\sigma_1)]^t\\&=f(z)^{\phi(\ell)}, \end{align*}
so $f\in N$. \end{proof}

\begin{lemma}\label{no T^m} If for some $f\in N$ we have $T^m\leqslant H_f$, then $f=\text{id}$. \end{lemma}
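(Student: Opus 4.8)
The plan is to use the characterisation of $N$ from Lemma~\ref{equivalent N} together with the action of $H$ on $T^H$. Suppose $f \in N$ and $T^m \leqslant H_f$, i.e.\ $f$ is fixed by every element of the base group $T^m$. Recall that the action is $f^h(z) = f(hz)$, so $T^m \leqslant H_f$ means $f(az) = f(z)$ for all $a \in T^m$ and all $z \in H$. In particular, for any $z = (t_1,\ldots,t_m)\sigma \in H$, choosing $a = (t_1,\ldots,t_m)^{-1} \in T^m$ gives $f((t_1,\ldots,t_m)\sigma) = f(\sigma)$, so $f$ depends only on the $S_m$-component of its argument.

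Next I would pin down the values $f(\sigma)$ for $\sigma \in S_m$. Apply Lemma~\ref{equivalent N}: with all $t_i = 1$ we get $f(\sigma) = f((1,\ldots,1)\sigma) = [f((1,\ldots,1,1))]^{1} = f(\mathrm{id}_H)$, wait — more carefully, for $z = (t_1,\ldots,t_m)\sigma$ the lemma says $f(z) = [f((t_1t_m^{-1},\ldots,t_{m-1}t_m^{-1},1))]^{t_m}$; taking $t_1 = \cdots = t_m = 1$ yields $f(\sigma) = [f((1,\ldots,1,1))]^1 = f(1_H)$ for every $\sigma$. Combined with the previous paragraph, $f$ is constant on all of $H$, equal to $f(1_H)$.

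Finally I would invoke the defining property of $N$ one more time, exactly as in the last part of the proof of Lemma~\ref{fn}: since $f$ is constant, for every $\ell \in L$ we have $f(1_H) = f(\ell) = f(1_H \cdot \ell) = f(1_H)^{\phi(\ell)}$. Writing $\ell = (x,\ldots,x)\sigma$ so that $\phi(\ell) = i_x$, this says $f(1_H)$ is centralised by $i_x$ for every $x \in T$; since $\mathrm{Inn}(T) \leqslant \mathrm{Im}\,\phi$ and $T$ is nonabelian simple with trivial centre, we conclude $f(1_H) \in Z(T) = 1$. Hence $f$ is the constant function $\mathrm{id}$, as required.

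I do not anticipate a serious obstacle here: the argument is essentially a two-line reduction (fixed by the base group $\Rightarrow$ depends only on the $S_m$-part; then Lemma~\ref{equivalent N} $\Rightarrow$ constant) followed by the standard centre-triviality argument already used in Lemma~\ref{fn}. The only point requiring a little care is getting the bookkeeping of the $T^m$-action right — remembering that $h$ acts on the \emph{left} of the argument $z$, so that translating away the $T^m$-part of $z$ is legitimate precisely because $T^m \trianglelefteq H$ and $T^m \leqslant H_f$.
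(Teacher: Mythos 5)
Your proof is correct and follows essentially the same route as the paper: reduce to showing $f$ is constant using the $T^m$-invariance together with Lemma~\ref{equivalent N}, then kill the constant value via $Z(T)=1$. The only cosmetic difference is that the paper packages the computation through an auxiliary function $\alpha:T^{m-1}\to T$ and extracts the relation $y=y^{t_m}$ from the $H_f$-condition, whereas you get the same conclusion from the defining equivariance of $N$ under right multiplication by $L$; both are valid.
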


\begin{proof} Define a function $\alpha:T^{m-1}\rightarrow T$ by $\alpha(x_1,\ldots,x_{m-1})=f((x_1,\ldots,x_{m-1},1))$ for all $x_i\in T$. We will use Lemma \ref{equivalent N} in the following series of manipulations. We have $(t_1,\ldots,t_m)\in H_f$ for all $t_1,\ldots,t_m\in T$ if and only if for all $t_i,a_i\in T$ and $\sigma\in S_m$ we have
\begin{align*} &f^{(t_1,\ldots,t_m)}((a_1,\ldots,a_m)\sigma)=f((a_1,\ldots,a_m)\sigma)\\
&\iff f((t_1a_1,\ldots,t_ma_m)\sigma)=f((a_1,\ldots,a_m)\sigma)\\
&\iff [f((t_1a_1a_m^{-1}t_m^{-1},\ldots,t_{m-1}a_{m-1}a_m^{-1}t_m^{-1},1))]^{t_m}=f((a_1a_m^{-1},\ldots,a_{m-1}a_m^{-1},1)).\end{align*}
This is equivalent to
\[ [\alpha(t_1s_1t_m^{-1},\ldots,t_{m-1}s_{m-1}t_m^{-1})]^{t_m}=\alpha((s_1,\ldots,s_{m-1}))\quad\forall t_i,s_i\in T.\] 
By setting $t_m$ and the $s_i$ to be $1$, and varying $t_1,\ldots,t_{m-1}$, we see that $\alpha$ is a constant function. Suppose the value of $\alpha$ is always equal to $y\in T$. Then $y=y^{t_m}$ for all $t_m\in T$, so $y\in Z(T)=1$. Thus $\alpha(x_1,\ldots,x_{m-1})=1$ for all $x_i\in T$. By Lemma \ref{equivalent N}, we have $f=\text{id}$. \end{proof}

\begin{lemma}\label{T^m not in X} Let $X$ be a subgroup of $H=T\wr S_m$ that does not contain $T^m$. Then $|H:X|$ is divisible by $|T:K|$ for some maximal subgroup $K$ of $T$. \end{lemma}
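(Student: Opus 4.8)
The plan is to exploit the structure of $H = T \wr S_m$ acting on the $m$ coordinates together with the maximality argument behind Corollary \ref{max}. Consider the natural projection $\rho \colon H \to S_m$ with kernel $T^m$. Since $X$ does not contain $T^m$, the intersection $X \cap T^m$ is a proper subgroup of $T^m = T \times \cdots \times T$. The idea is to show that this forces $|H:X|$ to be divisible by $|T:K|$ for a maximal subgroup $K$ of $T$, by producing an appropriate overgroup of $X$ inside $H$ of the shape $K \wr S_m$ (or a suitable block-diagonal variant) and then invoking the index formula.

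The key steps, in order, are as follows. First I would set $Y = X \cap T^m \lneq T^m$ and let $\pi_i \colon T^m \to T$ be the coordinate projections. If $Y$ projects onto $T$ in every coordinate, a standard subdirect-product argument (Goursat, using simplicity of $T$) shows $Y$ is a ``diagonal-type'' subgroup; in particular $Y$ cannot be normalised by a group inducing all of... — more carefully, one shows that some coordinate projection $\pi_i(Y)$ must be a proper subgroup of $T$, \emph{or} $Y$ is a full diagonal and we handle that separately. Actually the cleanest route: since $Y \lneq T^m$, there is an index $i$ and an element of $T^m$ not in $Y X$-translated appropriately; concretely, pick $K$ to be a maximal subgroup of $T$ containing $\pi_1(X \cap T^m)$ if the latter is proper, and otherwise observe that $X$ itself must be contained in a conjugate of $(K_1 \times \cdots \times K_m) \rtimes (\text{stabiliser in } S_m)$ for suitable proper subgroups $K_i$. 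Then I would bound: $|H:X| = |H : X T^m| \cdot |X T^m : X| = |S_m : \rho(X)| \cdot |T^m : X \cap T^m|$, and since $X \cap T^m \lneq T^m$ we have $|T^m : X \cap T^m| \geqslant |T^m : Z|$ where $Z$ is a maximal subgroup of $T^m$ containing $X \cap T^m$; every maximal subgroup of $T^m$ has index $|T:K|$ for some maximal $K \leqslant T$, so $|T : K|$ divides $|T^m : X \cap T^m|$, hence divides $|H:X|$.

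So the crux is the elementary but slightly fiddly fact that every maximal subgroup $Z$ of $T^m = T \times \cdots \times T$ (with $T$ nonabelian simple) has the form $T \times \cdots \times K \times \cdots \times T$ with $K$ maximal in $T$, so $|T^m : Z| = |T : K|$. This follows because a maximal subgroup either contains $\prod_{j \ne i} T_j$ for all but one $i$ — giving the stated form — or projects onto each factor and is subdirect, but a proper subdirect subgroup of $T^m$ (via Goursat / simplicity of $T$) has index $|T|^k$ for some $k \geqslant 1$, which still contains a proper maximal subgroup on one factor; tidying this dichotomy is where the real work lies. I expect \textbf{this classification of maximal subgroups of $T^m$ and the reduction of the subdirect case} to be the main obstacle, though it is standard; once it is in hand the index divisibility is immediate from the factorisation $|H:X| = |S_m : \rho(X)| \cdot |T^m : X \cap T^m|$ together with $X \cap T^m \lneq T^m$.
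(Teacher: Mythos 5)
Your overall strategy is the same as the paper's: factor $|H:X|=|S_m:\rho(X)|\cdot|T^m:X\cap T^m|$ (the paper writes this as $|H:X|=|H:T^m|\,|T^m:X\cap T^m|/|X:X\cap T^m|$, which is the same decomposition), reduce to showing that every proper subgroup $Y$ of $T^m$ has index divisible by some $|T:K|$, and split into the case where some projection $\pi_i(Y)$ is proper (so $Y\leqslant K\times T\times\cdots\times T$ and $|T:K|$ divides the index) and the subdirect case (so $Y\cong T^k$ and $|T^m:Y|=|T|^{m-k}$). Two points need repair, though both are minor. First, your ``crux fact'' as literally stated --- that every maximal subgroup of $T^m$ has the form $T\times\cdots\times K\times\cdots\times T$ and index $|T:K|$ --- is false: the diagonal-type maximal subgroups (e.g. $\{(t,t^{\sigma})\mid t\in T\}\leqslant T^2$) are subdirect and have index $|T|$, which is never of the form $|T:K|$ since maximal subgroups of a nonabelian simple group are nontrivial. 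You do acknowledge the subdirect case at the end, but you close it with ``still contains a proper maximal subgroup on one factor,'' which is not true of a full diagonal and in any case is not the relevant conclusion; the correct one-line finish is simply that $|T|^k$ is divisible by $|T|$ and hence by $|T:K|$ for \emph{any} maximal $K\leqslant T$, because $|T:K|$ divides $|T|$. The paper avoids classifying maximal subgroups of $T^m$ altogether by applying the projection dichotomy directly to $Y=X\cap T^m$ rather than to a maximal overgroup of it, which is slightly cleaner; with the two fixes above your argument goes through and is essentially the same proof.
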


\begin{proof} Since $T^m$ is normal in $H$, it follows that $|X:X\cap T^m|$ divides $|H:T^m|$. Now
\[ |H:X|=\frac{|H:X\cap T^m|}{|X:X\cap T^m|}=\frac{|H:T^m||T^m:X\cap T^m|}{|X:X\cap T^m|}, \]
so $|H:X|$ is divisible by $|T^m:X\cap T^m|$. It suffices to show that for any proper subgroup $Y$ of $T^m$, we have that $|T^m:Y|$ is divisible by $|T:K|$ for some maximal subgroup $K$ of $T$. 

Suppose there exists $i$ such that $\pi_i(Y)<T$, and assume without loss of generality that $\pi_1(Y)<T$. Then there exists a maximal subgroup $K$ of $T$ such that $\pi_1(Y)\leqslant K$. So $Y\leqslant K\times T\times\cdots\times T$, and $|T^m:Y|$ is divisible by $|T^m:K\times T\times\cdots\times T|=|T:K|$. Thus we are done in this case. If $\pi_i(Y)=T$ for all $i$, then $Y\cong T^k$ for some $k<m$. Then $|T^m:Y|=|T|^{m-k}$ is divisible by $|T|$ and thus $|T:K|$ for any maximal subgroup $K$ of $T$. \end{proof} 

\begin{corollary}\label{divisible by |T:K|} Let $f\in N\backslash\{\text{id}\}$. Then $|H:H_f|$ is divisible by $|T:K|$ for some maximal subgroup $K$ of $T$. \end{corollary}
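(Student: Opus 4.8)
The plan is to combine Lemma \ref{no T^m} with Lemma \ref{T^m not in X}, applied to the subgroup $X = H_f$. First I would observe that since $f \in N \backslash \{\text{id}\}$, Lemma \ref{no T^m} tells us that $T^m$ is not contained in $H_f$ (indeed, if $T^m \leqslant H_f$ then $f = \text{id}$, contrary to assumption). This is the only place the hypothesis $f \neq \text{id}$ is used, and it is really the substantive input — everything else is the combinatorial divisibility argument already packaged in Lemma \ref{T^m not in X}.

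Having established that $T^m \not\leqslant H_f$, I would simply apply Lemma \ref{T^m not in X} with $X = H_f$: this immediately yields that $|H : H_f|$ is divisible by $|T:K|$ for some maximal subgroup $K$ of $T$. That completes the proof. In LaTeX the proof is essentially two sentences:

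\begin{proof} Since $f \neq \text{id}$, Lemma \ref{no T^m} implies that $T^m$ is not contained in $H_f$. Applying Lemma \ref{T^m not in X} with $X = H_f$, we conclude that $|H:H_f|$ is divisible by $|T:K|$ for some maximal subgroup $K$ of $T$. \end{proof}

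There is no real obstacle here: the corollary is a routine synthesis of the two preceding lemmas, and the only thing to be careful about is correctly invoking the contrapositive of Lemma \ref{no T^m} (which says $T^m \leqslant H_f \implies f = \text{id}$, hence $f \neq \text{id} \implies T^m \not\leqslant H_f$). The point of recording it as a separate corollary is presumably that, together with Lemma \ref{basic}, it shows every nontrivial subdegree of $G(m,T)$ is divisible by $|T:K|$ for some maximal $K$, which is the form in which it will be used later to rule out pairs of coprime subdegrees.
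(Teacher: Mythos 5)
Your proof is correct and is exactly the argument the paper gives: the contrapositive of Lemma \ref{no T^m} shows $T^m\not\leqslant H_f$, and Lemma \ref{T^m not in X} applied to $X=H_f$ finishes it. No issues.
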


\begin{proof} This follows immediately from Lemma \ref{no T^m} and Lemma \ref{T^m not in X}. \end{proof} 

In the $m\geqslant 3$ case, the results we have from the previous sections are enough to determine all $q$ such that $G(m,q)$ has nontrivial coprime subdegrees. 

\begin{theorem} Suppose $m\geqslant 3$. Then $G(m,q)$ has nontrivial coprime subdegrees if and only if either $q$ is even or $q\equiv 3\pmod{4}$ or $q=29$. \end{theorem}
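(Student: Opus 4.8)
The plan is to prove the two implications separately, exploiting the fact that Corollary \ref{divisible by |T:K|} forces every nontrivial subdegree to be divisible by $|T:K|$ for some maximal subgroup $K$ of $T=\PSL(2,q)$. First I would dispose of the "if" direction using Theorem \ref{infinite families}: when $q\equiv 3\pmod 4$ the pair $((\tfrac12 q(q-1))^m,(q+1)^m)$ from Row 2 of Table \ref{coprime subdegrees} works, since $q(q-1)/2$ and $q+1$ are coprime (any common prime divides $q(q-1)$ and $q+1$, hence divides $2q$ or $q-1$ and $q+1$, forcing $p\mid 2$, but $q+1$ is even so one must check the powers of $2$ cancel — $q\equiv3\pmod4$ makes $q(q-1)/2$ odd); when $q$ is even, Row 3 gives $((\tfrac12 q(q-1))^m,(q+1)^m)$ with $q$ even and $q+1$ odd, so the $2$-parts are disjoint and the odd parts are coprime because $\gcd(q-1,q+1)=1$; when $q=29$, Row 4 gives $(30^m,203^m)$ with $203=7\cdot 29$ coprime to $30$.

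For the "only if" direction, I would assume $q\equiv 1\pmod 4$ and $q\neq 29$ and show no coprime pair exists. By Corollary \ref{divisible by |T:K|}, any two coprime nontrivial subdegrees $d_1,d_2$ are divisible by $|T:K_1|$, $|T:K_2|$ respectively for maximal subgroups $K_1,K_2$ of $T$; since $d_1,d_2$ are coprime, so are $|T:K_1|$ and $|T:K_2|$, which means $T=K_1K_2$ is a maximal coprime factorisation of $T$. The key step is therefore to invoke the classification of maximal coprime factorisations of $\PSL(2,q)$ (available via \cite{CopSubPraeger}): one should find that for $q\equiv 1\pmod 4$, $q\neq 29$, the group $\PSL(2,q)$ has \emph{no} maximal coprime factorisation at all — the orders of the maximal subgroups ($P_1$ of index $q+1$; dihedral groups $D_{q-1}$, $D_{q+1}$; and the small exceptional subgroups $A_4,S_4,A_5$ which only occur for special $q$) always share a common prime factor pairwise when $q\equiv 1\pmod 4$. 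In particular $|T:P_1|=q+1$ is even, $|T:D_{q\pm 1}|$ is even (as $|T|=q(q^2-1)/2$ is even and $D_{q\pm1}$ has $2$-part at most that of $q\pm1$... this needs care), so I expect the argument to come down to: every index of a maximal subgroup of $\PSL(2,q)$ is even when $q\equiv 1\pmod 4$, hence no two can be coprime. One must verify this for the sporadic maximal subgroups too, and handle $q=5,9,13,\dots$ where the list of maximals changes.

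The main obstacle is pinning down precisely which pairs $(|T:K_1|,|T:K_2|)$ of maximal-subgroup indices can be coprime when $q\equiv 1\pmod 4$, i.e. extracting the needed non-existence of maximal coprime factorisations from the literature and checking the exceptional small values of $q$ (and confirming that $q=29$, which \emph{does} admit such a factorisation via $A_5$ of index $203$, is genuinely the only exception in the $q\equiv 1\pmod 4$ range — here $|T:A_5|=|T|/60$ with $|\PSL(2,29)|=29\cdot 28\cdot 30/2 = 12180$, giving index $203=7\cdot29$, coprime to the index $30$ of... one checks $D_{30}$ or the centraliser subdegree $30^m$ from Row 2 of Table \ref{list}). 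Once the factorisation classification is in hand, the theorem follows formally: coprime subdegrees force a maximal coprime factorisation, which forces $q$ even or $q\equiv3\pmod4$ or $q=29$, and conversely each of those cases is realised by Table \ref{coprime subdegrees}.
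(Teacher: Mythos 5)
Your proposal is correct and follows essentially the same route as the paper: the ``if'' direction via the explicit pairs in Table \ref{coprime subdegrees}, and the ``only if'' direction by using Corollary \ref{divisible by |T:K|} to produce coprime indices $|T:K_1|$, $|T:K_2|$, deducing that $T=K_1K_2$ is a maximal coprime factorisation (the paper cites Lemma 3.16 of \cite{Isaacs} for this step), and then reading off the admissible $q$ from the classification of maximal coprime factorisations in \cite{CopSubPraeger}. The paper does not attempt your speculative shortcut that all maximal-subgroup indices are even when $q\equiv 1\pmod 4$ (which is in fact false, e.g.\ $|T:D_{q-1}|$ is odd there); it simply consults the published list, which is the cleaner way to finish.
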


\begin{proof} By the results in Section \ref{T=PSL} we see that for all these values of $q$ the group $G(m,q)$ has nontrivial coprime subdegrees. Now suppose that the group $G(m,q)$ has nontrivial coprime subdegrees. Then there exist $f,g\in N\backslash\{\text{id}\}$ such that $|H:H_f|$ and $|H:H_g|$ are coprime. By Corollary \ref{divisible by |T:K|} there exist $K_1$ and $K_2$ maximal in $T$ such that $|H:H_f|$ is divisible by $|T:K_1|$ and $|H:H_g|$ is divisible by $|T:K_2|$. Since $|T:K_1|$ and $|T:K_2|$ are coprime, it follows from Lemma 3.16 in \cite{Isaacs} that $T=K_1K_2$ is a maximal coprime factorisation. By the list in \cite{CopSubPraeger} it follows that either $q$ is even or $q\equiv 3\pmod{4}$ or $q=29$. \end{proof} 

\section{Maximal subgroups of $H=T\wr S_2$ and $T\times T$}

To address the $m=2$ case, we will need information about the maximal subgroups of $H=T\wr S_2$ and $T\times T$, where $T$ is a nonabelian simple group. 

\begin{lemma}\label{maximal H} Let $X$ be a maximal subgroup of $H=T\wr S_2=(T\times T)\rtimes\langle\iota\rangle$. Then up to conjugacy $X$ has one of the following three types:
\begin{enumerate}
\item $X=T^2=T\times T$ and $|H:X|=2$; 
\item $X=\langle S, (a,b)\iota\rangle$ where $S=\{(t,t^{\sigma})\mid t\in T\}$ for some $\sigma\in \text{Aut}(T)$ and $a,b\in T$ such that $(ab)^{\sigma}=ba$ and $\sigma i_b \sigma = i_a$, and $|H:X|=|T|$; 
\item $X=K\wr S_2$ for some $K$ maximal in $T$, and $|H:X|=|T:K|^2$. 
\end{enumerate}
\end{lemma}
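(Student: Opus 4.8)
The plan is to classify maximal subgroups of $H = (T\times T)\rtimes\langle\iota\rangle$ by separating on whether the maximal subgroup $X$ contains the normal subgroup $T^2 = T\times T$. If $T^2\leqslant X$, then since $|H:T^2| = 2$ and $T^2$ is maximal (because $H/T^2\cong C_2$ has no proper nontrivial subgroups), we get $X = T^2$, which is type (1). So for the rest of the argument I assume $T^2\not\leqslant X$.

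When $T^2\not\leqslant X$, I would study the subgroup $X\cap T^2$, which is normal in $X$, and its projections $\pi_i(X\cap T^2)$ to the two factors. First consider the case $X\leqslant T^2$: then maximality of $X$ in $H$ forces $|H:X| = 2|T^2:X|$ to be as small as possible subject to $X\neq T^2$; by the classical description of subgroups of $T\times T$ (Goursat's lemma), a subgroup of $T^2$ not containing $T^2$ and of maximal order is a diagonal $S = \{(t,t^\sigma)\mid t\in T\}$ for some $\sigma\in\operatorname{Aut}(T)$. But such a diagonal $S$ is not maximal in $H$ — it is properly contained in $\langle S,(a,b)\iota\rangle$ whenever an element $(a,b)\iota$ normalises $S$, and such elements exist (e.g. $(1,1)\iota$ normalises the straight diagonal). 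So $X\not\leqslant T^2$, i.e. $X$ contains some element $(a,b)\iota$, and hence $X = (X\cap T^2)\rtimes\langle (a,b)\iota\rangle$ with $|X:X\cap T^2| = 2$. Now $Y := X\cap T^2$ is a proper subgroup of $T^2$ (as $T^2\not\leqslant X$) that is normalised by $(a,b)\iota$. Conjugation by $(a,b)\iota$ swaps the two coordinates (up to the inner twists by $a,b$), so in particular $\pi_1(Y)$ and $\pi_2(Y)$ are carried onto each other; writing $K_i = \pi_i(Y)$, the automorphism $x\mapsto (x^{i_a})$ composed with the swap sends $K_1$ to $K_2$.

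The dichotomy is then whether $K_1 = T$. If $K_1 = T$ (equivalently $K_2 = T$), apply Goursat: $Y$ corresponds to an isomorphism between $T/(\text{kernel})$ on each side, and since $T$ is simple the kernels are either trivial or all of $T$. If both kernels are all of $T$ then $Y = T^2$, contradiction; so both kernels are trivial and $Y = S = \{(t,t^\sigma)\mid t\in T\}$ for some $\sigma\in\operatorname{Aut}(T)$. Imposing that $(a,b)\iota$ normalises $S$ gives exactly the conditions $(ab)^\sigma = ba$ and $\sigma i_b\sigma = i_a$ by a direct computation: conjugating $(t,t^\sigma)\in S$ by $(a,b)\iota$ yields $(a^{-1}t^\sigma b, b^{-1}t a)$, and for this to lie in $S$ for all $t$ we need $b^{-1}ta = (a^{-1}t^\sigma b)^\sigma$, which unwinds to the stated relations; this is case (2), and $|H:X| = |H:S|/2 = |T|^2\cdot 2 / (|T|\cdot 2) = |T|$. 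If instead $K_1 < T$, pick a maximal subgroup $K$ of $T$ with $K_1\leqslant K$; then $K_2 = K_1^{\text{(twisted swap)}}$ is conjugate in $T$ to $K_1$, so replacing $X$ by a suitable $H$-conjugate I may assume $K_1 = K_2 =: K_0\leqslant K$ and that $(a,b)\iota$ normalises things so that in fact $Y\leqslant K\times K$ and $(a,b)\iota$ can be taken (after conjugating) to be $\iota$ itself with $K$ invariant; then $X\leqslant K\wr S_2$, and maximality of $X$ together with the fact (Corollary 1.5A and Lemma 2.7A of \cite{PermDixon}, already used in Corollary \ref{max}) that $K\wr S_2$ is maximal in $H$ forces $X = K\wr S_2$; this is case (3) with $|H:X| = |T:K|^2$.

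The conjugacy reductions in the third case are the main obstacle: I need to check carefully that after conjugating in $H$ I can simultaneously arrange $K_1 = K_2$, that this common subgroup is normalised appropriately, and that the element $(a,b)\iota\in X$ can be normalised to $\iota$ — this amounts to a bookkeeping argument with the action of $H$ on itself by conjugation, using that all the $K_i$ are $T$-conjugate and that $\langle\iota\rangle$ can be moved by conjugating by elements of $T^2$. Once the reduction is done, the identification $X = K\wr S_2$ is immediate from maximality. I would also double-check in case (2) that the listed conditions on $(a,b,\sigma)$ are not only necessary but that any such triple genuinely yields a subgroup of order $2|T|$ (closure under the group operation), and that two such subgroups can coincide up to conjugacy — though since the statement only claims the classification "up to conjugacy" and "one of the following three types", it suffices to exhibit that every maximal $X$ falls into one of (1)–(3), which the above case analysis delivers.
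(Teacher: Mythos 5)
Your overall strategy is the same as the paper's: split on whether $X$ contains $T^2$, and otherwise analyse $Y=X\cap T^2$ via its projections, with the dichotomy full-diagonal versus proper projection. Cases (1) and (2) are essentially fine, with two small remarks. First, if $T^2\not\leqslant X$ and $X\leqslant T^2$ then $X<T^2<H$ already contradicts maximality, so the digression about Goursat and subgroups of ``maximal order'' is unnecessary (and the claim that maximality forces $|T^2:X|$ to be minimal is not correct as stated, though nothing depends on it). Second, the conjugation formula should read $(t_1,t_2)^{(a,b)\iota}=(t_2^{b},t_1^{a})$, so $(t,t^{\sigma})^{(a,b)\iota}=(t^{\sigma i_b},t^{i_a})$; your displayed version mixes up the roles of $a$ and $b$, although the conditions $\sigma i_b\sigma=i_a$ and $(ab)^{\sigma}=ba$ that you extract are the correct ones (note the latter really comes from $((a,b)\iota)^2=(ab,ba)\in S$ rather than from normalisation alone). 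Also $X=\langle X\cap T^2,(a,b)\iota\rangle$ rather than a semidirect product, since $(ab,ba)$ need not be trivial.

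The one substantive gap is the step you flag yourself: the conjugacy reduction in case (3). You assert that after conjugating one can simultaneously arrange $K_1=K_2$ and replace $(a,b)\iota$ by $\iota$, but do not show it, and the missing ingredient is precisely the observation that $ab\in K$. Indeed, $((a,b)\iota)^2=(ab,ba)\in Y\leqslant K\times K^{a}$ gives $ab\in K$, hence $(1,((ab)^{-1})^{a})\in 1\times K^{a}$ and
\[ (1,((ab)^{-1})^{a})\cdot(a,b)\iota=(a,a^{-1})\iota, \]
so that $\langle K\times K^{a},(a,b)\iota\rangle=\langle K\times K^{a},(a,a^{-1})\iota\rangle=(K\wr S_2)^{(1,a)}$; maximality of $X$ inside this proper subgroup then forces $X=(K\wr S_2)^{(1,a)}$, which is type (3) up to conjugacy. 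Without $ab\in K$ the overgroup $\langle K\times K^{a},(a,b)\iota\rangle$ could a priori meet $T^2$ in something strictly larger than $K\times K^{a}$, and the identification with a conjugate of $K\wr S_2$ would not go through. Inserting this short computation completes your argument, which then coincides with the paper's proof.
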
 

\begin{proof} Let $M$ be a proper subgroup of $H$. If $M\leqslant T^2$, then $M$ is contained in the proper subgroup $T^2$, which is of type (1).

If $M\nleqslant T^2$, there exists $(a,b)\iota\in M$ for some $a,b\in T$. Thus $|M:M\cap T^2|=2$ and $\langle M\cap T^2, (a,b)\iota\rangle=M$. Also note that $((a,b)\iota)^2=(ab,ba)\in M\cap T^2$. Moreover, for any $t_1,t_2\in T$ we have $(t_1,t_2)^{(a,b)\iota}=(t_2^b,t_1^a)$, so $\pi_2(M\cap T^2)=(\pi_1(M\cap T^2))^a$. 

If $\pi_1(M\cap T^2)=\pi_2(M\cap T^2)=T$, then $M\cap T^2=T^2$ or $M\cap T^2\cong T$. If $M\cap T^2=T^2$, then since $M$ contains an element outside of $T^2$ we must have $M=H$, a contradiction. So $M\cap T^2\cong T$. Thus $M\cap T^2=\{(t,t^{\sigma})\mid t\in T\}=S$ where $\alpha\in\text{Aut}(T)$. Since $(ab,ba)\in M\cap T^2$, it follows that $(ab)^{\sigma}=ba$. Since $M\cap T^2\vartriangleleft M$, we have that $(a,b)\iota$ normalises $M\cap T^2$. If we let $i_x$ denote the automorphism of $T$ induced by conjugation by $x$, we have $(t,t^{\sigma})^{(a,b)\iota}=(t^{\sigma i_b},t^{i_a}) \in M\cap T^2$ for all $t\in T$, so $\sigma i_b \sigma = i_a$. Thus $M$ is a subgroup of type (2) and since $|M:M\cap T^2|=2$ we have that $|H:M|=|T|$.  

Finally, if $\pi_1(M\cap T^2)<T$, let $K$ be maximal in $T$ such that $\pi_1(M\cap T^2)\leqslant K$. Then $\pi_2(M\cap T^2)\leqslant K^a$. So $M=\langle M\cap T^2, (a,b)\iota\rangle\leqslant\langle K\times K^a, (a,b)\iota\rangle$. Moreover, since $(ab,ba)\in M\cap T^2$ we have that $ab\in K$. Then $(1, ((ab)^{-1})^a)(a,b)\iota=(a,a^{-1})\iota$, so $\langle K\times K^a, (a,b)\iota\rangle=\langle K\times K^a, (a,a^{-1})\iota\rangle$. Now $((K\times K)\rtimes\langle\iota\rangle)^{(1,a)}=\langle K\times K^a, (a,a^{-1})\iota\rangle$, so up to conjugacy $K$ is contained in a subgroup of type (3). 

Hence each subgroup $M$ of $H$ is contained in a type (1), (2) or (3) subgroup. Observe that a type $(i)$ subgroup cannot be contained in a type $(j)$ subgroup if $i\neq j$, and that a type $(i)$ subgroup cannot be properly contained in another type $(i)$ subgroup. Hence the subgroups stated in the lemma are precisely the maximal subgroups of $H$. \end{proof}

%Alternative proof: Let $X$ be a maximal subgroup of $H$. Since $\text{Core}_H(X)\vartriangleleft H$ and the only normal subgroups of $H$ are $T^2$ and $H$, we have $T^2\leqslant X$ or $\text{Core}_H(X)=1$. In the first case we have a subgroup of type (1) and in the second case, $H$ is primitive on its action on the right cosets of $X$. We now go through the eight possibilities in the subdivision of finite primitive groups given by the O'Nan-Scott Theorem ~\cite{Inclusion}. HA is not possible since $H$ is nonabelian. HS and HC are not possible as $H$ has only one minimal normal subgroup. Since the minimal normal subgroup is of the form $T^2$, we can eliminate AS, CD and TW. In the SD case, the point stabiliser $X$ must be isomorphic to $T\times S_2$, which gives subgroups of type (2). In the PA case, the point stabiliser $X$ must be isomorphic to $K\wr S_2$ for some $K$ maximal in $T$, which gives subgroups of type (3). \end{proof}

\begin{lemma}\label{maximal T^2} The maximal subgroups of $T^2$ are of one of the following two types:
\begin{enumerate}
\item $K\times T$ or $T\times K$ for some $K$ maximal in $T$;
\item $\{(t,t^{\sigma})\mid t\in T\}$ where $\sigma\in\text{Aut}(T)$. 
\end{enumerate}
\end{lemma}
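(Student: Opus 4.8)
The plan is to mimic the structure of the proof of Lemma \ref{maximal H}, but the situation for $T^2$ is simpler since there is no $\iota$ to worry about. Let $M$ be a proper subgroup of $T^2$, and consider the two projections $\pi_1(M)$ and $\pi_2(M)$. The key dichotomy is whether both projections are all of $T$, or at least one is proper.

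First I would dispose of the case where some projection is proper: say $\pi_1(M) < T$. Choose a maximal subgroup $K$ of $T$ with $\pi_1(M) \leqslant K$. Then $M \leqslant K \times T$, which is a proper subgroup of $T^2$ of type (1). Symmetrically, if $\pi_2(M) < T$ we get $M \leqslant T \times K$, again of type (1). So in this case $M$ is contained in a type (1) subgroup.

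Next I would handle the case $\pi_1(M) = \pi_2(M) = T$. This is precisely the setting of Goursat's lemma: the projections being onto means $M$ corresponds to an isomorphism between a quotient $T/A$ and a quotient $T/B$, where $A = M \cap (1 \times T)$ and $B = M \cap (T \times 1)$ (identified with normal subgroups of $T$). Since $T$ is simple, each of $A, B$ is either $1$ or $T$. If $A = B = T$ then $M = T^2$, contradicting properness. If one of them is $T$ and the other is $1$, say $B = T$ and $A = 1$, then $M$ contains $T \times 1$ and $\pi_2(M) = T$ forces $M = T^2$, again a contradiction. Hence $A = B = 1$, and $M$ is the graph of an isomorphism $T \to T$, i.e.\ $M = \{(t, t^\sigma) \mid t \in T\}$ for some $\sigma \in \mathrm{Aut}(T)$ — a subgroup of type (2).

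Finally I would check that the listed subgroups really are maximal and that the two types are distinct, which is routine: a type (1) subgroup $K \times T$ has a projection equal to $T$ and a proper projection, whereas a type (2) subgroup has both projections equal to $T$ but is not all of $T^2$; so no type (1) subgroup is contained in a type (2) subgroup or vice versa, and by the classification just established no proper subgroup properly contains one of the listed subgroups. The main obstacle, such as it is, is correctly invoking Goursat's lemma and using simplicity of $T$ to pin down the normal subgroups $A$ and $B$; everything else is bookkeeping. (One could alternatively argue directly: if $M \cap (1 \times T) \neq 1$ then by normality in $\pi_2(M) = T$ and simplicity it equals $1 \times T$, forcing $M = T^2$; so $M \cap (1 \times T) = 1$, and similarly $M \cap (T \times 1) = 1$, whence $M$ is a graph of a bijective homomorphism which must be an automorphism.)
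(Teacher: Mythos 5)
Your proposal is correct and follows essentially the same approach as the paper: split on whether some projection $\pi_i(M)$ is proper (giving containment in a type (1) subgroup) or both projections are onto (giving a diagonal subgroup), then check that no listed subgroup is contained in another. The only difference is that you spell out the Goursat/simplicity argument for the second case, which the paper compresses into the assertion that $\pi_1(M)=\pi_2(M)=T$ forces $M\cong T$.
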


\begin{proof} Let $M$ be a proper subgroup of $T^2$. If $\pi_1(M)<T$, then let $K$ be maximal in $T$ such that $\pi_1(M)\leqslant K$. Then $M\leqslant K\times T$. Similarly, if $\pi_2(M)<T$ then there exists $K$ maximal in $T$ such that $M\leqslant T\times K$. So if there exists $i$ such that $\pi_i(M)$ is a proper subgroup of $T$, it follows that $M$ is contained in a type (1) subgroup. If $\pi_1(M)=\pi_2(M)=T$, then $M\cong T$, so $M$ is equal to (and thus contained in) a type (2) subgroup. So in all cases, $M$ is contained in a type (1) or (2) subgroup.

It is easy to show that a type $(i)$ subgroup cannot be contained in a type $(j)$ subgroup if $i\neq j$, and that a type $(i)$ subgroup cannot be properly contained in another type $(i)$ subgroup. So type (1) and type (2) subgroups are maximal in $T^2$. \end{proof}

%Alternative proof: Let $X$ be a maximal subgroup of $T^2$. Since $\text{Core}_{T^2}(X)\vartriangleleft T^2$ and the only normal subgroups of $H$ are $T^2$, $1\times T$, $T\times 1$ and $1$, we have $1\times T\leqslant X$, $T\times 1\leqslant X$ or $\text{Core}_{T^2}(X)=1$. The first two cases give subgroups of type (1) and in the third case, $T^2$ is primitive in its action on the right cosets of $X$. We now go through the eight possibilities in the subdivision of finite primitive groups given by the O'Nan-Scott Theorem ~\cite{Inclusion}. Since $T^2$ has two minimal normal subgroups, this eliminates all possibilities except for HS and HC. In HS the point stabiliser $X$ must be isomorphic to $T$, giving subgroups of type (2). In HC the socle of the primitive group has at least four factors of $T$, which is impossible. \end{proof}

\section{The $m=2$ case}\label{m=2}

We now address the $m=2$ case. Since $\text{PSL}(2,4)\cong\text{PSL}(2,5)$, it follows that $G(2,4)\cong G(2,5)$. So from now on, suppose $q\neq 5$. Recall that $H=T\wr S_2$ and $L=\{(x,x)\sigma\mid x\in T, \sigma\in S_2\}$. 

\begin{lemma}\label{two cases} Suppose that $|H:H_f|$ and $|H:H_g|$ are nontrivial coprime subdegrees of $G(2,q)$. Let $R_1$ and $R_2$ be maximal subgroups of $H$ such that $H_f\leqslant R_1$ and $H_g\leqslant R_2$. Then one of the following holds:
\begin{enumerate}
\item $R_1$ and $R_2$ are of type (3) in Lemma \ref{maximal H};
\item after reordering, $R_1=T^2$ and $H_f$ is contained in a subgroup of type (1) in Lemma \ref{maximal T^2}, $R_2$ is of type (3) in Lemma \ref{maximal H} and $|H:R_2|$ is odd. 
\end{enumerate}
\end{lemma}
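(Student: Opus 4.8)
The plan is to combine the structural information from the preceding sections. First I would recall that since $|H:H_f|$ and $|H:H_g|$ are coprime, at least one of them must be odd; since $|H:T^2|=2$, the subgroup $H_f$ (or $H_g$) with odd index cannot be contained in $T^2$, so after reordering we may assume $H_f \not\leqslant T^2$ and hence $H_f \leqslant R_1$ with $R_1$ of type (2) or (3) in Lemma~\ref{maximal H}. I would dispose of type (2) first: if $R_1$ is of type (2), then $|H:R_1|=|T|$, and by Corollary~\ref{divisible by |T:K|} the subdegree $|H:H_g|$ is divisible by $|T:K_2|$ for some maximal $K_2$ of $T$; since $|T:K_2|$ divides $|T|$ and $|H:H_f|$ is divisible by $|H:R_1|=|T|$, these cannot be coprime unless $|T:K_2|=1$, which is impossible. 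Hence $R_1$ is of type (3), i.e. $R_1 = K_1 \wr S_2$ for some maximal $K_1 \leqslant T$, and $|H:R_1| = |T:K_1|^2$.

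Next I would analyse $R_2$. Again by Corollary~\ref{divisible by |T:K|}, the subdegree $|H:H_g|$ is divisible by $|T:K_2|$ for some maximal subgroup $K_2$ of $T$, and $|T:K_1|$ is coprime to $|T:K_2|$ (both divide the two coprime subdegrees). In particular $K_1 \neq K_2$. Now $R_2$ is a maximal subgroup of $H$ containing $H_g$; it is of type (1), (2) or (3). Type (2) is excluded exactly as above, since $|H:R_2|=|T|$ is not coprime to $|H:R_1|=|T:K_1|^2$ unless $K_1 = T$. If $R_2$ is of type (3), say $R_2 = K_2' \wr S_2$, we are in case (1) of the statement (note $|H:R_2| = |T:K_2'|^2$, which is automatically compatible with coprimality when $K_2'$ is chosen appropriately). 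The remaining possibility is that $R_2 = T^2$ is of type (1) in Lemma~\ref{maximal H}; then $H_g \leqslant T^2$, and by Lemma~\ref{maximal T^2} the point stabiliser $H_g$ is contained in a maximal subgroup of $T^2$ of type (1) or type (2). If it were contained in a type (2) subgroup $\{(t,t^\sigma)\mid t\in T\} \cong T$, then $H_g$ would be contained in a subgroup isomorphic to a section of $T$, whence $T^m = T^2 \not\leqslant H_g$; but then $|H:H_g|$ is divisible by $|T^2:H_g \cap T^2| = |T^2:H_g|$ which is divisible by $|T|$, contradicting coprimality with $|H:R_1|=|T:K_1|^2 \mid |T|$ unless $K_1=T$. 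So $H_g$ is contained in a subgroup of type (1) of Lemma~\ref{maximal T^2}, as in case (2). Finally, to see $|H:R_1|=|T:K_1|^2$ is odd in case (2): since $|H:H_g|$ is divisible by $|H:R_2|=2$, it is even, so $|H:H_f|$ — and hence its multiple-relationship with $|H:R_1| = |T:K_1|^2$ forces $|T:K_1|^2$, and thus $|T:K_1|$, to be odd (here I would need that the two coprime subdegrees are $|H:H_f|$, divisible by $|T:K_1|^2$, and $|H:H_g|$, divisible by $2$, and since they are coprime, $|T:K_1|^2$ is odd).

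I should be slightly careful about the logical bookkeeping: the statement as written puts $R_1$ (the type-(3) factor in both cases) consistently on one side, so after the initial reduction establishing that some $R_i$ is of type (3), I would track which of $H_f, H_g$ has the odd index and which has the even one, and relabel so that in case (2) $R_1 = T^2$ (even index) and $R_2$ (odd index, type (3)) — matching the wording of the lemma exactly. I expect the main obstacle to be not any deep mathematics but getting this relabelling right and making sure every appeal to ``divisible by $|T:K|$ for some maximal $K$'' (Corollary~\ref{divisible by |T:K|}) is paired correctly with the coprimality hypothesis; the elimination of type-(2) maximal subgroups and of the type-(2) subgroup of $T^2$ is the recurring trick, and it rests entirely on the fact that $|T:K| \neq 1$ for $K$ maximal together with $|T:K_1|^2 \mid |T|$ forcing $\gcd \neq 1$.
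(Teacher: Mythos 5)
Your proposal follows the paper's argument almost exactly: both eliminate the type (2) maximal subgroups of $H$, and the diagonal maximal subgroups of $T^2$, by playing Corollary \ref{divisible by |T:K|} off against the coprimality hypothesis, and both use parity to force the remaining maximal overgroup into type (3) with odd index. Two small points need repair. First, in the case $R_2=T^2$ you pass directly to ``$H_g$ is contained in a maximal subgroup of $T^2$ of type (1) or (2)''; this presupposes $H_g\neq T^2$, which coprimality alone does not rule out (a subdegree of $2$ paired with an odd one would be coprime). You need Lemma \ref{no T^m} here --- exactly what the paper invokes at this point --- to see that $T^2\leqslant H_g$ would force $g=\text{id}$, contradicting nontriviality of the subdegree. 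Second, the assertion ``$|T:K_1|^2\mid |T|$'' is false in general; the argument only needs that $|T:K_1|>1$ divides $|T|$, so that every prime divisor of $|T:K_1|$ divides both subdegrees, and with that correction the elimination goes through. With these two fixes the proof is complete and coincides with the paper's.
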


\begin{proof} By Lemma \ref{divisible by |T:K|}, neither $R_1$ nor $R_2$ can be of type (2) as $|T|$ and $|T:K|$ are never coprime for any $K$ maximal in $T$. If both $R_1$ and $R_2$ are of type (3) in Lemma \ref{maximal H}, we obtain the first case. Now assume without loss of generality that $R_1$ is of type (1) and so $R_1=T^2$. Then by Lemma \ref{no T^m} we have $H_f\neq T^2$, so $H_f$ is strictly contained in $T^2$. Thus $H_f$ is contained in a maximal subgroup of $T^2$, say $M$. Then $|H:H_f|$ is divisible by $|H:M|$. If $M$ is of type (2) in Lemma \ref{maximal T^2}, then $|H:M|=2|T|$, so by Lemma \ref{divisible by |T:K|} it is not possible for $|H:H_f|$ and $|H:H_g|$ to be coprime. Thus $M$ is of type (1) in Lemma \ref{maximal T^2}. It is not possible for $R_2$ to be of type (1) or (2) as then both $|H:R_1|$ and $|H:R_2|$ will be even. Thus $R_2$ is of type (3) in Lemma \ref{maximal H}. We note that $|H:R_2|$ is odd since $|H:M|$ is even. \end{proof} 

\begin{lemma}\label{further details} Suppose that Case (2) of Lemma \ref{two cases} holds. Let $H_f$ be contained in a subgroup of type (1) in Lemma \ref{maximal T^2} constructed from $K_1$. Let $R_2$ be constructed from $K_2$. Then the possibilities for $K_1$ and $K_2$ are as follows:
\begin{enumerate} 
\item $q$ is even, $K_1=D_{2(q+1)}$ and $K_2=P_1$;
\item $q\equiv 3\pmod{4}$ with $q>7$, $K_1=P_1$ and $K_2=D_{q+1}$;
\item $q\in\{7,23\}$, $K_1=P_1$ and $K_2=S_4$;
\item $q\in\{11,19,29,59\}$, $K_1=P_1$ and $K_2=A_5$. 
\end{enumerate}
\end{lemma}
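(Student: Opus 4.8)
The plan is to work through the two cases of Lemma \ref{two cases} separately and use the divisibility/coprimality constraints to cut the list of possible maximal subgroups of $T=\PSL(2,q)$ down to the stated pairs. In Case (2), we are given that $R_1=T^2$ with $H_f$ contained in a type (1) subgroup $K_1\times T$ (or $T\times K_1$), so $|T:K_1|$ divides $|H:H_f|$; and $R_2=K_2\wr S_2$ is of type (3) with $|H:R_2|=|T:K_2|^2$ \emph{odd}, so $|T:K_2|$ is odd and divides $|H:H_g|$. The coprimality of $|H:H_f|$ and $|H:H_g|$ forces $|T:K_1|$ and $|T:K_2|$ to be coprime, and moreover — crucially — $|T:K_1|$ must be \emph{even}, since $|H:H_f|$ is even (as $H_f < T^2$ gives $2 \mid |H:H_f|$) while $|H:H_g| = |H:R_2| \cdot |R_2:H_g|$; here one has to check that the evenness lands on the $K_1$ side, which follows because $|T:K_2|^2$ is odd and, after refining $R_2$ down to $H_g$, oddness of $|H:H_g|$ would be incompatible with $|H:H_f|$ also being odd — actually the argument in Lemma \ref{two cases} already records that $|H:R_2|$ is odd, so it is $|T:K_1|$ that carries the factor of $2$.

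The first main step, then, is to enumerate maximal subgroups $K$ of $\PSL(2,q)$ with $|T:K|$ \emph{even} — these are the candidates for $K_1$ — and those with $|T:K|$ \emph{odd} — candidates for $K_2$ — using the classical list of maximal subgroups (Dickson, as corrected in \cite{PSLMoore}). Recall $|T| = \frac{1}{(2,q-1)}q(q-1)(q+1)$. For $q$ even, $|T|=q(q^2-1)$ and the subgroups $P_1$ (index $q+1$, odd), $D_{2(q-1)}$ (index $\tfrac12 q(q+1)$), $D_{2(q+1)}$ (index $\tfrac12 q(q-1)$), etc.\ have indices whose $2$-parts are easy to read off; this should give $K_2=P_1$ and $K_1=D_{2(q+1)}$ as essentially the only pair with the right coprimality, matching row (1). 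For $q$ odd we split into $q\equiv 1$ and $q\equiv 3\pmod 4$ and compute the $2$-part of $|T:K|$ for $K\in\{P_1, D_{q-1}, D_{q+1}, A_4, S_4, A_5\}$ (with the usual congruence conditions on $q$ for the latter three to occur); one finds $|T:P_1|=q+1$, and the evenness/oddness of $q\pm1$ together with the orders of $A_4,S_4,A_5$ pins the candidates down. For each surviving pair one must then verify coprimality of the actual indices $|T:K_1|$ and $|T:K_2|$ (not just their $2$-parts), which for the dihedral and $P_1$ cases reduces to $\gcd(q-1,q+1)\in\{1,2\}$ and small prime bookkeeping; the sporadic exceptions $q\in\{7,11,19,23,29,59\}$ arise precisely where $|T:A_5|$ or $|T:S_4|$ happens to be odd and coprime to $q+1$ (e.g.\ $|A_5|=60$, $|S_4|=24$, and one needs $q+1$ coprime to these), which is a finite check.

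The step I expect to be the genuine obstacle is the sporadic-case analysis: ruling out spurious candidates among the exceptional subgroups $A_4$, $S_4$, $A_5$ for \emph{all} $q$ rather than just congruence-generically. One has to be careful that (a) the relevant exceptional subgroup actually \emph{is} maximal in $\PSL(2,q)$ for the $q$ in question (e.g.\ $A_5$ is not maximal when $A_5\le \PSL(2,q)$ lies inside a larger $A_5$ or when $q=p$ vs $q=p^2$), (b) its index has the required parity, and (c) the index is coprime to $q+1$. Conditions (a)–(c) together are satisfied only for the listed finite set $\{7,23\}$ (for $S_4$) and $\{11,19,29,59\}$ (for $A_5$); establishing this is a matter of solving the congruences $q\equiv\pm1\pmod 8$ resp.\ $q\equiv\pm1\pmod{10}$ simultaneously with "$|T:K|$ odd" and "$\gcd(|T:K|,q+1)=1$", and checking the small solutions by hand or \textsc{Magma}. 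Case (1) of Lemma \ref{two cases}, where both $R_1,R_2$ are of type (3), is not needed for this lemma — Lemma \ref{further details} only concerns Case (2) — so the proof is complete once Case (2) is exhausted.
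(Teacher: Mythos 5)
You identify the same governing constraint as the paper: since $H_f\leqslant M$ with $|H:M|=2|T:K_1|$ and $H_g\leqslant R_2$ with $|H:R_2|=|T:K_2|^2$, coprimality of the two subdegrees forces $2|T:K_1|$ to be coprime to $|T:K_2|^2$, hence $|T:K_1|$ and $|T:K_2|$ are coprime and $|T:K_2|$ is odd. Where you diverge is in how the resulting pairs $(K_1,K_2)$ are found. The paper notes (via Lemma 3.16 of Isaacs, exactly as in the theorem of Section \ref{any m at least 2}) that coprime indices of maximal subgroups force $T=K_1K_2$ to be a maximal coprime factorisation, and then reads the answer off the complete list of such factorisations of $\PSL(2,q)$ in \cite{CopSubPraeger}; the parity condition on $|T:K_2|$ then fixes which factor plays which role. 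You instead propose to re-derive that list by enumerating maximal subgroups of even and odd index from Dickson's list. That route could be made to work, but it amounts to redoing the classification the paper deliberately outsources.

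Two concrete problems with your sketch. First, your candidate list $\{P_1,D_{q-1},D_{q+1},A_4,S_4,A_5\}$ omits the subfield maximal subgroups $\PSL(2,q_0)$ and $\mathrm{PGL}(2,q_0)$ for $q=q_0^r$; for instance $\mathrm{PGL}(2,q_0)\leqslant\PSL(2,q_0^2)$ has odd index, so it is a live candidate for $K_2$ that must be (and can be) excluded, but your argument never sees it. Second, your filter ``$|T:K_1|$ must be even'' is not justified: the factor of $2$ in $|H:H_f|$ comes from $|H:T^2|=2$, not from $|T:K_1|$, and the coprimality condition only forces $|T:K_2|$ odd while saying nothing about the parity of $|T:K_1|$. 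Screening $K_1$-candidates by even index could in principle discard a valid pair in which both indices are odd; nothing in your argument rules that configuration out a priori (it happens not to occur for $\PSL(2,q)$, but that is a consequence of the classification, not an input to it). If you drop that filter, add the subfield subgroups, and actually carry out the arithmetic you defer to ``small prime bookkeeping'' --- including the maximality conditions for $A_4$, $S_4$, $A_5$, which you yourself flag as the obstacle --- your enumeration becomes a genuine, if laborious, alternative to citing \cite{CopSubPraeger}; as written, the step that constitutes the entire content of the lemma is left undone.
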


\begin{proof} We need $|H:M|=2|T:K_1|$ to be coprime to $|H:R_2|=|T:K_2|^2$, where $H_f\leqslant M=T\times K_1$ or $K_1\times T$. In particular $|T:K_1|$ must be coprime to $|T:K_2|$ so we can systematically work through the list of maximal coprime factorisations in \cite{CopSubPraeger}, to obtain the possibilities mentioned above. \end{proof}

We will do some detailed analysis about the possibilities for $H_f$ and $H_g$ when $|H:H_f|$ and $|H:H_g|$ are nontrivial coprime subdegrees of $G(2,q)$. Since $H_f^h=H_{f^h}$ for all $h\in H$, it suffices to consider $H_f$ up to conjugacy in $H$. We begin with a lemma about subgroups of $T\times P_1$ that will help simplify our casework later.

\begin{lemma}\label{T P_1} Let $q\equiv 3\pmod{4}$. Let $X$ be a subgroup of $M=T\times P_1$ such that $|M:X|$ is coprime to $\frac{1}{2}q(q-1)$. Then $X=M$ or $X=P_1^t\times P_1$ for some $t\in T$. \end{lemma}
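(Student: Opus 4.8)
The plan is to analyse $X$ through its two coordinate projections $\pi_1\colon M\to T$ and $\pi_2\colon M\to P_1$, reduce to the case that $X$ is a direct product, and then pin down the first factor using the subgroup structure of $\PSL(2,q)$.

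First I would show that $\pi_2(X)=P_1$ and that $K:=\pi_1(X)$ has index in $T$ coprime to $\frac12 q(q-1)$. Indeed $X\leqslant T\times\pi_2(X)$, so $|P_1:\pi_2(X)|$ divides $|M:X|$; since $|M:X|$ is coprime to $\frac12 q(q-1)=|P_1|$ while $|P_1:\pi_2(X)|$ divides $|P_1|$, we get $\pi_2(X)=P_1$. Likewise $X\leqslant K\times P_1$ gives that $|T:K|$ divides $|M:X|$, hence $|T:K|$ is coprime to $\frac12 q(q-1)$. Next I would apply Goursat's lemma to $X\leqslant K\times P_1$, where now both projections are onto: the kernel of $\pi_1|_X$ is $\{1\}\times N$ for a normal subgroup $N\trianglelefteq P_1$, and $|K\times P_1:X|=|P_1:N|$, so $|P_1:N|$ divides $|M:X|$ and therefore is coprime to $|P_1|$; as it also divides $|P_1|$ it equals $1$, i.e.\ $\{1\}\times P_1\leqslant X$. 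Combined with $\pi_1(X)=K$ this forces $X=K\times P_1$. So everything comes down to showing that a subgroup $K\leqslant T=\PSL(2,q)$ with $|T:K|$ coprime to $\frac12 q(q-1)$ must be $T$ itself or a conjugate of $P_1$.

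For this final step -- which I expect to be the crux -- write $q=p^f$, so that $p\mid q\mid \frac12 q(q-1)$; coprimality then forces $p\nmid|T:K|$, i.e.\ $q$ divides $|K|$, so $K$ contains a Sylow $p$-subgroup $Q$ of $T$, which is then also a Sylow $p$-subgroup of $K$. In the natural action of $T$ on the $q+1$ points of the projective line, $Q$ fixes a unique point and acts regularly on the remaining $q$, so the only $Q$-invariant subsets containing that fixed point have size $1$ or $q+1$. Applying this to the (necessarily $Q$-invariant, being $K$-invariant) set of Sylow $p$-subgroups of $K$ shows that $K$ has either one Sylow $p$-subgroup -- whence $Q\trianglelefteq K$ and $K\leqslant N_T(Q)$, a conjugate $P_1^t$ of $P_1$ -- or all $q+1$ of them, whence $K$ contains every unipotent element and so $K=T$, since $\PSL(2,q)$ is generated by its unipotent elements. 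In the first case $|T:K|=(q+1)\,|P_1^t:K|$ is coprime to $|P_1^t|=\frac12 q(q-1)$ while $|P_1^t:K|$ divides $|P_1^t|$, so $|P_1^t:K|=1$ and $K=P_1^t$; here the hypothesis $q\equiv 3\pmod 4$ is used to know that $q+1$ itself is coprime to $\frac12 q(q-1)$, since then $\frac12(q-1)$ is odd and a gcd computation gives $\gcd\!\left(q+1,\ q\cdot\tfrac12(q-1)\right)=1$. (Alternatively, one can simply read off from the list of maximal subgroups of $\PSL(2,q)$ in \cite{PSLMoore} that $P_1$ is, up to conjugacy, the only maximal subgroup of index coprime to $\frac12 q(q-1)$, and then argue as above that $K$ cannot be a proper subgroup of $P_1^t$.) Assembling the two cases yields $X=M$ or $X=P_1^t\times P_1$, as required.

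The main delicacy is making the Sylow/orbit argument in the last paragraph watertight: checking that $Q$ really is a full Sylow $p$-subgroup of $K$, that the set of Sylow $p$-subgroups of $K$ corresponds to a $Q$-invariant set of projective points containing the $Q$-fixed point, and hence that the number of Sylow $p$-subgroups of $K$ is forced into $\{1,q+1\}$. Everything else (the projection and Goursat steps, and the final coprimality bookkeeping) is routine.
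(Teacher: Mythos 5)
Your argument is correct, and it reaches the conclusion by a genuinely different route from the paper. The paper splits on whether $\pi_1(X)=T$ or $\pi_1(X)<T$: in the first case an order count forces $X=M$, and in the second it deduces $|X|\geqslant|P_1|^2$ from the coprimality hypothesis, hence $|\pi_1(X)|\geqslant|P_1|$, and then invokes Moore's list of subgroups of $\PSL(2,q)$ \cite{PSLMoore} to conclude $\pi_1(X)$ is a conjugate of $P_1$ (after ruling out $\pi_1(X)\cong S_4$ at $q=7$ and $\pi_1(X)\cong A_5$ at $q=11$ by hand). You instead first use the projection/Goursat bookkeeping to show $\{1\}\times P_1\leqslant X$, so that $X=K\times P_1$ with $K=\pi_1(X)$ of index coprime to $\tfrac12 q(q-1)$, and then classify such $K$ by a Sylow argument: $K$ contains a Sylow $p$-subgroup $Q$ of $T$, the Sylow $p$-subgroups of $K$ correspond to a $Q$-invariant set of projective points containing the $Q$-fixed point, and the orbit sizes $1$ and $q$ force either $Q\trianglelefteq K\leqslant N_T(Q)=P_1^t$ (whence $K=P_1^t$ by the coprimality) or $K\supseteq$ all unipotents, whence $K=T$ by simplicity. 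All the steps you flag as delicate do go through: $Q$ is a full Sylow $p$-subgroup of $K$ because $p\nmid|T:K|$, the Sylow-to-fixed-point map is a $T$-equivariant bijection since each point stabiliser has a unique (normal) Sylow $p$-subgroup, and the invariant-set dichotomy is immediate. What your approach buys is self-containedness: it needs no subgroup classification and no case analysis at $q\in\{7,11\}$, and it exposes the direct-product structure of $X$ explicitly. One small remark: the parenthetical where you invoke $q\equiv 3\pmod 4$ to get $\gcd\bigl(q+1,\tfrac12 q(q-1)\bigr)=1$ is not actually needed — $|P_1^t:K|$ already divides $|T:K|$, which is coprime to $\tfrac12 q(q-1)$, while also dividing $\tfrac12 q(q-1)$, so it is $1$ regardless; the congruence hypothesis plays no essential role in either your proof or the paper's.
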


\begin{proof} If $\pi_1(X)=T$, then $T=\pi_1(X)\cong X/\text{ker}\pi_1$, so $|X|=|T||\text{ker}\pi_1|$. Now $|M:X|=\frac{|T||P_1|}{|T||\text{ker}\pi_1|}=\frac{|P_1|}{|\text{ker}\pi_1|}$ divides $|P_1|=\frac{1}{2}q(q-1)$, so we must have $X=M$. 

Now suppose $\pi_1(X)<T$. Then $|X|\leqslant |\pi_1(X)||P_1|$. Note that $|M|=|T||P_1|=|P_1|^2(q+1)$, and every prime factor of $|P_1|^2$ is clearly a prime factor of $\frac{1}{2}q(q-1)=|P_1|$, so for $|M:X|$ to be coprime to $\frac{1}{2}q(q-1)$ we require $|X|\geqslant |P_1|^2$. Combining this with $|X|\leqslant |\pi_1(X)||P_1|$ we obtain $|\pi_1(X)|\geqslant |P_1|$. We cannot have $q=7$ and $\pi_1(X)=S_4$ as then $|M:X|$ is divisible by $|M:S_4\times P_1|=7$, which is not coprime to $\frac{1}{2}q(q-1)=21$. Similarly, we cannot have $q=11$ and $\pi_1(X)=A_5$ as then $|M:X|$ is divisible by $|M:A_5\times P_1|=11$ which is not coprime to $\frac{1}{2}q(q-1)=55$. So it follows from \cite{PSLMoore} that $\pi_1(X)\cong P_1$. Since $|X|\geqslant |P_1|^2$ it follows that $X=P_1^t\times P_1$. \end{proof} 

\begin{lemma}\label{nine cases} If $|H:H_f|$ and $|H:H_g|$ are nontrivial coprime subdegrees of $G(2,q)$, then up to reordering one of the possibilities in Table \ref{big} holds. 

\begin{table}
\caption {Various possibilities for $H_f$ and $H_g$} \label{big} 
\begin{center}
\begin{tabular}{ |c|c|c| } 
 \hline
 $q$ & $H_f$ & $H_g$ \\ 
 \hline\hline
 even & $P_1\wr S_2$ & $C_{q+1}\times C_{q+1}\leqslant H_g$ \\                        
& $P_1\wr S_2$ & $|T\times D_{2(q+1)}:H_g|$ is coprime to $(q+1)^2$\\ 
 \hline
 $\equiv 3\pmod{4}$ & $P_1\wr S_2,T\times P_1$ or $P_1\times P_1$ & $D_{q+1}\wr S_2$ \\
 \hline
$7$ & $C_7\times C_7\leqslant H_f$ & $S_4\wr S_2$\\
& $P_1\wr S_2,T\times P_1$ or $P_1\times P_1$ & $D_8\wr S_2$ \\
 \hline
$11$ & $C_{11}\times C_{11}\leqslant H_f$ & $A_5\wr S_2$\\
& $P_1\wr S_2,T\times P_1$ or $P_1\times P_1$ & $A_4\wr S_2$ \\
 \hline
 $19$ & $P_1\wr S_2,T\times P_1$ or $P_1\times P_1$ & $A_5\wr S_2$ \\
 \hline
 $23$ & $P_1\wr S_2,T\times P_1$ or $P_1\times P_1$ & $S_4\wr S_2$ \\
 \hline
$29$ & $X\times X\leqslant H_f$ where $|P_1:X|=2$ & $A_5\wr S_2$\\ 
 \hline 
$59$ & $P_1\wr S_2,T\times P_1$ or $P_1\times P_1$ & $A_5\wr S_2$ \\
 \hline
\end{tabular}
\end{center}
\end{table}
\end{lemma}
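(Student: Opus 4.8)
The plan is to start from the dichotomy of Lemma \ref{two cases} and, inside each of its two cases, use coprimality of $|H:H_f|$ and $|H:H_g|$ together with order arithmetic and the small, well-understood subgroup lattices of the groups $K\wr S_2$ to pin down, or for small $q$ to constrain, $H_f$ and $H_g$.

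\textbf{Step 1: reduce to a short list of pairs.} Fix maximal subgroups $R_1,R_2$ of $H$ with $H_f\leqslant R_1$ and $H_g\leqslant R_2$. By Lemma \ref{two cases} we are in one of two cases: either both $R_i$ are of type (3) in Lemma \ref{maximal H}, say $R_1=K_1\wr S_2$ and $R_2=K_2\wr S_2$ with $K_i$ maximal in $T$; or, after reordering, $R_1=T^2$ with $H_f$ contained in a type-(1) subgroup $M$ of $T^2$ (so $M=T\times K_1$ or $K_1\times T$ for a maximal $K_1$ of $T$), while $R_2=K_2\wr S_2$ with $|H:R_2|=|T:K_2|^2$ odd. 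In the first case $|H:H_f|$ and $|H:H_g|$ are divisible by $|T:K_1|^2$ and $|T:K_2|^2$ respectively, so coprimality forces $\gcd(|T:K_1|,|T:K_2|)=1$, whence $T=K_1K_2$ is a maximal coprime factorisation; by the classification of such factorisations of $\PSL(2,q)$ in \cite{CopSubPraeger}, after reordering $(K_1,K_2)$ is one of $(P_1,D_{2(q+1)})$ with $q$ even, $(P_1,D_{q+1})$ with $q\equiv 3\pmod 4$ and $q$ large enough for $D_{q+1}$ to be maximal, $(P_1,S_4)$ with $q\in\{7,23\}$, or $(P_1,A_5)$ with $q\in\{11,19,29,59\}$. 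In the second case Lemma \ref{further details} already lists the four possibilities for $(K_1,K_2)$.

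\textbf{Step 2: determine $H_f$ and $H_g$ for each pair.} Fix one of the pairs from Step 1. Writing $|H:H_g|=|H:R_2|\cdot|R_2:H_g|$, coprimality with $|H:H_f|$ (which is divisible by $|H:R_1|$) forces $|R_2:H_g|$ to involve only a very restricted set of primes, and then the known possible indices of subgroups of $K_2\wr S_2$ pin $H_g$ down: for $K_2\in\{D_{2(q+1)},D_{q+1}\}$ one gets $H_g=K_2\wr S_2$; for $K_2=S_4$ one gets $H_g\in\{S_4\wr S_2,D_8\wr S_2\}$, using that $S_4\wr S_2$ has no subgroup of index $3$ and its only proper subgroup of $3$-power index is the index-$9$ subgroup $D_8\wr S_2$; and for $K_2=A_5$ one gets $H_g\in\{A_5\wr S_2,A_4\wr S_2\}$, using that $A_5\wr S_2$ has no proper subgroup of $5$-power index apart from the index-$25$ subgroup $A_4\wr S_2$ (these finitely many index claims follow from inspecting the normal subgroups and quotients of $S_4\wr S_2$ and $A_5\wr S_2$, or may be checked in \textsc{Magma}~\cite{Magma}). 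With $H_g$ determined, the reverse coprimality constraint shows $|H_f|$ is divisible by the full $\pi$-part of $|H|$, where $\pi$ is the set of primes dividing $|H:H_g|$; since $R_1=P_1\wr S_2$ contains a Hall $\pi$-subgroup of $H$ and, whenever $(q-1)/2$ is odd, has $P_1\times P_1$ as its unique subgroup of index $2$, this yields $H_f=P_1\wr S_2$ or $H_f=P_1\times P_1$ --- except for $q\in\{7,11,29\}$, where $\pi$ is too small and we conclude only that $H_f$ contains, up to conjugacy, the relevant Sylow $p$-subgroup $C_q\times C_q$ (for $q=7,11$) or the Hall $\{7,29\}$-subgroup $X\times X$ with $|P_1:X|=2$ (for $q=29$). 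In Case (2), for $q\equiv 3\pmod 4$ the same bookkeeping makes Lemma \ref{T P_1} applicable with $M=T\times P_1$, giving $H_f\in\{T\times P_1,P_1\times P_1\}$ up to conjugacy; for $q=29$ one again extracts $X\times X$; and for $q$ even one obtains $H_f=P_1\wr S_2$ together with the stated coprimality condition on $|T\times D_{2(q+1)}:H_g|$. Assembling the outcomes over the two cases of Lemma \ref{two cases} and all pairs $(K_1,K_2)$, and identifying conjugate possibilities (such as $P_1^t\times P_1$ with $P_1\times P_1$, and $T\times P_1$ with $P_1\times T$), gives precisely the eleven rows of Table \ref{big}.

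\textbf{Main obstacle.} The substance of the proof is Step 2, and the genuinely delicate part is the exceptional small $q\in\{7,11,29\}$: there the prime sets are too small to isolate $H_f$ up to conjugacy, so one must (i) verify which indices actually occur among subgroups of $S_4\wr S_2$ and $A_5\wr S_2$ --- small but fiddly finite checks, cleanest via the quotients of these wreath products --- and (ii) record the right weakened conclusion, a Sylow- or Hall-subgroup containment, that is nevertheless strong enough for the subsequent analysis. Keeping the roles of $f$ and $g$ consistent across the ``up to reordering'' in Lemma \ref{two cases} is a bookkeeping hazard rather than a mathematical one.
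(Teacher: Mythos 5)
Your proposal follows essentially the same route as the paper: the dichotomy of Lemma \ref{two cases}, the classification of maximal coprime factorisations of $\PSL(2,q)$ to list the pairs $(K_1,K_2)$, index/divisibility arithmetic within each subcase (with Lemma \ref{T P_1} handling the $T\times P_1$ situation and weakened Sylow/Hall-type conclusions for $q\in\{7,11,29\}$), so the approaches coincide. One small slip: for $q$ even with $K_2=D_{2(q+1)}$ you assert $H_g=K_2\wr S_2$, which does not follow since $|R_2:H_g|$ is only forced to be coprime to $(q+1)^2$ and so may be a nontrivial power of $2$; the paper (and your own table row) correctly records only $C_{q+1}\times C_{q+1}\leqslant H_g$, so the lemma is unaffected.
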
 

\begin{proof} Suppose that $|H:H_f|$ and $|H:H_g|$ are nontrivial coprime subdegrees of $G(2,q)$. Let $R_1$ and $R_2$ be maximal subgroups of $H$ such that $H_f\leqslant R_1$ and $H_g\leqslant R_2$. According to Lemma \ref{two cases}, we can split our analysis into two cases. 

\textbf{Case 1}: $R_1$ and $R_2$ are of type (3) in Lemma \ref{maximal H}, and are constructed from $K_1$ and $K_2$, respectively. 

Then $|T:K_1|^2$ and $|T:K_2|^2$ are the indices of $R_1$ and $R_2$ so they must be coprime. Thus $|T:K_1|$ and $|T:K_2|$ are coprime, so $T=K_1K_2$ is a maximal coprime factorisation by Lemma 3.16 in \cite{Isaacs}. We will work through the list in \cite{CopSubPraeger}, and assume without loss of generality that $K_1=P_1$. 

\textbf{Subcase 1a}: $q$ is even and $K_2=D_{2(q+1)}$. 

Since $|R_1:H_f|$ is coprime to $|H:R_2|=(\frac{1}{2}q(q-1))^2$ and divides $|R_1|=2(q(q-1))^2$, it follows that $H_f=R_1$. Since $|H:R_1|=(q+1)^2$, we have that $|H:H_g|=|H:R_2||R_2:H_g|=(\frac{1}{2}q(q-1))^2|R_2:H_g|$ is coprime to $(q+1)^2$. Thus $|R_2:H_g|$ is coprime to $(q+1)^2$ so $C_{q+1}\times C_{q+1}\leqslant H_g$ and we are in Row 1 of Table \ref{big}.

\textbf{Subcase 1b}: $q\equiv 3\pmod{4}$ with $q>7$ and $K_2=D_{q+1}$. 

We have $|H|=|R_1||R_2|$, so $H_f=R_1$ and $H_g=R_2$. Thus we are in Row 2. 

\textbf{Subcase 1c}: $q\in\{7,23\}$ and $K_2=S_4$. 

Suppose $q=7$. Since $|R_2:H_g|$ is coprime to $|H:R_1|=8^2$ and divides $|R_2|=2(24^2)$, it follows that $|R_2:H_g|$ is a power of $3$. Thus $H_g=R_2$ or $H_g\cong D_8\wr S_2$. In either case, $|R_1:H_f|$ is coprime to $|H:R_2|=7^2$, so $H_f$ contains the subgroup of $R_1$ isomorphic to $C_7\times C_7$. If $H_g=R_2$ we are in Row 3. Furthermore, if $H_g=D_8\wr S_2$, then since $|R_1:H_f|$ is coprime to $|H:H_g|=21^2$ and divides $|R_1|=2(21^2)$, we must have $|R_1:H_f|=1$ or $2$. Thus $H_f=R_1$ or $H_f=P_1\times P_1$ and we are in Row 3. 

If $q=23$, we have $|H|=|R_1||R_2|$, so $H_f=R_1$ and $H_g=R_2$. Thus we are in Row 6. 

\textbf{Subcase 1d}: $q\in\{11,19,29,59\}$ and $K_2=A_5$.  

If $q=11$, then since $|R_2:H_g|$ is coprime to $|H:R_1|=12^2$ and divides $|R_2|=2(60^2)$, it follows that $|R_2:H_g|$ must be a power of $5$. Thus $H_g=R_2$ or $H_g=A_4\wr S_2$. In either case, $|R_1:H_f|$ is coprime to $|H:R_2|=11^2$, so $H_f$ contains the subgroup of $R_1$ isomorphic to $C_{11}\times C_{11}$. If $H_g=R_2$ we are in Row 4. Furthermore, if $H_g=A_4\wr S_2$, then $|R_1:H_f|$ is coprime to $|H:H_g|=55^2$ and divides $|R_1|=2(55^2)$, so $|R_1:H_f|=1$ or $2$. Thus $H_f=R_1$ or $H_f=P_1\times P_1$ and we are in Row 4.  

Suppose $q=19$. Then $|R_1:H_f|$ divides $|R_1|=2(171^2)$ and is coprime to $|H:R_2|=57^2$, so $|R_1:H_f|=1$ or $2$. This means that $H_f=R_1$ or $H_f=P_1\times P_1$. Also, $|R_2:H_g|$ divides $|R_2|=2(60^2)$ and is coprime to $|H:R_1|=20^2$, so $|R_2:H_g|$ is a power of $3$. This can only happen if $H_g=R_2$, so we are in Row 5. 

If $q=29$, then since $|R_2:H_g|$ is coprime to $|H:R_1|=30^2$ and divides $|R_2|=2(60^2)$, we must have $H_g=R_2$. Then since $|R_1:H_f|$ is coprime to $|H:R_2|=203^2$ and divides $|R_1|=2(406^2)$, it follows that $|R_1:H_f|$ is a power of $2$. This can only happen if $H_f$ contains the subgroup $X\times X$ of $R_1$, where $X$ is the index $2$ subgroup of $P_1$, and we get Row 7. 

If $q=59$, we are again in the situation when $|H|=|R_1||R_2|$, so $H_f=R_1$ and $H_g=R_2$. Thus we are in Row 8. 

\textbf{Case 2}: $R_1=T^2$ and $H_f$ is contained in a subgroup $M$ of type (1) in Lemma \ref{maximal T^2}, and $R_2$ is of type (3) in Lemma \ref{maximal H}.

Suppose that $M$ is constructed from $K_1$ and $R_2$ is constructed from $K_2$. We work through the possibilities for $K_1$ and $K_2$ outlined in Lemma \ref{further details}.

\textbf{Subcase 2a}: $q$ is even, $K_1=D_{2(q+1)}$ and $K_2=P_1$. 

Since $|R_2:H_g|$ divides $|R_2|=2(q(q-1))^2$ and is coprime to $|H:M|=q(q-1)$, it follows that $H_g=R_2$. Since $|H:R_2|=(q+1)^2$, we have that $|H:H_f|=|H:M||M:H_f|$ is coprime to $(q+1)^2$. Thus $|M:H_f|$ is coprime to $(q+1)^2$. Thus we are in Row 1 by interchanging $f$ and $g$. 

\textbf{Subcase 2b}: Either one of Cases 2, 3 or 4 in Lemma \ref{further details} holds. 

Then $K_1=P_1$ and $M=T\times P_1$. Hence $|H:H_f|=|H:M||M:H_f|=2(q+1)|M:H_f|$ is coprime to $|H:H_g|=|H:R_2||R_2:H_g|$, and so $2(q+1)$ is coprime to $|R_2:H_g|$. Now $|R_2:H_g|$ divides $|R_2|=2|K_2|^2$, and we can check that in all cases apart from Case 3 with $q=7$ and Case 4 with $q\in\{11,19\}$, the prime factors of $2|K_2|^2$ are the same as the prime factors of $2(q+1)$. So if we are in Case 2, Case 3 with $q=23$, or Case 4 with $q\in\{29,59\}$, we must have $H_g=R_2$. We claim this is also true in Case 4 with $q=19$. Indeed, $2(q+1)=20$ is coprime to $|R_2:H_g|$ which divides $|R_2|=2(60^2)$, so $|R_2:H_g|$ is a power of $3$, which can only happen if $H_g=R_2$.

Suppose we are in either Case 2, Case 3 with $q=23$, or Case 4 with $q\in\{19,59\}$. Then from the above argument we have $H_g=R_2=X\wr S_2$ and $H_f\leqslant M=T\times P_1$. So $|H:H_g|=|T:X|^2$ is coprime to $|M:H_f|$. We can check that in all these cases the prime factors of $|T:X|$ are the same as the prime factors of $\frac{1}{2}q(q-1)$, so by Lemma \ref{T P_1} we have $H_f=P_1\times P_1$ or $H_f=M$. Thus we are in Rows 2,5,6 and 8. 

Consider Case $4$ with $q=29$. We have that $|M:H_f|$ is coprime to $|H:H_g|=203^2$. Moreover, $|M|=203^2\cdot 120$ so $|M:H_f|$ divides $120$. This can only happen if $H_f$ contains a subgroup of $M$ isomorphic to $X\times X$ where $X$ has index $2$ in $P_1$, yielding Row 7.

Next, consider Case 3 with $q=7$. Then $|R_2:H_g|$ is coprime to $2(q+1)=16$ and divides $|R_2|=2(24^2)$, so $|R_2:H_g|$ is a power of $3$. This means that $H_g=R_2$ or $H_g\cong D_8\wr S_2$. In either case, $|M:H_f|$ is coprime to $|H:R_2|=7^2$, so $H_f$ contains a subgroup of $M$ isomorphic to $C_7\times C_7$. If $H_g=D_8\wr S_2$, then $|M:H_f|$ is coprime to $|H:H_g|=21^2$ and divides $|M|=21^2\cdot 7$, so $|M:H_f|$ is a power of $2$ and $H_f=M$ or $H_f=P_1\times P_1$. Thus we are in Row 3. 

Finally, consider Case 4 with $q=11$. Then $|R_2:H_g|$ is coprime to $2(q+1)=24$ and divides $|R_2|=2(60^2)$, so $|R_2:H_g|$ is a power of $5$. This means that $H_g=R_2$ or $H_g=A_4\wr S_2$. In either case, $|M:H_f|$ is coprime to $|H:R_2|=11^2$, so $H_f$ contains a subgroup of $M$ isomorphic to $C_{11}\times C_{11}$. If $H_g=A_4\wr S_2$, then $|M:H_f|$ is coprime to $|H:H_g|=55^2$ and divides $|M|=55^2\cdot 12$, it follows that $|M:H_f|$ divides $12$. This can only happen if $H_f=M$ or $H_f=P_1\times P_1$ and we are in Row 4. \end{proof} 

Now we determine some specific conditions for $H_f$ to contain certain subgroups of $H$. Recall the action of $H$ on $N=\{f\in T^H\mid f(z\ell)=f(z)^{\phi(\ell)}\quad\forall z\in H, \ell\in L\}$, where $H=(T\times T)\rtimes\langle\iota\rangle$, $L=\{(x,x)\mid x\in T\}\langle\iota\rangle$, and $\phi((x,x)\iota^k)=i_x$ for all $(x,x)\iota^k\in L$ ($i_x$ denotes the automorphism induced on $T$ by conjugation by $x$). 

\begin{lemma}\label{XY} Let $f\in N$ and set $\alpha(t)=f((t,1))$ for all $t\in T$. Let $X$ and $Y$ be subgroups of $T$. Then $X\times Y\leqslant H_f$ if and only if the following conditions hold:
\begin{itemize}
\item $\alpha(xt)=\alpha(t)$ for all $x\in X$, $t\in T$;
\item $\alpha(ty)=\alpha(t)^y$ for all $y\in Y$, $t\in T$.
\end{itemize}
In particular, $\alpha(x)=\alpha(1)$ for all $x\in X$. 
\end{lemma}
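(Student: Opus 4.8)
The plan is to reduce the condition $X\times Y\leqslant H_f$ to a single functional equation for $\alpha$, using the characterisation of $N$ in Lemma \ref{equivalent N} specialised to $m=2$. Recall that $H$ acts on $N$ by $f^h(z)=f(hz)$, so that $X\times Y\leqslant H_f$ means precisely that $f((a,b)z)=f(z)$ for all $a\in X$, $b\in Y$ and $z\in H$.

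First I would rewrite Lemma \ref{equivalent N} for $m=2$ as
\[ f((t_1,t_2)\sigma)=\alpha(t_1t_2^{-1})^{t_2}\qquad\text{for all }t_1,t_2\in T,\ \sigma\in S_2, \]
where $\alpha(t)=f((t,1))$; note that the right-hand side does not depend on $\sigma$. Since $(a,b)\cdot(t_1,t_2)\sigma=(at_1,bt_2)\sigma$ in $H=(T\times T)\rtimes\langle\iota\rangle$, the stabiliser condition becomes $\alpha(at_1t_2^{-1}b^{-1})^{bt_2}=\alpha(t_1t_2^{-1})^{t_2}$ for all $a\in X$, $b\in Y$, $t_1,t_2\in T$. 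Writing $t=t_1t_2^{-1}$, which ranges over all of $T$ independently of $t_2$, and then cancelling the conjugation by the arbitrary element $t_2$, I would reduce this to the equivalent statement
\[ \alpha(atb^{-1})^{b}=\alpha(t)\qquad\text{for all }a\in X,\ b\in Y,\ t\in T, \]
which I shall call $(\star)$.

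It then remains to show that $(\star)$ is equivalent to the two conditions in the statement. For the forward direction, set $b=1$ in $(\star)$ to obtain $\alpha(xt)=\alpha(t)$ for $x\in X$, and set $a=1$ and substitute $t\mapsto ty$ (using that $Y$ is closed under inverses) to obtain $\alpha(ty)=\alpha(t)^{y}$ for $y\in Y$. Conversely, given both conditions, for $a\in X$, $b\in Y$ and $t\in T$ we have $\alpha(atb^{-1})=\alpha(tb^{-1})$ by the first condition and $\alpha(tb^{-1})=\alpha(t)^{b^{-1}}$ by the second applied with $y=b^{-1}\in Y$; conjugating by $b$ gives $(\star)$. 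Finally, the ``in particular'' clause is just the first condition evaluated at $t=1$.

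The argument has no serious obstacle: once the formula from Lemma \ref{equivalent N} is in hand, everything is elementary manipulation. The only points requiring a little care are checking that $(t_1,t_2)\mapsto(t_1t_2^{-1},t_2)$ is a bijection of $T\times T$ (so that no instance of $(\star)$ is lost), that the conjugation by $t_2$ may legitimately be removed because $t_2$ is arbitrary, and that $Y$ being a subgroup, hence closed under inverses, is genuinely used in both directions.
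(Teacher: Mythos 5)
Your proposal is correct and follows essentially the same route as the paper: both use Lemma \ref{equivalent N} to express $f$ through $\alpha$, reduce membership of $X\times Y$ in $H_f$ to the single functional equation $\alpha(xty^{-1})^{y}=\alpha(t)$, and then check equivalence with the two stated conditions by the same specialisations and recombination. The only cosmetic difference is that you cancel the conjugation by the arbitrary $t_2$ explicitly, whereas the paper absorbs this into its chain of equivalences; the substance is identical.
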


\begin{proof} We will use Lemma \ref{equivalent N} here. We have $(x,y)\in H_f$ for all $x\in X$, $y\in Y$ if and only if
\begin{align*} &f^{(x,y)}((a,b)\iota^k)=f((a,b)\iota^k)\quad\forall a,b\in T, \forall x\in X, \forall y\in Y, \forall k\in\{0,1\}\\
&\iff f((xa,yb)\iota^k)=f((a,b)\iota^k)\quad\forall a,b\in T, \forall x\in X, \forall y\in Y, \forall k\in\{0,1\}\\
&\iff f((xab^{-1}y^{-1},1))^{yb}=f((ab^{-1},1))^b\quad\forall a,b\in T, \forall x\in X, \forall y\in Y\\
&\iff \alpha(xty^{-1})^{y}=\alpha(t)\quad\forall t\in T, \forall x\in X, \forall y\in Y. \end{align*}
We claim that this final condition is equivalent to 
\begin{equation}\label{conditions1} \alpha(xt)=\alpha(t) \text{ and } \alpha(t)^y=\alpha(ty)\quad\forall t\in T, x\in X, y\in Y. \end{equation} 
Indeed, if $\alpha(xty^{-1})^{y}=\alpha(t)$ for all $t\in T$, $x\in X$ and $y\in Y$, we can set $y=1$ and $x=1$ respectively to obtain the equations in \eqref{conditions1}. Conversely, if the equations in \eqref{conditions1} hold then for any $t\in T$, $x\in X$ and $y\in Y$ we have
\[ \alpha(t)=\alpha(xt)=\alpha(xty^{-1}y)=\alpha(xty^{-1})^{y}. \]
So $X\times Y\leqslant H_f$ if and only if the equations in \eqref{conditions1} hold. \end{proof} 

\begin{lemma}\label{TW centraliser} Let $K$ be a subgroup of $T$. If a function $\alpha:T\rightarrow T$ satisfies $\alpha(kt)=\alpha(t)$ and $\alpha(tk)=\alpha(t)^k$ for all $k\in K$ and $t\in T$, then $K\cap K^t$ is contained in $\textbf{C}_T(\alpha(t))$ for all $t\in T$. \end{lemma}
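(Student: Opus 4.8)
The statement is an essentially formal consequence of the two functional equations satisfied by $\alpha$, so the plan is to take an arbitrary element of $K\cap K^t$ and evaluate $\alpha$ at a single well-chosen point in two different ways. The one point that must be settled first is the conjugation convention: I read $K^t$ as $t^{-1}Kt$, so that $x\in K^t$ means precisely $txt^{-1}\in K$; with the opposite convention the argument is the mirror image. Everything else is a one-line manipulation, and I do not expect any genuine obstacle.

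\textbf{Key steps.} Fix $t\in T$ and let $x\in K\cap K^t$ be arbitrary. Put $y=txt^{-1}$; since $x\in K^t$ we have $y\in K$. Now compute $\alpha(tx)$ twice. First, because $x\in K$, the hypothesis $\alpha(t'k)=\alpha(t')^k$ (applied with $t'=t$, $k=x$) gives $\alpha(tx)=\alpha(t)^x$. Second, rewrite $tx=(txt^{-1})t=yt$; since $y\in K$, the hypothesis $\alpha(kt')=\alpha(t')$ (applied with $k=y$, $t'=t$) gives $\alpha(tx)=\alpha(yt)=\alpha(t)$. Comparing the two evaluations yields $\alpha(t)^x=\alpha(t)$, i.e. $x\in\mathbf{C}_T(\alpha(t))$. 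As $x$ was an arbitrary element of $K\cap K^t$, this proves $K\cap K^t\leqslant\mathbf{C}_T(\alpha(t))$.

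\textbf{Remarks on difficulty.} There is really no hard part: the whole content is the observation that the element $tx$, for $x\in K\cap K^t$, can be factored both as $t\cdot x$ with $x\in K$ and as $y\cdot t$ with $y=txt^{-1}\in K$, which lets the two equations for $\alpha$ collide. The only thing to be careful about when writing the final proof is to state the conjugation convention explicitly and to apply each functional equation with the correct roles for the ``$K$-variable'' and the ``$T$-variable''. This lemma will presumably be combined with Lemma~\ref{XY} (where $\alpha$ arises from an $f\in N$ fixed by $X\times Y$ with $K=X$) to force $K\cap K^t$ to lie in a proper centraliser, which is the step that later constrains $H_f$.
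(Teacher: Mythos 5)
Your proof is correct and is essentially identical to the paper's: both evaluate $\alpha(tk)$ for $k\in K\cap K^t$ in two ways, once as $\alpha(t)^k$ via the right-multiplication rule and once as $\alpha((tkt^{-1})t)=\alpha(t)$ via the left-multiplication rule, using the same convention $K^t=t^{-1}Kt$. Nothing further is needed.
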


\begin{proof} Take any $k\in K\cap K^t$. Then $tkt^{-1}\in K$, so 
\[ \alpha(t)^k=\alpha(tk)=\alpha((tkt^{-1})t)=\alpha(t). \]
Thus $K\cap K^t\leqslant\textbf{C}_T(\alpha(t))$. \end{proof}

\begin{lemma}\label{PSLcentraliser} Let $T=\text{PSL}(2,q)$ for $q\geqslant 4$. Suppose a function $\alpha:T\rightarrow T$ satisfies $\alpha(pt)=\alpha(t)$ and $\alpha(tp)=\alpha(t)^p$ for all $p\in P_1$ and $t\in T$. Then $\alpha(t)=1$ for all $t\in P_1$. \end{lemma}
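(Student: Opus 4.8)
The plan is to extract all the information we need about $\alpha$ by specialising the two functional equations at $t=1$, and then to reduce the statement to the elementary group-theoretic fact that $\textbf{C}_T(P_1)=1$.

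First I would substitute $t=1$ into the hypotheses. From $\alpha(pt)=\alpha(t)$ with $t=1$ we get $\alpha(p)=\alpha(1)$ for every $p\in P_1$; thus $\alpha$ is constant on $P_1$ with value $\alpha(1)$, so it suffices to prove $\alpha(1)=1$. From $\alpha(tp)=\alpha(t)^p$ with $t=1$ we get $\alpha(p)=\alpha(1)^p$ for every $p\in P_1$. Combining the two equalities gives $\alpha(1)=\alpha(1)^p$ for all $p\in P_1$, i.e. $\alpha(1)\in\textbf{C}_T(P_1)$. (Equivalently one may invoke Lemma \ref{TW centraliser} with $K=P_1$ and any $t\in P_1$: since $P_1$ is maximal in the simple group $T$ it is self-normalising, so $P_1\cap P_1^t=P_1\leqslant\textbf{C}_T(\alpha(t))$, while $\alpha(t)$ itself lies in $\textbf{C}_T(\alpha(t))$.)

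It therefore remains to show $\textbf{C}_T(P_1)=1$ for $T=\PSL(2,q)$ with $q\geqslant 4$. Any element centralising $P_1$ normalises $P_1$, and $N_T(P_1)=P_1$ since $P_1$ is maximal and $T$ is simple; hence $\textbf{C}_T(P_1)\leqslant P_1$ and so $\textbf{C}_T(P_1)\leqslant Z(P_1)$. Finally I would check that $Z(P_1)=1$: the Borel subgroup $P_1$ is a semidirect product $U\rtimes C$, where $U$ is an elementary abelian group of order $q$ (a Sylow $p$-subgroup of $T$, with $q=p^f$) and $C$ is cyclic of order $(q-1)/(2,q-1)$ acting on $U$ by scalar multiplication, this action being fixed-point-free on $U\setminus\{1\}$ precisely when $q\geqslant 4$. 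Since no nontrivial element of $U$ is $C$-fixed, $Z(P_1)\cap U=1$; and since $U$ is abelian while every nontrivial element of $C$ moves some element of $U$, no element of $P_1$ with nontrivial image in $P_1/U\cong C$ centralises $U$, forcing $Z(P_1)\leqslant U$. Hence $Z(P_1)=1$, so $\alpha(1)=1$, and therefore $\alpha(t)=\alpha(1)=1$ for all $t\in P_1$.

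There is essentially no difficult step: the only real content is the identity $\textbf{C}_T(P_1)=1$, which is standard $\mathrm{SL}_2$ structure theory. The one point that requires care is the hypothesis $q\geqslant 4$: for $q=3$ one has $T\cong A_4$ and $P_1\cong C_3$ has nontrivial centre, so the conclusion genuinely fails there, and this boundary is exactly where the action of $C$ on $U$ ceases to be fixed-point-free.
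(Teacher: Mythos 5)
Your proof is correct and takes essentially the same route as the paper: both arguments reduce the lemma to the fact that $\textbf{C}_T(P_1)=1$ (the paper via Lemma \ref{TW centraliser} with $t=1$, you directly by specialising the two functional equations at $t=1$). The only difference is that you supply the verification that $\textbf{C}_T(P_1)\leqslant N_T(P_1)=P_1$ and $Z(P_1)=1$ for $q\geqslant 4$, a step the paper asserts without proof.
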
 

\begin{proof} By Lemma \ref{TW centraliser} we get $P_1\leqslant \textbf{C}_T(\alpha(1))$. This can only happen if $\alpha(1)=1$ and so $\alpha(t)=1$ for all $t\in P_1$. \end{proof} 

\begin{corollary}\label{TP_1} Let $T=\text{PSL}(2,q)$ for $q\geqslant 4$. Then there does not exist $f\in N$ such that $H_f=T\times P_1$. \end{corollary}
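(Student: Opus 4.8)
The plan is to derive a contradiction by assuming $f \in N$ with $H_f = T \times P_1$, using the structural results we have just built up about what it means for $X \times Y$ to lie in $H_f$. First I would set $\alpha(t) = f((t,1))$ for all $t \in T$, as in Lemma \ref{XY}. Applying Lemma \ref{XY} with $X = T$ and $Y = P_1$, the hypothesis $T \times P_1 \leqslant H_f$ forces $\alpha(xt) = \alpha(t)$ for all $x \in T$, $t \in T$; taking $x = t^{-1}$ shows $\alpha$ is the constant function with value $\alpha(1)$. The second bullet of Lemma \ref{XY} then gives $\alpha(1) = \alpha(t)^p = \alpha(1)^p$ for all $p \in P_1$, so $P_1 \leqslant \textbf{C}_T(\alpha(1))$, whence $\alpha(1) = 1$ since no nontrivial element of $T = \text{PSL}(2,q)$ is centralised by the whole of $P_1$ (indeed $\alpha(1) \in Z(T) = 1$, arguing as in Lemma \ref{PSLcentraliser}). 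So $\alpha \equiv 1$, and by Lemma \ref{equivalent N} we conclude $f = \text{id}$.

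But $\text{id} \in N$ is fixed by all of $H$, so $H_{\text{id}} = H \neq T \times P_1$ (the index is $2$, not $1$), which is the required contradiction. Concretely, the argument shows that any $f \in N$ with $T \times P_1 \leqslant H_f$ must equal $\text{id}$, and then $H_f = H$ strictly contains $T \times P_1$; in particular $H_f$ cannot equal $T \times P_1$.

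The proof is essentially a two-line application of Lemma \ref{XY} and either Lemma \ref{PSLcentraliser} or the direct observation $Z(T)=1$, so there is no serious obstacle here; the only point requiring a moment's care is making sure the first bullet of Lemma \ref{XY} (with the full group $T$ in the first coordinate) really does collapse $\alpha$ to a constant — this is immediate by substituting $x = t^{-1}$. One could phrase the whole thing even more directly by invoking Lemma \ref{no T^m}: since $T \times P_1 \leqslant H_f$ would in particular require $H_f$ to be a proper subgroup, it suffices to note that $T^2 \not\leqslant T \times P_1$, so this alternative does not immediately apply, and the cleanest route is the one above via Lemma \ref{XY} and Lemma \ref{PSLcentraliser}.

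\begin{proof} Suppose for contradiction that there exists $f\in N$ with $H_f=T\times P_1$. Set $\alpha(t)=f((t,1))$ for all $t\in T$. By Lemma \ref{XY} applied with $X=T$ and $Y=P_1$, we have $\alpha(xt)=\alpha(t)$ for all $x,t\in T$; taking $x=t^{-1}$ shows that $\alpha$ is the constant function with value $\alpha(1)$. The second bullet of Lemma \ref{XY} then gives $\alpha(1)=\alpha(t)^p=\alpha(1)^p$ for all $p\in P_1$, so $P_1\leqslant\textbf{C}_T(\alpha(1))$; since $P_1$ is not contained in the centraliser of any nontrivial element of $T$, we get $\alpha(1)=1$. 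Hence $\alpha(t)=1$ for all $t\in T$, and by Lemma \ref{equivalent N} we conclude $f=\text{id}$. But then $H_f=H_{\text{id}}=H$, which properly contains $T\times P_1$ since $|H:T\times P_1|=2$, contradicting $H_f=T\times P_1$. \end{proof}
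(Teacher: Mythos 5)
Your proof is correct and follows essentially the same route as the paper's: both set $\alpha(t)=f((t,1))$, apply Lemma \ref{XY} with $X=T$ and $Y=P_1$, deduce that $\alpha$ is constant and that $P_1\leqslant\textbf{C}_T(\alpha(1))$ (the paper packages this last step as Lemma \ref{PSLcentraliser}), and conclude $f=\text{id}$, whence $H_f=H$, a contradiction. The only difference is the order in which constancy and $\alpha(1)=1$ are derived, which is immaterial.
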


\begin{proof} Suppose for a contradiction that such an $f$ exists. Set $\alpha(t)=f((t,1))$ for all $t\in T$. Then by Lemma \ref{XY} we have $\alpha(xt)=\alpha(t)$ and $\alpha(tp)=\alpha(t)^p$ for all $x\in T$, $p\in P_1$ and $t\in T$. We can apply Lemma \ref{PSLcentraliser} to get $\alpha(1)=1$. Then since $\alpha(xt)=\alpha(t)$ for all $x\in T$ and $t\in T$, it follows that $\alpha$ is a constant function, so $\alpha(t)=1$ for all $t\in T$. This is a contradiction as then $H_f=H$. \end{proof}

\begin{lemma}\label{conditions g} Let $f\in N$ and set $\alpha(t)=f((t,1))$ for all $t\in T$. Let $K$ be a maximal subgroup of $T$. Then $H_f=K\wr S_2=(K\times K)\rtimes\langle\iota\rangle$ if and only if the following conditions hold:
\begin{itemize}
\item $\alpha(kt)=\alpha(t)$ for all $k\in K$, $t\in T$;
\item $\alpha(t)=\alpha(t^{-1})^t$ for all $t\in T$;
\item there exists $t\in T$ such that $\alpha(t)\neq 1$. 
\end{itemize}
\end{lemma}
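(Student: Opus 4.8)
The plan is to split the equality $H_f=K\wr S_2$ into the three independent assertions $K\times K\leqslant H_f$, $\iota\in H_f$, and $H_f\neq H$, and then match each with one of the three displayed conditions. Since $K$ is maximal in $T$, Lemma~\ref{maximal H}(3) says $K\wr S_2=(K\times K)\rtimes\langle\iota\rangle$ is a maximal subgroup of $H$; hence once we know $K\times K\leqslant H_f$ and $\iota\in H_f$ we get $K\wr S_2\leqslant H_f$, and maximality forces $H_f=K\wr S_2$ or $H_f=H$. So it will be enough to establish: (i) ``$K\times K\leqslant H_f$ and $\iota\in H_f$'' is equivalent to the first two bulleted conditions; and (ii) ``$H_f\neq H$'' is equivalent to the third.

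For (ii): by Lemma~\ref{equivalent N} the function $f$ is determined by $\alpha$, with $f((t_1,t_2)\sigma)=[\alpha(t_1t_2^{-1})]^{t_2}$, so $f=\text{id}$ exactly when $\alpha\equiv 1$; and if $H_f=H$ then in particular $T^2\leqslant H_f$, so $f=\text{id}$ by Lemma~\ref{no T^m}. Thus $H_f\neq H$ iff $\alpha$ is not identically $1$, which is the third condition.

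For (i): Lemma~\ref{XY} (with $X=Y=K$) already tells us $K\times K\leqslant H_f$ iff $\alpha(kt)=\alpha(t)$ and $\alpha(tk)=\alpha(t)^k$ for all $k\in K$, $t\in T$. For the $\iota$ part I would compute directly from the action $f^x(z)=f(xz)$: since $\iota(a,b)\iota^k=(b,a)\iota^{k+1}$ in $H$, Lemma~\ref{equivalent N} gives $f^\iota((a,b)\iota^k)=[\alpha(ba^{-1})]^a$ and $f((a,b)\iota^k)=[\alpha(ab^{-1})]^b$, so $\iota\in H_f$ iff $[\alpha(ba^{-1})]^a=[\alpha(ab^{-1})]^b$ for all $a,b\in T$. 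Taking $b=1$ yields the second bulleted condition $\alpha(t)=\alpha(t^{-1})^t$; conversely, if that holds, then writing $u=ba^{-1}$ and using $\alpha(u)=\alpha(u^{-1})^u$ recovers the general identity, so the second condition is \emph{equivalent} to $\iota\in H_f$. The one genuine observation is that, in the presence of the second condition, the relation $\alpha(tk)=\alpha(t)^k$ of Lemma~\ref{XY} is automatic from the first: indeed $\alpha(tk)=\alpha(k^{-1}t^{-1})^{tk}=\alpha(t^{-1})^{tk}=(\alpha(t^{-1})^t)^k=\alpha(t)^k$, using the first condition (with $k^{-1}\in K$) for the middle step and the second for the last. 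Hence ``$K\times K\leqslant H_f$ and $\iota\in H_f$'' is equivalent to the first two conditions holding together.

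Combining (i) and (ii) with the maximality of $K\wr S_2$ in $H$ then gives both directions. I expect the only delicate points to be the bookkeeping of the twisting exponents in the $f^\iota$ computation (tracking which of $a,b$ the output gets conjugated by after the swap) and noticing that the condition $\alpha(tk)=\alpha(t)^k$ from Lemma~\ref{XY} need not be listed as a separate hypothesis; everything else is routine substitution using Lemmas~\ref{equivalent N}, \ref{no T^m}, \ref{XY} and \ref{maximal H}.
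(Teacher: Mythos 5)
Your proof is correct and takes essentially the same route as the paper: translate membership in $H_f$ into functional identities for $\alpha$ via Lemma~\ref{equivalent N}, note that the relation $\alpha(tk)=\alpha(t)^k$ from Lemma~\ref{XY} is redundant given the other two conditions, and finish with the maximality of $K\wr S_2$ in $H$ together with Lemma~\ref{no T^m}. The only (harmless, slightly cleaner) difference is that you test the single generator $\iota$ rather than the whole coset $(K\times K)\iota$ as the paper does, which is legitimate since $H_f$ is a subgroup and $\langle K\times K,\iota\rangle=(K\times K)\rtimes\langle\iota\rangle$.
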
 

\begin{proof} By Lemma \ref{XY} we obtain $K\times K\leqslant H_f$ if and only if 
\begin{equation}\label{conditions2} \alpha(kt)=\alpha(t) \text{ and } \alpha(t)^k=\alpha(tk)\quad\forall t\in T, k\in K. \end{equation} 
We now determine an equivalent set of conditions for $(K\times K)\iota\subseteq H_f$ to hold. By Lemma \ref{equivalent N} we obtain $(k_1,k_2)\iota\in H_f$ for all $k_1,k_2\in K$ if and only if
\begin{align*} &f^{(k_1,k_2)\iota}((a,b)\iota^x)=f((a,b)\iota^x)\quad\forall a,b\in T, \forall k_1,k_2\in K, \forall x\in\{0,1\}\\
&\iff f((k_1b,k_2a)\iota^{x+1})=f((a,b)\iota^x)\quad\forall a,b\in T, \forall k_1,k_2\in K, \forall x\in\{0,1\}\\
&\iff f((k_1ba^{-1}k_2^{-1},1))^{k_2}=f((ab^{-1},1))^{ba^{-1}}\quad\forall a,b\in T, \forall k_1,k_2\in K\\
&\iff \alpha(k_1tk_2^{-1})^{k_2}=\alpha(t^{-1})^t\quad\forall t\in T, \forall k_1,k_2\in K. \end{align*}
We claim that this final condition is equivalent to
\begin{equation}\label{conditions3} \alpha(kt)=\alpha(t) \text{ and } \alpha(tk^{-1})^k=\alpha(t^{-1})^t\quad\forall t\in T, k\in K. \end{equation}
Indeed, if $\alpha(k_1tk_2^{-1})^{k_2}=\alpha(t^{-1})^t$ for all $t\in T$ and $k_1,k_2\in K$, we can set $k_1=1$ to obtain the second equation in \eqref{conditions3}. By setting $k_2=1$ and comparing this with the equation where $k_1=k_2=1$, we obtain the first equation in \eqref{conditions3}. Conversely, if the equations in \eqref{conditions3} hold then for any $t\in T$ and $k_1,k_2\in K$ we have
\[ \alpha(k_1tk_2^{-1})^{k_2}=\alpha(tk_2^{-1})^{k_2}=\alpha(t^{-1})^t. \]
So $(K\times K)\iota\subseteq H_f$ if and only if the equations in \eqref{conditions3} hold. 

Putting everything together we get $(K\times K)\rtimes\langle\iota\rangle\leqslant H_f$ if and only if 
\begin{equation}\label{conditions4} \alpha(kt)=\alpha(t), \quad \alpha(t)^k=\alpha(tk) \text{ and } \alpha(tk^{-1})^k=\alpha(t^{-1})^t\quad\forall t\in T, k\in K. \end{equation} 
Next, we show these conditions are equivalent to
\begin{equation}\label{conditions5} \alpha(kt)=\alpha(t)\text{ and } \alpha(t)=\alpha(t^{-1})^t\quad\forall t\in T, k\in K. \end{equation} 
Indeed, \eqref{conditions5} follows from \eqref{conditions4} as the first equation is the same in both and setting $k=1$ in the third equation of \eqref{conditions4} yields the second equation in \eqref{conditions5}. Conversely, suppose \eqref{conditions5} holds. Then
\[ \alpha(tk)=\alpha(k^{-1}t^{-1})^{tk}=\alpha(t^{-1})^{tk}=\alpha(t)^k\quad\forall t\in T, k\in K, \]
proving the second equation of \eqref{conditions4}. Also,
\[ \alpha(tk^{-1})^k=\alpha(kt^{-1})^{(tk^{-1})k}=\alpha(kt^{-1})^t=\alpha(t^{-1})^t\quad\forall t\in T, k\in K, \]
proving the third equation in \eqref{conditions4}. This proves the claim that $(K\times K)\rtimes\langle\iota\rangle\leqslant H_f$ if and only if the equations in \eqref{conditions5} hold. 

By Lemma \ref{maximal H} we have that $(K\times K)\rtimes\langle\iota\rangle$ is maximal in $H$, and so $H_f=(K\times K)\rtimes\langle\iota\rangle$ if and only if $(K\times K)\rtimes\langle\iota\rangle\leqslant H_f$ and $H_f < H$. Now $H_f<H$ if and only if there exists some $h\in H$ such that $f(h)\neq 1$, by Lemma \ref{basic}. By Lemma \ref{equivalent N}, this is equivalent to the existence of $t\in T$ with $\alpha(t)\neq 1$. Thus $H_f=(K\times K)\rtimes\langle\iota\rangle$ if and only if the equations in \eqref{conditions5} hold and there exists $t\in T$ such that $\alpha(t)\neq 1$. \end{proof} 

\begin{lemma}\label{P_1 wr S_2} Let $T=\text{PSL}(2,q)$ where $q\geqslant 4$ and either $q$ is even or $q\equiv 3\pmod{4}$. Then there does not exist $f\in N$ such that $H_f=P_1\wr S_2$. \end{lemma}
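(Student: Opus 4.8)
The plan is to assume such an $f$ exists and to contradict the three conditions of Lemma \ref{conditions g} with $K=P_1$. Write $\alpha(t)=f((t,1))$ for $t\in T$. As in the proof of Lemma \ref{conditions g}, conditions (i) and (ii) there together imply $\alpha(tp)=\alpha(t)^p$ for all $p\in P_1$ and $t\in T$ (rewrite $\alpha(tp)=\alpha(p^{-1}t^{-1})^{tp}$ using (ii), then simplify using (i) and (ii) again), so $\alpha$ satisfies the hypotheses of Lemma \ref{PSLcentraliser} and hence vanishes identically on $P_1$. Since moreover $\alpha(pt)=\alpha(t)$, the function $\alpha$ is constant on each right coset $P_1t$, so it descends to a map $\bar\alpha$ on $\mathcal{P}_1(V)$ under the identification of $P_1t$ with the point $\infty^t$, where $\infty$ is the point fixed by $P_1$. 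Thus $\bar\alpha(\infty)=1$, and the relation $\alpha(tp)=\alpha(t)^p$ becomes the equivariance $\bar\alpha(P^p)=\bar\alpha(P)^p$ for every point $P$ and every $p\in P_1=T_\infty$.

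Next I would invoke Lemma \ref{P_1 involution} to fix an involution $s\notin P_1$, set $P_0=\infty^s$ (so $s$ interchanges $\infty$ and $P_0$), and put $\beta=\alpha(s)=\bar\alpha(P_0)$. Condition (ii) of Lemma \ref{conditions g} at $t=s$ gives $\beta=\alpha(s^{-1})^s=\beta^s$, so $\beta\in\textbf{C}_T(s)$. For $p$ in the two-point stabiliser $C=T_{\infty,P_0}$, which is cyclic of order $d=\frac{q-1}{(2,q-1)}$, we have $P_0^p=P_0$, so equivariance forces $\beta\in\textbf{C}_T(C)$. Finally $\beta\neq1$: otherwise $\bar\alpha$ would vanish on the whole $T_\infty$-orbit of $P_0$ (which is every point except $\infty$) and also at $\infty$, making $\alpha$ identically $1$ and contradicting condition (iii) of Lemma \ref{conditions g}.

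To conclude I would show $\textbf{C}_T(\langle C,s\rangle)=1$. Since $q\geqslant4$, in both our cases ($q$ even, or $q\equiv3\pmod4$) we have $d\geqslant3$, so $C$ contains an element of order greater than $2$; such an element has exactly two fixed points on $\mathcal{P}_1(V)$, namely $\infty$ and $P_0$, so its centraliser is $T_{\infty,P_0}=C$, whence $\textbf{C}_T(C)=C$ and $\beta\in C$. Because $s$ swaps $\infty$ and $P_0$ it normalises $C$, and $\langle C,s\rangle=T_{\{\infty,P_0\}}\cong D_{2d}$ is the (dihedral) normaliser of a split torus, in which $s$ inverts $C$; combined with $\beta\in\textbf{C}_T(s)$ this gives $\beta=\beta^s=\beta^{-1}$, so $\beta^2=1$. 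But $d$ is odd in both cases ($d=q-1$ when $q$ is even, and $d=\tfrac{q-1}{2}$ when $q\equiv3\pmod4$), so the cyclic group $C$ has no involutions and $\beta=1$, a contradiction.

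The setup of the first two paragraphs is routine given the earlier lemmas; the real content, and the step I expect to be the main obstacle, is the centraliser computation $\textbf{C}_T(\langle C,s\rangle)=1$, which rests entirely on the parity of $d$. It is worth noting that the argument genuinely fails for $q\equiv1\pmod4$ (then $d$ is even, $Z(D_{2d})\neq1$, and $P_1\wr S_2$ really does occur as a point stabiliser, consistent with Table \ref{list}), so the care required is to make sure each step uses only structural facts about $\PSL(2,q)$ valid when $q$ is even or $q\equiv3\pmod4$ — self-centralising split tori, the dihedral structure of their normalisers, and the odd order of the relevant torus — and to observe that $q\geqslant4$ already forces $d\geqslant3$, so no separate treatment of small $q$ is needed.
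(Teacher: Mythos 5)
Your proposal is correct and follows essentially the same route as the paper: kill $\alpha$ on $P_1$ via Lemma \ref{PSLcentraliser}, then use an involution $s\notin P_1$ and the dihedral group $\langle P_1\cap P_1^s,s\rangle\cong D_{2d}$ (your $\langle C,s\rangle$) together with the oddness of $d=\frac{q-1}{(2,q-1)}$ to force $\alpha(s)=1$ and hence $\alpha\equiv 1$ — the paper phrases the final step as $\alpha(t)\in Z(D_{2d})=1$ and finishes with Lemma \ref{P_1 involution} applied to every coset, while you unpack the same centraliser computation and finish with $2$-transitivity, a purely cosmetic difference.
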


\begin{proof} Suppose for a contradiction that such an $f$ exists. Define $\alpha:T\rightarrow T$ by $\alpha(t)=f((t,1))$ for all $t\in T$. Then since $P_1\times P_1\leqslant H_f$, Lemma \ref{XY} implies that $\alpha(pt)=\alpha(t)$ and $\alpha(tp)=\alpha(t)^p$ for all $p\in P_1$ and $t\in T$. By Lemma \ref{PSLcentraliser} we conclude that $\alpha(t)=1$ for all $t\in P_1$. 

Now let $t$ be an involution in $T$ that is not contained in $P_1$. By Lemma \ref{conditions g} we have $\alpha(t)^t=\alpha(t^{-1})^t=\alpha(t)$, so $t\in\textbf{C}_T(\alpha(t))$. So Lemma \ref{TW centraliser} implies that $P_1\cap P_1^t < \langle P_1\cap P_1^t, t\rangle\leqslant \textbf{C}_T(\alpha(t))$. Now $X=\langle P_1\cap P_1^t, t\rangle \cong D_{\frac{2(q-1)}{(2,q-1)}}$, which we can see by observing that $t$ normalises $P_1\cap P_1^t\cong C_{\frac{q-1}{(2,q-1)}}$, and consulting the listed subgroups of $\text{PSL}(2,q)$ in \cite{PSLMoore}. Hence if $X\leqslant\textbf{C}_T(\alpha(t))$ then $\alpha(t)\in Z(X)=1$ as $q\not\equiv 1\pmod{4}$. By Lemma \ref{P_1 involution} it follows that $\alpha(t)=1$ for all $t\notin P_1$, as $\alpha$ is constant on the right cosets of $P_1$ in $T$. Since we previously showed that $\alpha(t)=1$ for all $t\in P_1$, we conclude that $\alpha(t)=1$ for all $t\in T$. But then $H_f=H$, a contradiction, so no such $f$ exists. \end{proof}  

\begin{lemma}\label{P_1P_1 iff} Let $T=\text{PSL}(2,q)$ where $q\geqslant 4$ and either $q$ is even or $q\equiv 3\pmod{4}$. Let $f\in N\backslash\{\text{id}\}$. Then whenever $P_1\times P_1\leqslant H_f$, we have $P_1\times P_1=H_f$. \end{lemma}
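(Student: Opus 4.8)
The plan is to reduce the problem to a short list of candidates for $H_f$ by controlling the maximal subgroups of $H$ lying above $P_1\times P_1$, and then to eliminate every candidate except $P_1\times P_1$ using results already established. Since $f\neq\text{id}$, Lemma \ref{basic} gives $H_f\neq H$, so $H_f$ is contained in some maximal subgroup $M$ of $H$, and then $P_1\times P_1\leqslant M$.

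First I would show that the only maximal subgroups of $H$ containing $P_1\times P_1$ are $T\times T$ and $P_1\wr S_2$. Using the classification in Lemma \ref{maximal H}: the type (1) subgroup $T\times T$ is normal in $H$, hence equal to all of its conjugates, and it contains $P_1\times P_1$. For any conjugate $M$ of a type (2) subgroup, $M\cap(T\times T)$ is a twisted diagonal $\{(t,t^\sigma)\mid t\in T\}$ (conjugation in $H$ carries twisted diagonals to twisted diagonals), so $M\cap(T\times T)\cap(T\times 1)=1$; but $P_1\times 1\leqslant P_1\times P_1$ would have to lie in this intersection, which is impossible since $P_1\neq 1$. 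For a conjugate $M$ of a type (3) subgroup $K\wr S_2$, the subgroup $M\cap(T\times T)$ is $T$-conjugate to $K\times K$, hence of the form $K^c\times K^d$; if $P_1\times P_1\leqslant M$ then $P_1\leqslant K^c$ and $P_1\leqslant K^d$, and maximality of $P_1$ in $T$ forces $K^c=K^d=P_1$, so that $M\cap(T\times T)=P_1\times P_1$. Since $M\cap(T\times T)\vartriangleleft M$, this gives $M\leqslant \textbf{N}_H(P_1\times P_1)$, and a direct computation (using $\textbf{N}_T(P_1)=P_1$ and that $\iota$ swaps the two copies of $P_1$) shows $\textbf{N}_H(P_1\times P_1)=P_1\wr S_2$; as $M$ is maximal, $M=P_1\wr S_2$.

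Next I would split according to which of these two subgroups contains $H_f$. If $M=P_1\wr S_2$, then $|M:P_1\times P_1|=2$ forces $H_f=P_1\times P_1$ or $H_f=P_1\wr S_2$, and the second is excluded by Lemma \ref{P_1 wr S_2} under the hypothesis on $q$. If $M=T\times T$, I would enumerate the subgroups $X$ with $P_1\times P_1\leqslant X\leqslant T\times T$ by a Goursat-type argument: each projection $\pi_i(X)$ contains $P_1$, hence, by maximality of $P_1$, equals $P_1$ or $T$; if both equal $P_1$ then $X=P_1\times P_1$; if exactly one equals $T$ then, using that $X$ contains $1\times P_1$ (respectively $P_1\times 1$) and that $T$ is simple, one checks that $X=T\times P_1$ or $X=P_1\times T$; if both equal $T$ then $X$ is subdirect in $T\times T$, hence $T\times T$ or a twisted diagonal, and the diagonal is impossible since it meets $T\times 1$ trivially while $P_1\times 1\leqslant X$. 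So $X\in\{P_1\times P_1,\ P_1\times T,\ T\times P_1,\ T\times T\}$. Finally, $H_f=T\times T$ is impossible by Lemma \ref{no T^m} (as $f\neq\text{id}$), $H_f=T\times P_1$ is impossible by Corollary \ref{TP_1}, and $H_f=P_1\times T$ is impossible by applying Corollary \ref{TP_1} to $f^\iota\in N\backslash\{\text{id}\}$, since $(P_1\times T)^\iota=T\times P_1$. Hence $H_f=P_1\times P_1$ in this case as well.

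I expect the main obstacle to be the bookkeeping in the second paragraph --- ruling out \emph{all} conjugates of the type (2) and type (3) maximal subgroups of $H$, not merely the representatives listed in Lemma \ref{maximal H} --- together with the routine but slightly fiddly Goursat enumeration of the intermediate subgroups of $T\times T$. Once these structural facts are in place, the conclusion drops out of Lemmas \ref{no T^m} and \ref{P_1 wr S_2} and Corollary \ref{TP_1}.
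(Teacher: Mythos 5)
Your proposal is correct and follows essentially the same route as the paper: both reduce to the fact that the only proper subgroups of $H$ containing $P_1\times P_1$ are $T^2$, $P_1\times P_1$, $P_1\wr S_2$, $T\times P_1$ and $P_1\times T$, and then eliminate all but $P_1\times P_1$ via Lemma \ref{no T^m}, Lemma \ref{P_1 wr S_2} and Corollary \ref{TP_1} (using conjugation by $\iota$ for $P_1\times T$). The only difference is that you supply the verification of that subgroup list (via Lemma \ref{maximal H} and a Goursat argument), which the paper simply asserts.
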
 

\begin{proof} The only proper subgroups of $H$ containing $P_1\times P_1$ are $T^2, P_1\times P_1, P_1\wr S_2, T\times P_1$ and $P_1\times T$. Recall that the last three have been disproved in Lemma \ref{P_1 wr S_2} and Corollary \ref{TP_1} by noting that $T\times P_1$ and $P_1\times T$ are conjugate in $H$. Finally, we cannot have $H_f=T^2$ by Lemma \ref{no T^m}. \end{proof}

\begin{remark}\label{must have P_1P_1} We can now say more about the exact value of the subdegree in Corollary \ref{P_1P_1 not maximal}. If $q$ is even or $q\equiv 3\pmod{4}$ then we claim the subdegree equals $2(q+1)^2$. Indeed, the proof of Corollary \ref{P_1P_1 not maximal} implies there exists $f\in N\backslash\{\text{id}\}$ such that $P_1\times P_1\leqslant H_f$. So by Lemma \ref{P_1P_1 iff} we must have $P_1\times P_1=H_f$, so the subdegree is $|H:H_f|=2(q+1)^2$. \end{remark}

\begin{theorem}\label{subdegree classification} The group $G(2,q)$ has nontrivial coprime subdegrees if and only if $q\equiv 3\pmod{4}$ or $q=29$. Furthermore, $|H:H_f|$ and $|H:H_g|$ are nontrivial coprime subdegrees if and only if, up to reordering, one of the possibilities in Table \ref{final list} holds. Moreover, in each of these cases there exist $f,g\in N$ such that $H_f$ and $H_g$ are as in the table below. 

\begin{table}
\caption {Classification of nontrivial coprime subdegrees of $G(2,q)$} \label{final list}
\begin{center}
\begin{tabular}{ |c|c|c|c|c| } 
 \hline
 $q$ & $H_f$ & $H_g$ & $|H:H_f|$ & $|H:H_g|$\\ 
 \hline
$\equiv 3\pmod{4}$& $P_1\times P_1$& $D_{q+1}\wr S_2$& $2(q+1)^2$& $(\frac{1}{2}q(q-1))^2$ \\
$7$& $C_7\times C_7\leqslant H_f$& $S_4\wr S_2$& \textrm{divides} $2(24^2)$ & $7^2$ \\
$11$& $C_{11}\times C_{11}\leqslant H_f$& $A_5\wr S_2$& \textrm{divides} $2(60^2)$ & $11^2$ \\
$11$& $P_1\times P_1$& $A_4\wr S_2$& $2(12^2)$ & $55^2$ \\
$19$& $P_1\times P_1$& $A_5\wr S_2$& $2(20^2)$& $57^2$ \\
$23$& $P_1\times P_1$& $S_4\wr S_2$& $2(24^2)$& $253^2$ \\
$29$& $X\times X\leqslant H_f$ where $|P_1:X|=2$& $A_5\wr S_2$& \textrm{divides} $2(60^2)$ & $203^2$ \\
$59$& $P_1\times P_1$& $A_5\wr S_2$& $2(60^2)$& $1711^2$ \\
 \hline
\end{tabular}
\end{center}
\end{table}
\end{theorem}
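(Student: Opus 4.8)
The plan is to get from the candidate list of Lemma \ref{nine cases} to Table \ref{final list} by pruning, then to check the indices, and finally to build the functions realising each row; I split the proof into the ``only if'' direction together with the identification of the possible pairs $(H_f,H_g)$, the arithmetic, and the existence of $f$ and $g$.

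First suppose $|H:H_f|$ and $|H:H_g|$ are nontrivial coprime subdegrees of $G(2,q)$. By Lemma \ref{nine cases}, after reordering $(H_f,H_g)$ occurs in Table \ref{big}. Every value of $q$ there is even or $\equiv 3\pmod 4$ or equal to $29$, and in the $q=29$ row the displayed $H_f$ contains $X\times X$ rather than being $P_1\wr S_2$, $T\times P_1$ or $P_1\times P_1$. So in each row I would delete the option $H_f=P_1\wr S_2$ (impossible by Lemma \ref{P_1 wr S_2}, applicable since $q$ is even or $\equiv 3\pmod4$ in those rows) and $H_f=T\times P_1$ (impossible by Corollary \ref{TP_1}), and replace $P_1\times P_1\leqslant H_f$ by the equality $H_f=P_1\times P_1$ via Lemma \ref{P_1P_1 iff}. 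The two even-$q$ rows both demand $H_f=P_1\wr S_2$ and hence vanish, so $G(2,q)$ has no such pair when $q$ is even; Table \ref{big} has no row at all when $q\equiv 1\pmod4$ and $q\neq 29$, so none there either; and the surviving rows are precisely those of Table \ref{final list}. This proves the ``only if'' half of the first two sentences.

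It remains to check the indices. Using $|H|=2|T|^2$, $|K\wr S_2|=2|K|^2$, and the orders of $P_1,D_{q+1},S_4,A_5,A_4$ and of the index-$2$ subgroup $X\leqslant P_1$, one gets e.g.\ $|H:P_1\times P_1|=2(q+1)^2$ and $|H:D_{q+1}\wr S_2|=\bigl(\tfrac12q(q-1)\bigr)^2$ in Row~1, which are coprime because $q\equiv 3\pmod4$ forces $\tfrac12q(q-1)$ odd and coprime to $q+1$; the other rows are analogous, the cases $q\in\{7,11,29\}$ giving only ``$|H:H_f|$ divides $2(24^2)$'' (resp.\ $2(60^2)$) since there $H_f$ is only pinned to contain $C_p\times C_p$ or $X\times X$, and every such divisor is automatically coprime to $|H:H_g|\in\{7^2,11^2,203^2\}$. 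For the converse I would exhibit $f,g$ row by row: $P_1\times P_1$ arises as an $H_f$ by Corollary \ref{P_1P_1 not maximal} with Remark \ref{must have P_1P_1} ($q\equiv3\pmod4$); $D_{q+1}\wr S_2$ as an $H_g$ from Lemma \ref{centraliser} applied to an involution $\gamma$, using $\textbf{C}_T(\gamma)=D_{q+1}$ for $q\equiv3\pmod4$ (the function produced has stabiliser exactly $\textbf{C}_T(\gamma)\wr S_2$); $S_4\wr S_2$ ($q=7,23$) and $A_5\wr S_2$ ($q=19,29,59$) by Corollary \ref{C_2 int} via Lemmas \ref{S_4} and \ref{C_2}; and a nonconstant $f$ with $C_7\times C_7\leqslant H_f$ ($q=7$), $C_{11}\times C_{11}\leqslant H_f$ ($q=11$) or $X\times X\leqslant H_f$ ($q=29$) from Lemma \ref{D not maximal} applied to $D=C_p\times C_p$ (with $t=1$, so $D\cap L$ is a diagonal copy of $C_p$, abelian and nontrivial) or to $D=X\times X$ (with $t=(1,s)$ where $s$ is chosen so that $X\cap X^s\cong C_7$, possible because a torus of order $7$ of $\PSL(2,29)$ lies in two distinct Borel subgroups). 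The genuinely new cases are $A_5\wr S_2$ and $A_4\wr S_2$ at $q=11$, where Lemma \ref{C_2} needs $q>11$ and $A_4$ is not maximal; these I would verify by a \textsc{Magma} computation, noting that any nonconstant $g$ with $A_4\wr S_2\leqslant H_g$ has $H_g\in\{A_4\wr S_2,A_5\wr S_2\}$ since $A_5\wr S_2$ is the only subgroup of $H$ strictly between $A_4\wr S_2$ and $H$.

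I expect the main obstacle to be exactly these residual small-$q$ realisations — exhibiting $A_4\wr S_2$ and $A_5\wr S_2$ as point stabilisers when $q=11$, and choosing the conjugating element $s$ correctly at $q=29$ — since everything for generic $q\equiv 3\pmod4$ is a routine assembly of the lemmas already proved.
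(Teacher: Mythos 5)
Your proposal is correct and takes essentially the same route as the paper: prune Table \ref{big} with Lemmas \ref{P_1 wr S_2}, \ref{TP_1} and \ref{P_1P_1 iff}, observe that the even-$q$ rows vanish, check coprimality of the indices, and realise each surviving row by the constructions of Sections \ref{TW} and \ref{T=PSL} (with \textsc{Magma} for the residual $q=11$ cases, as the paper also does). The one slip is the realisation of $H_g=S_4\wr S_2$ at $q=7$: Lemma \ref{S_4} only guarantees $K\cap K^t\cong C_2^2$ there (the $C_2$ conclusion needs $q\geqslant 17$), so Corollary \ref{C_2 int} does not apply as you claim; the paper covers this case by citing \cite{CopSubPraeger} directly (alternatively one can feed the $C_2^2$ intersection into Corollary \ref{max}, since $D^t\cap L$ is then a $2$-group whose centre meets $D^t\cap M$ nontrivially).
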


\begin{proof} We refer to Table \ref{big}, and use Lemmas \ref{P_1 wr S_2} and \ref{TP_1} to eliminate all the cases where $H_f= P_1\wr S_2$ or $H_f=T\times P_1$. This leaves us with the cases in Table \ref{final list}, and it is easy to check that in each of these cases we obtain nontrivial coprime subdegrees. Thus if $G(2,q)$ has nontrivial coprime subdegrees, then $q\equiv 3\pmod{4}$ or $q=29$, and Theorem \ref{infinite families} shows that for all these values of $q$, the group $G(2,q)$ has nontrivial coprime subdegrees. So the first part of this theorem has been proven. 

To see that in each of these cases there exist $f,g\in N$ such that $H_f$ and $H_g$ are as shown in Table \ref{final list}, we refer to Remark \ref{must have P_1P_1} and Sections \ref{TW} and \ref{T=PSL}. \end{proof}

\begin{remark} Suppose $q\equiv 3\pmod{4}$. Then $G(2,q)$ has the pair of nontrivial coprime subdegrees $(2(q+1)^2, (\frac{1}{2}q(q-1))^2)$. If $q\neq\{7,11,19\}$ this is the only such pair. If $q\in\{7,11\}$ there are at least two such pairs, and if $q=19$ there are exactly two such pairs, with the other pair being $(2(20)^2, 57^2)$.  \end{remark}

\end{document}